\newtheorem{theorem}{Theorem}
\newtheorem{lemma}[theorem]{Lemma}
\newtheorem{proposition}[theorem]{Proposition}
\newtheorem{corollary}[theorem]{Corollary}
\theoremstyle{definition}
\newtheorem{definition}{Definition}[section]
\theoremstyle{remark}
\newtheorem{remark}{Remark}[section]
\title[Monoid of Long Virtual Knots]{Prime Decomposition and Non-Commutativity\\ in the Monoid of Long Virtual Knots}
\author{Micah W. Chrisman}
\begin{document}
\begin{abstract} It is well-known that the monoid of long virtual knots is not commutative. This contrasts with the case of classical long knots, where $A \# B \leftrightharpoons B \# A$ for all $A,B$. In the present paper, we present a new proof that two inequivalent non-classical prime long virtual knots never commute. The original result is due to Manturov. The techniques used here are mostly geometric. First, a slightly strengthened version of Kuperberg's theorem is established. We then show that a well-defined concatenation of two long knots in a thickened surface is preserved by stabilization when both long knots are non-classical. Finally, it is proved that if $A,B,C,D$ are prime non-classical long virtual knots such that $A \# B$ is non-classical and $A \# B \leftrightharpoons C \# D$, then $A \leftrightharpoons C$ and $B \leftrightharpoons D$.
\end{abstract}
\keywords{long virtual knots, Kuperberg's Theorem, prime decomposition}
\subjclass[2000]{57M25,57M27}
\maketitle
\section{Introduction}
\subsection{Motivation} A well-known fact from classical knot theory is that if $A$ and $B$ are long knots, then $A \# B \leftrightharpoons B \# A$. Here the operation $\#$ denotes concatenation and $\leftrightharpoons$ denotes Reidemeister equivalence. In virtual knot theory \cite{KaV}, the operation is not commutative. The first counterexample is due Manturov \cite{long}. Since then, other authors have been able to detect this phenomenon with other invariants \cite{quatlong,allsamall}. When $A$ and $B$ are distinct irreducibly odd free long knots, non-commutativity of $A$ and $B$ follows from a purely combinatorial argument (for definitions, see \cite{Ma2}).
\newline
\newline
If $A$ is a long classical knot and $B$ is any long virtual knot, then it is true that $A \# B \leftrightharpoons B \# A$ \cite{manturov_compact_long}.  This follows from the fact that $A \# B$ may be lifted to a knot in a thickened surface $\Sigma \times I$.  The classical knot can then be ``pushed'' along the knot until it goes from one side of $B$ to the other. As this is $B \# A$, we see that every long classical knot commutes with every long virtual knot. This is depicted at the level of long virtual knot diagrams in Figure \ref{class_virt_comm}.

\begin{figure}[htb]
\[
\begin{array}{ccccc}
\begin{array}{c}\scalebox{.6}{\psfig{figure=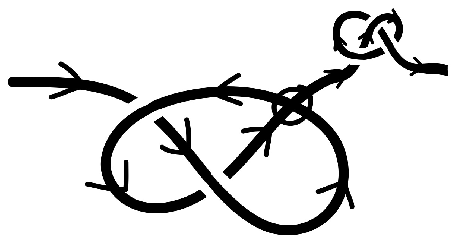}}\end{array} & \leftrightharpoons & \begin{array}{c}\scalebox{.6}{\psfig{figure=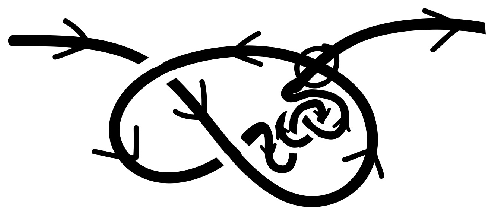}} \end{array} & \leftrightharpoons & \begin{array}{c}\scalebox{.6}{\psfig{figure=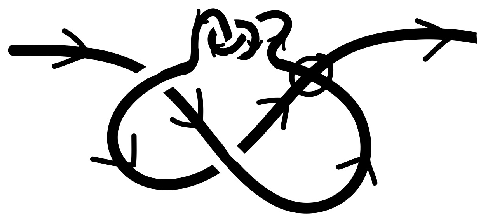}} \end{array} \\
\begin{array}{c}\scalebox{.6}{\psfig{figure=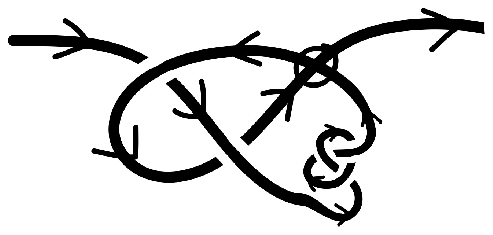}} \end{array} & \leftrightharpoons & \begin{array}{c}\scalebox{.6}{\psfig{figure=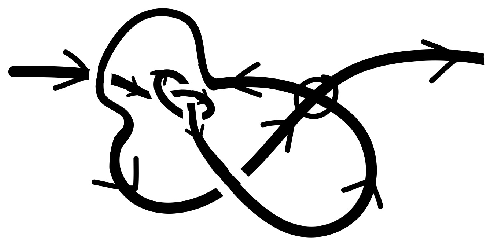}} \end{array} & \leftrightharpoons  & \begin{array}{c}\scalebox{.6}{\psfig{figure=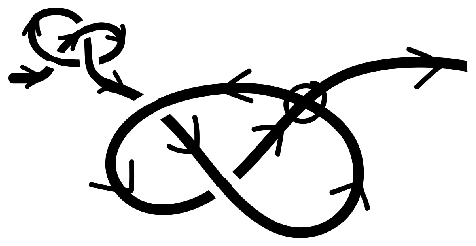}} \end{array} \\ 
\end{array}
\]
\caption{Long classical knots commute with all long virtual knots.} \label{class_virt_comm}
\end{figure}
\hspace{1cm}
\newline
Therefore, there are some cases for which the product of long virtual knots is commutative and some for which it is not.  We will investigate a particular case in which long virtual knots never commute.

\subsection{Preliminary Definitions} The investigation will be carried out with techniques of geometric topology in the piecewise linear category. As seen in the example that long virtual knots and long classical knots always commute, we will look at long virtual knots both as diagrams in $\mathbb{R}^2$ and as long knots in thickened surfaces. Before stating the main results, we present the corresponding definitions of each interpretation of long virtual knots.

\subsubsection{Long Virtual Knots: Interpretation as Diagrams in $\mathbb{R}^2$} 

\begin{definition}[Long Virtual Knot Diagrams, Long Virtual Knots] A \emph{long virtual knot diagram} is an immersion $K:\mathbb{R} \to \mathbb{R}^2$ whose image coincides with the $x$-axis outside some closed ball $B(0,R)$ and such that each double point is represented as a classical crossing or virtual crossing. All long virtual knot diagrams are assumed to be oriented from $-\infty$ to $\infty$. A \emph{long virtual knot} is an equivalence class of long virtual knot diagrams modulo the Reidemeister moves and the detour move \cite{KaV}. If $K_1,K_2$ are in the same equivalence class for this relation, we write $K_1 \leftrightharpoons K_2$. The set of equivalence classes of long virtual knots is denoted by $\mathscr{K}$.
\end{definition}

\begin{definition}[Long Classical Knot, Long Trivial Knot] A long virtual knot diagram having no virtual crossings is said to be a \emph{long classical knot diagram}. If a long virtual knot diagram is in the same equivalence class as a long classical knot diagram, then it is said to be \emph{classical}. If there is no long classical knot diagram to which $K$ is equivalent, the $K$ is $\emph{non-classical}$. The \emph{long trivial knot} is the long knot $\mathbb{R} \to \mathbb{R}^2$ defined by $t \to (t,0)$. We will often denote the trivial long knot by $\to$. 
\end{definition}

\begin{definition}[Concatenation of Diagrams] \label{defn_concat_virtual} Let $\mathbb{H}_L:=\{(x,y)\in \mathbb{R}^2:x<0\}$ and $\mathbb{H}_R:=\{(x,y) \in \mathbb{R}^2:x>0\}$. Let $f_L:\mathbb{R}^2 \to \mathbb{H}_L$ be the function $f_L(x,y)=(-e^{-x},y)$ and $f_R:\mathbb{R}^2 \to \mathbb{H}_R$ be the function $f_R(x,y)=(e^x,y)$. Let $K_1$, $K_2$ be long virtual knot diagrams.  The \emph{concatenation} of $K_1$ and $K_2$ is the long virtual knot diagram $K_1 \# K_2$ defined by: 
\[
K_1 \# K_2(t)=\left\{\begin{array}{cc} f_L \circ K_1(-\ln(-t)), & \text{if } t<0 \\ 0, & \text{if } t=0 \\ f_R \circ K_2(\ln(t)), & \text{if } t>0 \end{array} \right.
\]
where the double points of $K_1 \# K_2$ in $\mathbb{H}_L$ are given the classical crossing and virtual crossing structure of the corresponding double points of $K_1$ and the double points of $K_1\#K_2$ in $\mathbb{H}_R$ are given the classical crossing and virtual crossing structure of the corresponding double points of $K_2$.
\end{definition}

\begin{definition}[Linearly Prime] A long virtual knot $K$ is said to be \emph{linearly prime} (see also \cite{KaV}) if $K$ is not the long trivial knot and whenever $K \leftrightharpoons K_1 \# K_2$, then either $K_1 \leftrightharpoons \to$ or $K_2 \leftrightharpoons \to$.
\end{definition} 

\subsubsection{Long Virtual Knots: Thickened Surface Interpretation} Let $I=[0,1]$. Let $\Sigma$ be a connected compact oriented surface with either one or two distinguished boundary components. Let $\mathscr{S}$ denote the set of such surfaces. Let $c(\Sigma)$ be the number of boundary components of $\Sigma$ and $num(\Sigma)$ the number of distinguished boundary components. The \emph{genus} of $\Sigma$, denoted $g(\Sigma)$ is defined by the equation $\chi(\Sigma)=2-2g(\Sigma)-c(\Sigma)$, where $\chi(\Sigma)$ denotes the Euler characteristic. 

\begin{definition}[Long Knot in $\Sigma \times I$] Let $\Sigma \in \mathscr{S}$ and let $\mathscr{C}(\Sigma)$ be the set of annuli $C \times I \subset \Sigma \times I$ where $C$ is a distinguished boundary component of $\Sigma$. A \emph{long knot} in $\Sigma \times I$ is a (p.l.) embedding $\tau:I \to \Sigma \times I$ satisfying the following properties:
\begin{enumerate}
\item $\text{im}(\tau) \cap \partial (\Sigma \times I)=\{\tau(0),\tau(1)\}$,
\item $\tau(0),\tau(1) \in \text{int}(C \times I)$, $\tau(0) \ne \tau(1)$, if $\emph{num}(\Sigma)=1$ and $\mathscr{C}(\Sigma)=\{C \times I\}$,  
\item $\tau(0) \in \text{int}(C_1 \times I)$, $\tau(1) \in \text{int}(C_2 \times I)$, if $\emph{num}(\Sigma)=2$ and $\mathscr{C}(\Sigma)=\{C_1 \times I,C_2 \times I\}$. 
\end{enumerate}
A long knot $\tau$ in $\Sigma \times I$ will be denoted by the pair $(\Sigma,\tau)$. 
\end{definition}

\begin{definition}[Equivalence of Long Knots in $\Sigma \times I$] Long knots $\tau_1,\tau_2:I \to \Sigma \times I$ are said to be \emph{equivalent} if:
\begin{enumerate} 
\item there is an orientation preserving homeomorphism $h:\Sigma \to \Sigma$ mapping each boundary component to itself and satisfying $(h \times \text{id}) \circ \tau_1=\tau_2$, or
\item there is a (p.l.) ambient isotopy $H:(\Sigma \times I)\times I \to \Sigma \times I$ such that for all $t$ and all components $C$ of $\partial \Sigma$, $H_t(C\times I)=C \times I$ and $H_t|_{\Sigma \times \partial I}=\text{id}_{\Sigma \times \partial I}$, or
\item there is a finite sequence of equivalences as in (1) and (2).
\end{enumerate}
\end{definition}

\begin{definition}[Vertically Proper] A (p.l) embedding of a disc $F:I \times I \to \Sigma \times I$ is said to be \emph{vertically proper} if $F(I \times \{0\}) \subset  \Sigma \times \{0\}$, $F(I \times \{1\}) \subset \Sigma \times \{1\}$, $F(\{0\} \times  I) \subset (\partial \Sigma) \times I$, and $F(\{1\} \times I) \subset (\partial \Sigma) \times I$. A (p.l) embedding of an annulus $F:S^1 \times I \to \Sigma \times I$ is said to be \emph{vertically proper} if $F(S^1 \times \{0\}) \subset \Sigma \times \{0\}$ and $F(S^1 \times \{1\}) \subset \Sigma \times \{1\}$. 
\end{definition}

\begin{definition}[Destabilization, Stabilization] A \emph{ destabilization} \cite{kuperberg} of $(\Sigma,\tau)$ is cutting $\Sigma \times I$ along a vertically proper disc or annulus $W$ such that $\text{im}(\tau) \cap W=\emptyset$ and discarding any resulting connected components which do not contain $\text{im}(\tau)$. The inverse operation of a destabilization is a \emph{stabilization}. After destabilization, we obtain a new long knot $(\Sigma',\tau')$ such that $\Sigma'$ is homeomorphic to the surface obtained by cutting $\Sigma \times \{1\}$ along $W \cap (\Sigma \times \{1\})$. We will say that a destabilization is \emph{inessential} if it cuts off a $3$-ball from $\Sigma \times I$. The inverse operation is called an \emph{inessential stabilization}. 
\end{definition}

\begin{remark} After an annular destabilization, it is customary to cap the resulting manifold by gluing thickened discs $D^2 \times I$ along $(\partial D^2) \times I$ \cite{kuperberg}. Here we instead consider capping as a distinct operation. Indeed, it is an inessential stabilization.
\end{remark}

\begin{remark} An annular destabilization does not change the number of distinguished boundary components. However, a destabilization on a disc can increase, decrease, or preserve the number of distinguished boundary components.
\end{remark}

\begin{definition}[Descendant] If $(\Sigma',\tau')$ can be obtained from $(\Sigma,\tau)$ by a sequence of destablizations, inessesential stabilizations, o.p. homeomorphisms of surfaces, and equivalences of long knots in thickened surfaces, then we say that $(\Sigma',\tau')$ is a \emph{descendant} of $(\Sigma,\tau)$ (i.e. there are no essential stabilizations allowed).
\end{definition}

\begin{definition}[Genus Reducing, Num reducing] A destabilization of $(\Sigma,\tau)$ to $(\Sigma',\tau')$ is said to be \emph{genus reducing} if $g(\Sigma')<g(\Sigma)$. The destabilization $(\Sigma,\tau)$ to $(\Sigma',\tau')$ is said to be \emph{num reducing} if $num(\Sigma')<num(\Sigma)$. 
\end{definition}

\begin{definition}[Irreducible Long Knot in $\Sigma \times I$] We say that $(\Sigma,\tau)$ is \emph{irreducible} if every destabilization of $(\Sigma,\tau)$ is not genus reducing and not num reducing.
\end{definition}

\begin{definition}[Stable Equivalence, $\mathscr{K}(\mathscr{S})$] Two long knots in thickened surfaces $(\Sigma_1,\tau_1)$, $(\Sigma_2,\tau_2)$ are said to be \emph{stably equivalent} if they may be obtained from one another by a finite sequence of equivalences of long knots in thickened surfaces, stabilizations, destabilizations, and maps of the form $h \times \text{id}:\Sigma \times I \to \Sigma' \times I$ where $h$ is an orientation preserving homeomorphism $h:\Sigma\to \Sigma'$ for $\Sigma,\Sigma' \in \mathscr{S}$. If $(\Sigma_1,\tau_1)$, $(\Sigma_2,\tau_2)$ are stably equivalent, we write $(\Sigma_1,\tau_1)\sim_s(\Sigma_2,\tau_2)$. The set of stable equivalence classes of long knots is denoted by $\mathscr{K}(\mathscr{S})$.
\end{definition}

\begin{definition}[Concatentation] Let $(\Sigma,\tau)$, $(\Sigma_1,\tau_1)$, $(\Sigma_2,\tau_2)$ be long knots in thickened surfaces. We write $(\Sigma,\tau)=(\Sigma_1\#\Sigma_2,\tau_1\#\tau_2)$ if there is a vertically proper disc $R$ in $\Sigma \times I$ such that the following conditions are satisfied:
\begin{enumerate}
\item there are orientation preserving embeddings $i_1: \Sigma_1 \times I \to \Sigma \times I$ and $i_2:\Sigma_2 \times I \to \Sigma \times I$ such that $\text{im}(i_1)\cup\text{im}(i_2)=\Sigma \times I$, $\text{im}(i_1)\cap\text{im}(i_2)=R$, $i_j(\Sigma_j \times \{s\}) \subset \Sigma \times \{s\}$ for $j=1,2$, $s=0,1$,
\item there is exactly one point $t_R \in (0,1) \subset I$ such that $\tau(t_R) \in R$, and
\item $\tau(t)=i_1(\tau_1\left(\frac{t}{t_R}\right))$ for $0 \le t \le t_R$ and $\tau(t)=i_2(\tau_2\left(\frac{t-t_R}{1-t_R}\right))$ for $t_R \le t \le 1$.  
\end{enumerate}
In this case, we say that $(\Sigma,\tau)$ is a \emph{concatenation} of $(\Sigma_1,\tau_1)$ and $(\Sigma_2,\tau_2)$. The vertically proper surface $R$ is called the \emph{surface defining the concatenation}. The long knot $(\Sigma_1,\tau_1)$ is called the \emph{left part} of the concatenation and the long knot $(\Sigma_2,\tau_2)$ is called the \emph{right part} of the concatenation. A concatenation is depicted in Figure \ref{cat_fig}.
\end{definition}

\begin{figure}[h]
\[
\begin{array}{ccc}
\begin{array}{c}\psfig{figure=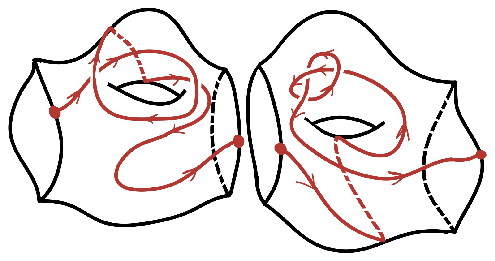}  \end{array} & \to & \begin{array}{c}\psfig{figure=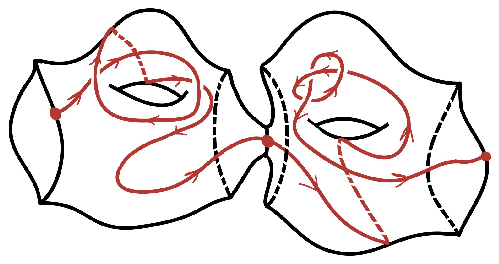}  \end{array}\\
\end{array}
\]
\caption{A concatenation $(\Sigma,\tau)=(\Sigma_1\#\Sigma_2,\tau_1 \# \tau_2)$} \label{cat_fig}
\end{figure}

\subsection{Statement of Main Results} The following theorem was proved by Manturov in unpublished work (see also Problem 20 of \cite{FKM}). Our goal is to present a new proof of this result.

\begin{theorem} \label{main_3} Let $A,B,C,D$ be non-classical linearly prime long virtual knots such that $A \# B$ is non-classical. If $A \# B \leftrightharpoons C \# D$, then $A \leftrightharpoons C$ and $B \leftrightharpoons D$. In particular, distinct non-classical linear primes do not commute in the monoid of long virtual knots. 
\end{theorem}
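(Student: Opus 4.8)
The plan is to argue entirely in the thickened–surface picture. Using the strengthened Kuperberg theorem together with the fact that concatenation of non-classical long knots passes to irreducible representatives, I will realize $A\#B$ by a single irreducible long knot $(\Sigma,\tau)$ that carries \emph{two} concatenation–defining discs — one witnessing the splitting $A\#B$ and one witnessing $C\#D$ — and then play these discs off against the linear primeness of $A$ and $B$.

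\emph{Step 1: two decompositions of one irreducible representative.} Let $(\Sigma,\tau)$ be the irreducible representative of $A\#B$; by the strengthened Kuperberg theorem it exists, has one distinguished boundary component, is unique up to equivalence, and is a descendant of every representative of $A\#B$. Fix irreducible representatives $(\Sigma_A,\tau_A)$ of $A$ and $(\Sigma_B,\tau_B)$ of $B$. By the preceding results, concatenation of non-classical long knots in thickened surfaces is well defined on irreducible representatives: the concatenation $(\Sigma_A\#\Sigma_B,\tau_A\#\tau_B)$ is again irreducible and represents $A\#B$, so by uniqueness I may take $(\Sigma,\tau)=(\Sigma_A\#\Sigma_B,\tau_A\#\tau_B)$ with some defining disc $R_1$. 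Since genus is additive under $\#$ on surfaces with connected distinguished boundary, $g(\Sigma)=g(\Sigma_A)+g(\Sigma_B)$, which is positive because $A$ and $B$ are non-classical; this reproves and makes precise that $A\#B$ is automatically non-classical. Running the same argument on $C\#D\leftrightharpoons A\#B$ with the non-classical knots $C,D$, the knot $(\Sigma_C\#\Sigma_D,\tau_C\#\tau_D)$ built from irreducible representatives $(\Sigma_C,\tau_C)$ of $C$ and $(\Sigma_D,\tau_D)$ of $D$ is also the irreducible representative of $A\#B$, hence equivalent to $(\Sigma,\tau)$; transporting its defining disc through this equivalence exhibits $(\Sigma,\tau)$ also as a concatenation with defining disc $R_2$.

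\emph{Step 2: making $R_1$ and $R_2$ disjoint.} Put $R_1$ and $R_2$ in general position with $R_1\cap R_2$ disjoint from $\text{im}(\tau)$, and remove the intersection curves by an innermost-circle/outermost-arc argument carried out in $(\Sigma\times I)\setminus\text{im}(\tau)$: circle components are eliminated using that $\Sigma\times I$ is irreducible and that $R_1,R_2$ are discs; arc components, whose endpoints lie on $\partial(\Sigma\times I)$, are eliminated using that a properly embedded arc in a disc is boundary-parallel, with the guiding isotopies supported near $R_1\cap R_2$ and hence away from $\text{im}(\tau)$. Granting that $R_2$ can be kept vertically proper and meeting $\text{im}(\tau)$ in exactly its original single point throughout, we reach $R_1\cap R_2=\emptyset$; if along the way $R_1$ and $R_2$ become parallel, the conclusion of Step 3 is immediate.

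\emph{Step 3: primeness.} The disc $R_1$ separates $\Sigma\times I$ into a left part $\cong\Sigma_A\times I$ (containing $\tau(0)$) and a right part $\cong\Sigma_B\times I$, and the connected disc $R_2$ lies in one of them. If $R_2$ lies in the left part, then along $\tau$ the point $R_2\cap\text{im}(\tau)$ precedes $R_1\cap\text{im}(\tau)$, and reading off the long virtual knots carried by the three regions cut out by $R_1\cup R_2$ gives $A\leftrightharpoons C\#E$ and $D\leftrightharpoons E\#B$, where $E$ is the long virtual knot carried by the middle region. Since $A$ is linearly prime, $C\leftrightharpoons\to$ or $E\leftrightharpoons\to$; but $C$ is non-classical, so $C\not\leftrightharpoons\to$, forcing $E\leftrightharpoons\to$ and hence $A\leftrightharpoons C$, $B\leftrightharpoons D$. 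If instead $R_2$ lies in the right part, the symmetric reading gives $C\leftrightharpoons A\#E'$ and $B\leftrightharpoons E'\#D$, and now linear primeness of $B$ together with $D\not\leftrightharpoons\to$ forces $E'\leftrightharpoons\to$, again yielding $A\leftrightharpoons C$, $B\leftrightharpoons D$. Finally, if two distinct non-classical linear primes $A$ and $B$ commuted, then applying the theorem with $(C,D)=(B,A)$ — legitimate since $A\#B$ is non-classical by Step 1 — would give $A\leftrightharpoons B$, a contradiction, so they do not commute.

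\emph{Expected main obstacle.} Everything outside Step 2 is a formal consequence of the two established tools; the delicate point is Step 2, namely keeping $R_2$ a genuine concatenation-defining disc — simultaneously controlling its single intersection with $\text{im}(\tau)$ and its vertically proper boundary behaviour while eliminating its intersections with $R_1$ — and in particular verifying that $R_2$ cannot be forced to meet $\text{im}(\tau)$ more than once during the cut-and-paste.
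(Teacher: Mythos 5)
Your overall architecture — realize $A\#B$ by a single irreducible representative carrying two concatenation discs $R_1$ (for $A\#B$) and $R_2$ (for $C\#D$), and then use linear primeness of the pieces to pin down the middle region — is essentially the strategy the paper follows. But Step 2 as written contains a genuine gap, and it is precisely the part of the argument where the real work happens.

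The gap is in the claim that all arc components of $R_1\cap R_2$ can be eliminated because ``a properly embedded arc in a disc is boundary-parallel.'' This is false for what the paper calls \emph{vertical} intervals, i.e. arcs of $R_1\cap R_2$ joining $\Sigma\times\{0\}$ to $\Sigma\times\{1\}$. Such an arc is boundary-parallel inside the abstract disc $R_1\approx I\times I$, but it is parallel to a \emph{side} of the rectangle — a side lying on $(\partial\Sigma)\times I$ — and the putative compression that would remove it is not a boundary compression of a vertically proper disc in $\Sigma\times I$ at all: the compressing half-disc meets $\Sigma\times\{0\}$, $\Sigma\times\{1\}$ and $(\partial\Sigma)\times I$ simultaneously, and pushing $R_2$ across it destroys vertical properness. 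Concretely, the traces $R_1\cap(\Sigma\times\{0\})$ and $R_2\cap(\Sigma\times\{0\})$ are separating arcs on $\Sigma$ that may have essential (algebraic) intersection, and no ambient isotopy of $\Sigma\times I$ can remove those crossings. The paper does \emph{not} try to make $R_{12}$ and $R_{34}$ disjoint; it shows that all circle, partisan, cornered and horizontal components can be removed (each by a separate lemma from \S 3), and then handles the surviving \emph{vertical bunch} directly by cutting along $R_{12}$, re-identifying the two copies of the distinguished sub-rectangle $R_{12}^*$, and destabilizing along the sub-rectangles of $R_{34}$. That cut-and-identify move is the missing ingredient in your Step 2; without it (or an argument that vertical bunches really are empty, which is not true in general) the reduction to the disjoint case does not go through.

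A second, related gap: even for circle and non-vertical arc components, the elimination is not a ``routine isotopy supported near $R_1\cap R_2$.'' Each innermost circle or arc bounds, together with pieces of $\partial(\Sigma\times I)$, a $3$-ball $B_{12}$, and this ball may very well contain an arc of $\tau$. The paper's sub-cases carefully split on $B_{12}\cap\text{im}(\tau)$, and when it is nonempty they invoke linear primeness plus the bead-on-a-string isotopy (Lemma~\ref{beadonastring}) to argue that the captured arc of $\tau$ is unknotted, or derive a contradiction with the non-classicality of the summands. That use of the hypotheses is substantive, not cosmetic, and your Step 2 omits it; your listed ``expected obstacle'' (keeping $R_2$ meeting $\tau$ once) is real but is only part of the difficulty. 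Finally, Step 1's assertion that $(\Sigma_A\#\Sigma_B,\tau_A\#\tau_B)$ is itself irreducible deserves a line of justification (it follows from additivity of surface genus under concatenation together with Lemma~\ref{main_1}(4) applied to the summands), although this is a minor point compared with Step 2.
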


\noindent As a convenience to the reader, we endeavor to provide a self-contained proof of the result. We wish to minimize as much as possible the number of appeals to the literature while citing all the relevant work which is known to the author. The proof is divided into several main lemmas. First, it is shown how to move from the diagrammatic interpretation of long virtual knots to the thickened surface interpretation. In particular, it must be shown how concatenations affect long knots in thickened surfaces which correspond to linearly prime long virtual knots.  The correspondence is constructed in a similar way to the correspondence between virtual knots and stability classes of knots in thickened surfaces (see \cite{kamkam},\cite{CKS}).

\begin{lemma}\label{prime_corr} There is a one-to-one correspondence $\mathscr{F}:\mathscr{K} \to \mathscr{K}(\mathscr{S})$ such that if $\mathscr{F}^{-1}(\Sigma,\tau)$ is a linearly prime long virtual knot and $(\Sigma,\tau)=(\Sigma_1\#\Sigma_2,\tau_1\#\tau_2)$ for some long knots $(\Sigma_1,\tau_1),(\Sigma_2,\tau_2)$, then either $\mathscr{F}^{-1}(\Sigma_1,\tau_1) \leftrightharpoons \to$ or $\mathscr{F}^{-1}(\Sigma_2,\tau_2) \leftrightharpoons \to$.
\end{lemma}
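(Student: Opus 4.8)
The plan is to adapt the standard correspondence between virtual knots and stable equivalence classes of knots in thickened surfaces (see \cite{kamkam}, \cite{CKS}) to the long, distinguished-boundary setting, and then to verify that it intertwines the diagrammatic concatenation of Definition \ref{defn_concat_virtual} with concatenation of long knots in thickened surfaces; linear primality will then transfer directly. To construct $\mathscr{F}$, start from a long virtual knot diagram $K \subset \mathbb{R}^2$, delete its virtual crossings to obtain a $4$-valent graph with two rays running to $\pm\infty$, let $\Sigma_K$ be a regular neighborhood of this graph (bands along the arcs, discs at the classical crossings, two half-open strips along the rays), and let $\tau_K \subset \Sigma_K \times I$ be the lift of $K$ prescribed by the over/under data. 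Compactifying the two strips yields two boundary arcs; declaring the one or two boundary components carrying $\tau_K(0),\tau_K(1)$ to be distinguished puts $\Sigma_K$ in $\mathscr{S}$, and we set $\mathscr{F}(K)=[(\Sigma_K,\tau_K)]_{\sim_s}$. To see this descends to a map on $\mathscr{K}$ one checks the generators: a Reidemeister move changes $(\Sigma_K,\tau_K)$ by an ambient isotopy composed with an inessential (de)stabilization, while a detour move---which only re-routes an arc bearing virtual crossings---is realized by a stabilization that adds a handle over which the arc slides, exactly as in the closed case \cite{CKS}.

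An inverse $\mathscr{G}$ is given by projection. Retract $\Sigma$ onto a spine that is a graph with two legs ending at $\tau(0)$ and $\tau(1)$, immerse the spine in $\mathbb{R}^2$ with those legs running to $-\infty$ and $+\infty$, take a regular neighborhood, and push $\tau$ through $\Sigma \times I \to \Sigma \to \mathbb{R}^2$: self-crossings of the immersed spine become virtual crossings and double points of the projected $\tau$ become classical crossings, with over/under read from the $I$-coordinate. The resulting long virtual knot is unchanged by an equivalence of long knots in a fixed thickened surface (which only induces Reidemeister and detour moves) and by a stabilization (the new $1$-handle can be absorbed by a detour move), so $\mathscr{G}$ is well-defined on $\mathscr{K}(\mathscr{S})$. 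Because the neighborhood construction and the spine-projection are mutually inverse up to stabilization, $\mathscr{G}\circ\mathscr{F}=\mathrm{id}$ and $\mathscr{F}\circ\mathscr{G}=\mathrm{id}$, so $\mathscr{F}$ is a bijection with $\mathscr{F}^{-1}=\mathscr{G}$.

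For the final assertion, suppose $(\Sigma,\tau)=(\Sigma_1\#\Sigma_2,\tau_1\#\tau_2)$ with defining disc $R$. Then $R\cap(\Sigma\times\{1\})$ is an arc $\alpha$ separating $\Sigma$ into copies of $\Sigma_1$ and $\Sigma_2$, and $\tau$ meets $R$ only at $\tau(t_R)$. Joining a spine of $\Sigma_1$ and a spine of $\Sigma_2$ across $\alpha$ gives a spine of $\Sigma$; since $\mathscr{G}$ does not depend on the chosen spine or immersion, we may perform the projection so that a neighborhood of the $\Sigma_1$-spine lands in $\mathbb{H}_L$, a neighborhood of the $\Sigma_2$-spine in $\mathbb{H}_R$, the arc $\alpha$ maps over the $y$-axis, and the free leg of $\tau_1$ continues as the free leg of $\tau_2$. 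As $\mathrm{im}(i_1)$ and $\mathrm{im}(i_2)$ meet only along $R$, no crossing of the resulting diagram---classical or virtual---joins the two halves, so the diagram is literally $\mathscr{G}(\Sigma_1,\tau_1)\#\mathscr{G}(\Sigma_2,\tau_2)$. Hence $\mathscr{F}^{-1}(\Sigma,\tau)\leftrightharpoons\mathscr{F}^{-1}(\Sigma_1,\tau_1)\#\mathscr{F}^{-1}(\Sigma_2,\tau_2)$, and when $\mathscr{F}^{-1}(\Sigma,\tau)$ is linearly prime the definition of that notion forces $\mathscr{F}^{-1}(\Sigma_1,\tau_1)\leftrightharpoons\to$ or $\mathscr{F}^{-1}(\Sigma_2,\tau_2)\leftrightharpoons\to$.

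The main obstacle I anticipate is this last step: one must choose the spine and immersion so that the $R$-decomposition is visibly respected, in particular placing the two distinguished endpoints of each $\Sigma_j$---the one inherited from $\Sigma$ and the new one created by cutting along $R$ (so that $num(\Sigma_j)$ may equal $2$)---so that $\Sigma_1$ and $\Sigma_2$ occupy complementary half-planes with a matching neck along $\alpha$, and then verify that the inessential (de)stabilizations and detour moves used to bring the picture into standard form do not disturb the concatenation. Establishing well-definedness of $\mathscr{G}$ under stabilization (the handle-absorption step) is the other place where the long structure must be watched carefully.
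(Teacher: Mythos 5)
Your proposal is essentially the paper's own proof, lightly reorganized. The regular-neighborhood-of-the-shadow-graph construction of $\mathscr{F}$ is exactly the band-pass presentation $\mathscr{F}_0$ of Section \ref{oto_corres}, the spine-projection $\mathscr{G}$ is exactly the paper's $\mathscr{F}_0^{-1}$ (band decomposition of $\Gamma$, project, read off crossings), and the decomposition-preservation step — choosing the projection so that the copies of $\Sigma_1\times I$ and $\Sigma_2\times I$ land in $\mathbb{H}_L$ and $\mathbb{H}_R$ with the defining disc $R$ over the $y$-axis — is the same use of the freedom in $\mathscr{F}_0^{-1}$ made in Proposition \ref{corr_decomp_thm}. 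The only structural difference is that the paper inserts an intermediate category (long knot diagrams on surfaces, Section \ref{diag_on_surf}) and splits off the detour/Reidemeister well-definedness into a Gauss-diagram argument (Proposition \ref{oto_diag_on_surf}), whereas you go directly to thickened surfaces and argue handle absorption by detour moves; these are equivalent and both at the level of rigor the paper itself permits by citing \cite{kamkam,CKS,roots}.
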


\noindent Secondly, we prove a slightly stronger version of Kuperberg's theorem \cite{kuperberg} for long virtual knots. The slight strengthening is useful for our purposes. It shows that we can find a ``unique'' irreducible descendant of any long knot in a thickened surface in the sense that the surface is of minimal genus and among those surfaces of minimal genus we have the smallest possible number of distinguished boundary components.

\begin{lemma}\label{main_1} Let $(\Sigma,\tau)$ be a long knot in $\Sigma \times I$. Then all of the following hold:
\begin{enumerate}
\item Either $(\Sigma,\tau)$ is irreducible or it has an irreducible descendent.
\item If $(\Sigma',\tau')\sim_s (\Sigma,\tau)$, then they have a common irreducible descendent.
\item If $(\Sigma,\tau)$ has more than one irreducible descendent, then they may be obtained from one another by inessential stabilizations/destabilizations, o.p. homeomorphisms of surfaces, and equivalences of long knots in thickened surfaces.
\item The genus and number of distinguished boundary components are the same for all irreducible $(\Sigma_0,\tau_0)$ which are stably equivalent to $(\Sigma,\tau)$.   
\end{enumerate}
\end{lemma}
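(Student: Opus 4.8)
The plan is to run the whole argument off the lexicographically-ordered \emph{complexity} $(g(\Sigma),\mathrm{num}(\Sigma))\in\mathbb{Z}_{\ge 0}\times\{1,2\}$, which is well-ordered. Since cutting $\Sigma\times I$ along a vertically proper disc or annulus and discarding components never raises the genus of the remaining surface, a genus-reducing destabilization strictly lowers the first coordinate of the complexity, while a num-reducing destabilization that is not genus-reducing fixes the first coordinate and lowers the second; in both cases an essential destabilization strictly decreases the complexity. Hence there is no infinite chain of essential destabilizations, which gives (1): if $(\Sigma,\tau)$ is not already irreducible, perform essential destabilizations until none remains available, and the resulting descendant is irreducible by definition.

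For the remaining parts I would pass to the equivalence relation $\approx$ generated by inessential stabilizations/destabilizations, orientation-preserving homeomorphisms of surfaces, and equivalences of long knots in thickened surfaces. The first thing to check is that $\approx$ preserves $(g,\mathrm{num})$: for homeomorphisms and long-knot equivalences this is clear, and an inessential destabilization cuts $\Sigma$ along a boundary-parallel arc or circle disjoint from $\mathrm{im}(\tau)$, which alters neither the genus nor --- since the distinguished boundary components are exactly the ones meeting $\{\tau(0),\tau(1)\}$ --- the number of distinguished boundary components. Next, read ``perform one essential destabilization'' as a rewriting relation; by the first paragraph it is terminating, and its normal forms are precisely the $\approx$-classes containing irreducible long knots. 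Granting that this relation is \emph{locally confluent}, Newman's lemma makes it confluent, so each stable class has a unique normal form; then (2) holds because stably equivalent long knots have a common irreducible descendant (their shared normal form, reached by destabilizing each down), (3) holds because any two irreducible descendants of $(\Sigma,\tau)$ are both normal forms of its stable class and hence $\approx$-equivalent, and (4) holds because $(g,\mathrm{num})$ is an $\approx$-invariant and so is constant on that normal form --- with the byproduct that this common value is the minimum genus and, among minimum-genus representatives, the minimum number of distinguished boundary components.

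The main obstacle is local confluence: if $(\Sigma,\tau)$ destabilizes along two vertically proper surfaces $W_1,W_2$ (discs or annuli, each disjoint from $\mathrm{im}(\tau)$), the two resulting long knots must have a common descendant. Here I would follow the template of Kuperberg's proof for closed virtual knots \cite{kuperberg}: put $W_1$ and $W_2$ in general position and, among isotopies keeping everything disjoint from $\mathrm{im}(\tau)$, use innermost-disc surgeries (killing circles of $W_1\cap W_2$ via the irreducibility of $\Sigma\times I$) and outermost-arc $\partial$-compression surgeries to reduce to the case $W_1\cap W_2=\emptyset$, checking along the way that the modified surfaces still determine destabilizations with the same descendants. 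Disjoint destabilizations then commute up to orientation-preserving homeomorphism: after cutting along $W_1$ and discarding the components missing $\mathrm{im}(\tau)$, the surface $W_2$ survives (or its destabilization has already been subsumed), and cutting along it gives the same long knot as doing the two cuts in the other order; when one of the destabilizations is inessential the two outcomes agree only up to $\approx$, which still suffices.

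I expect essentially all of the real difficulty to sit in this last paragraph: separating the disc and annulus cases for $W_1$ and $W_2$, keeping careful track of which connected component carries $\mathrm{im}(\tau)$ and of how $\mathrm{num}$ behaves under each cut, performing all isotopies and surgeries in the p.l.\ category and disjoint from $\tau$ (including its endpoints, which must remain in the interiors of the distinguished annuli), and verifying that inessential moves neither create nor destroy an essential destabilization (so that ``normal form $=$ $\approx$-class of an irreducible long knot'' is literally correct). The abstract-rewriting wrapper --- termination plus Newman's lemma --- is then formal. Relative to Kuperberg's theorem, the only genuinely new ingredient is the bookkeeping of distinguished boundary components, which is exactly what promotes ``unique minimal-genus surface'' to the full statement of Lemma~\ref{main_1}.
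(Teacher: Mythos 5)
Your treatment of part (1) coincides with the paper's: both run on the lexicographic well-ordering of $(g,\mathrm{num})$. For parts (2)--(4), the Newman's-lemma packaging (termination plus local confluence modulo $\approx$) is a legitimate re-organization of what the paper does more directly: the paper supposes two destabilizations of a common $(\Sigma_0,\tau_0)$ have no common descendant and contradicts minimality of the number of components of $F'\cap F''$, which is logically the same as verifying local confluence and invoking termination. You have also correctly located the real content in the confluence step for two essential destabilization surfaces.

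There is, however, one concrete error in your sketch of that step: you claim to use innermost-circle and outermost-arc $\partial$-compression surgeries to ``reduce to the case $W_1\cap W_2=\emptyset$.'' That is not achievable. \emph{Vertical} intervals --- components of $W_1\cap W_2$ running from $\Sigma\times\{0\}$ to $\Sigma\times\{1\}$ --- cannot in general be removed by ambient isotopy of $W_1,W_2$ in the complement of $\tau$ (think of two vertical annuli over transversally intersecting essential curves on a punctured torus). The paper's argument, following Kuperberg, reduces $F'\cap F''$ not to the empty set but to a nonempty \emph{vertical bunch}, and then does something qualitatively different: it takes a regular neighborhood $N$ of the vertical bunch, sets $S=\mathrm{cl}(\partial N\setminus(\Sigma_0\times\partial I))$, and destabilizes $(\Sigma_0,\tau_0)$ along all components of $S$; the result is a common descendant of both one-step destabilizations. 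Your ``disjoint destabilizations commute'' paragraph only applies once the intersection is empty, so without the vertical-bunch replacement the local-confluence argument is incomplete. The surgeries you do invoke for circles and non-vertical arcs match the paper's Lemmas \ref{two_inessential}--\ref{horiz_bunches}, although the paper must split into cases (annulus/annulus, disc/annulus, disc/disc, and partisan, cornered, horizontal arcs) precisely to control the interaction with the distinguished boundary components --- which, as you note, is the bookkeeping that upgrades Kuperberg's closed-knot statement to this one.
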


\noindent Lastly, we prove a slightly stronger version of a theorem due to Manturov on destablizations of decompositions of virtual knots \cite{kauffmant,manturov_compact_long}. Manturov has shown that for the two types of decompositions of virtual knots, a destabilization either preserves the decomposition and destabilizes one of the components, or it converts one type of decomposition to the other. For long knots in thickened surfaces, we show that destabilizations preserve decompositions when both components are non-classical. Moreover, an irreducible descendent must inherit the decomposition.   

\begin{lemma} \label{main_2} Let $(\Sigma_1,\tau_1)$, $(\Sigma_2,\tau_2)$, and $(\Sigma,\tau)=(\Sigma_1\#\Sigma_2,\tau_1\#\tau_2)$ be long knots in thickened surfaces which stabilize to non-classical long virtual knots. Then there is an irreducible $(\Sigma_0,\tau_0)$ such that:
\begin{enumerate}
\item $(\Sigma_0,\tau_0)$ is a descendant of $(\Sigma_1 \# \Sigma_2,\tau_1 \# \tau_2)$, 
\item $(\Sigma_0,\tau_0)=(\Sigma_1' \# \Sigma_2',\tau_1' \# \tau_2')$, and
\item $(\Sigma_i',\tau_i')\sim_s (\Sigma_i,\tau_i)$ for $i=1,2$. 
\end{enumerate}
\end{lemma}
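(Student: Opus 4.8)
The plan is to prove the statement by induction on $g(\Sigma)+num(\Sigma)$, destabilizing $(\Sigma,\tau)$ repeatedly while keeping each destabilization confined to one side of the disc defining the concatenation, so that the decomposition survives every step. Let $R$ be the vertically proper disc realizing $(\Sigma,\tau)=(\Sigma_1\#\Sigma_2,\tau_1\#\tau_2)$; thus $R$ separates $\Sigma\times I$ into $i_1(\Sigma_1\times I)$ and $i_2(\Sigma_2\times I)$, and $\mathrm{im}(\tau)$ meets $R$ transversally in the single point $\tau(t_R)\in\mathrm{int}(R)$. If $(\Sigma,\tau)$ is irreducible we are done, taking $(\Sigma_0,\tau_0)=(\Sigma,\tau)$ and $(\Sigma_i',\tau_i')=(\Sigma_i,\tau_i)$. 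Otherwise there is a genus- or num-reducing destabilization of $(\Sigma,\tau)$ along a vertically proper disc or annulus $W$ disjoint from $\mathrm{im}(\tau)$.

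The key step is to replace $W$ by such a surface $W^*$ with $W^*\cap R=\emptyset$. Put $W$ in general position with respect to $R$, so $W\cap R$ is a disjoint union of circles and arcs. First remove the circles: an innermost one on $R$ bounds a disc $D\subset R$ with $D\cap W=\partial D$, and since $\Sigma\times I$ is a handlebody (hence irreducible), $\partial D$ together with a subdisc of $W$ bounds a ball $B$ in $\Sigma\times I$; because $\mathrm{im}(\tau)$ meets $R$ only at $\tau(t_R)$, the arc $\mathrm{im}(\tau)$ cannot enter and leave $B$ through $D$, so $B\cap\mathrm{im}(\tau)=\emptyset$ and the circle is removed by an isotopy of $W$ across $B$. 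Once $W\cap R$ consists only of arcs, these arcs cut the disc $R$ into faces, at least two of which are outermost (each bounded by a single arc of $W\cap R$ together with an arc of $\partial R$); since $\tau(t_R)$ lies in the interior of at most one face, we may choose an outermost disc $D\subset R$ with $D\cap\mathrm{im}(\tau)=\emptyset$. Surgering $W$ along $D$ produces two vertically proper surfaces, each meeting $R$ in fewer components than $W$; a standard argument, of the kind used in the uniqueness assertions of Lemma \ref{main_1}, shows that at least one of these two surfaces still defines a genus- or num-reducing destabilization, so we may replace $W$ by it. Iterating, we arrive at $W^*$ with $W^*\cap R=\emptyset$.

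Because $W^*$ is connected and disjoint from $R$, it lies in $i_1(\Sigma_1\times I)$ or in $i_2(\Sigma_2\times I)$; say the former. Then $W^*$ is vertically proper in $\Sigma_1\times I$ and disjoint from $\mathrm{im}(\tau_1)$, and the component of $\Sigma\times I$ cut along $W^*$ that is discarded lies in $i_1(\Sigma_1\times I)$ and is disjoint from $R$ (the retained component contains $\tau(t_R)\in R$, hence all of $R$). So this destabilization alters only the left side: it produces $(\Sigma_1',\tau_1')\sim_s(\Sigma_1,\tau_1)$ and exhibits the new thickened surface as a concatenation $(\Sigma_1'\#\Sigma_2,\tau_1'\#\tau_2)$ again defined by $R$. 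Both of its parts still stabilize to non-classical long virtual knots, it is a descendant of $(\Sigma,\tau)$, and its $g+num$ is strictly smaller since the destabilization was genus- or num-reducing. By the inductive hypothesis there is an irreducible $(\Sigma_0,\tau_0)=(\Sigma_1''\#\Sigma_2',\tau_1''\#\tau_2')$ with $(\Sigma_1'',\tau_1'')\sim_s(\Sigma_1',\tau_1')\sim_s(\Sigma_1,\tau_1)$ and $(\Sigma_2',\tau_2')\sim_s(\Sigma_2,\tau_2)$, and it is a descendant of $(\Sigma_1'\#\Sigma_2,\tau_1'\#\tau_2)$, hence of $(\Sigma,\tau)$. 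This closes the induction.

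The main obstacle is the outermost-arc surgery in the second paragraph: one must check that the two surfaces produced are vertically proper (using the compatibility of the faces of $W$ and of $R$ lying on $\Sigma\times\{0\}$, $\Sigma\times\{1\}$, and $\partial\Sigma\times I$) and that one of them inherits the genus- or num-reducing property. The latter is the same bookkeeping as in Lemma \ref{main_1}: the destabilization along $W$ factors, up to the invariants $g$ and $num$, through the destabilizations along the two surgered pieces, so the non-negative drops in $g+num$ they contribute must sum to the positive drop contributed by $W$. The annulus case for $W$ (in particular, core circles of the annulus appearing in $W\cap R$) and the verification that inessential stabilizations and surface homeomorphisms preserve the concatenation structure need a little more care but no new ideas. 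The non-classicality hypothesis is used only to ensure that neither part ever degenerates to the trivial long knot, so that the terminal parts $(\Sigma_i'',\tau_i'')$ are genuine surface long knots, as required for the application to Theorem \ref{main_3}.
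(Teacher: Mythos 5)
Your plan departs from the paper at a crucial point: you try to make $W\cap R=\emptyset$ before destabilizing, whereas the paper only shows that $W\cap R$ can be taken to be a (possibly nonempty) \emph{vertical bunch}, and then handles the residual vertical intervals by a different device (the Exploding Lemma and the Num Reducing Lemma, which destabilize along $W$ \emph{and then additionally} along all sub-rectangles of $R$ other than the one containing $\tau(t_R)$).

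The gap is in your outermost-arc surgery. First, when $W$ is an annulus and the arc $e$ of $W\cap R$ is essential in $W$, cutting $W$ along $e$ does \emph{not} disconnect $W$; the surgery yields a single disc, not ``two vertically proper surfaces.'' More seriously, when $e$ is a vertical arc (endpoints on $\Sigma\times\{0\}$ and $\Sigma\times\{1\}$), the outermost face $D\subset R$ cut off by $e$ is bounded by $e$ together with a full side of $R$ on $(\partial\Sigma)\times I$ and sub-arcs of the top and bottom; surgering $W$ along this $D$ effectively replaces the arc $W\cap(\Sigma\times\{1\})$ near its crossing with $R\cap(\Sigma\times\{1\})$ by a detour along $R$, and nothing forces either resulting piece to be genus- or num-reducing. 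In the extreme case neither piece is, and the induction stalls. You defer this to ``the same bookkeeping as in Lemma \ref{main_1},'' but that lemma never makes such a claim for vertical bunches: there, when $F'\cap F''$ is a vertical bunch, the proof cuts along \emph{all} the components of $\text{cl}(\partial N\setminus(\Sigma\times\partial I))$ for a regular neighborhood $N$ of $F'\cap F''$, which is precisely the kind of simultaneous multi-surface destabilization that your reduction to ``replace $W$ by a single surgered piece'' tries to avoid. The paper's Exploding Lemma is the analogue of that regular-neighborhood trick, and it is what makes the vertical-bunch case tractable while still preserving the stability classes of both concatenation factors. As written, your induction cannot be closed without supplying something equivalent.

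Your circle-removal step, the treatment of arcs that are partisan, cornered, or horizontal (which do submit to innermost/outermost compressions, with the non-classicality hypothesis used exactly where the paper uses it to rule out a factor degenerating to a classical or trivial long knot via the bead-on-a-string isotopy), and the final reduction once $W^*\cap R=\emptyset$ are all consistent with the paper. The failure is confined to your assertion that vertical arcs of $W\cap R$ can be eliminated by surgery while retaining a genus- or num-reducing destabilization surface.
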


\noindent The proof of Theorem \ref{main_3} is then reduced to considering the decompositions of irreducible representatives.  This is established using the techniques of geometric topology in the piecewise linear category.
\newline
\newline
There has been much recent and important work on the existence and uniqueness of prime decompositions of virtual knots \cite{kauffmant,koramat,manturov_compact_long,roots,matveevprime}.  The present paper uses similar geometric techniques but applies them to long virtual knots.
\newline
\newline
The outline of the paper is as follows. Section \ref{sec_long_knots} contains the proof of Lemma \ref{prime_corr}. In section \ref{sec_geom_construct}, we establish some geometric lemmas needed in the proofs of Lemma \ref{main_1}, Lemma \ref{main_2}, and Theorem \ref{main_3}. Section \ref{sec_kup} contains the proof of Lemma \ref{main_1}. Section \ref{sec_stab_decomp} contains the proof of Lemma \ref{main_2}. Finally, our proof of Theorem \ref{main_3} is disclosed in Section \ref{sec_non_commute}.

\subsection{Acknowledgements} The author is deeply indebted to V. O. Manturov for providing a description of his unpublished proof of Theorem \ref{main_3}. In the author's attempt to reconstruct and generalize Manturov's argument, a somewhat different proof was discovered. This alternate version is presented here. The author suspects that his alternative has no characteristics to be preferred over Manturov's original version. However, it certainly provides a different way of looking at the problem. The author is also indebted to Manturov for carefully reading numerous previous drafts and providing excellent counsel in regards to improving the exposition.
\newline
\newline
The author is also grateful for helpful conversations and correspondences with H. A. Dye, R. Fenn, A. Kaestner, L. Kauffman, S. Nelson, K. Orr, and R. Todd.  

\section{Proof of Lemma \ref{prime_corr}: Correspondence of Decompositions} \label{sec_long_knots}

\noindent The aim of this section is to prove Lemma \ref{prime_corr}, that there is a one-to-one correspondence between long virtual knots and long knots in thickened surfaces modulo stabilization. Furthermore, it is proved that the one-to-one correspondence preserves ``primality''. In particular, we show that if $(\Sigma,\tau)$ is a long knot which corresponds to a linearly prime long virtual knot and $(\Sigma,\tau)=(\Sigma_1\#\Sigma_2,\tau_1\#\tau_2)$, then either $(\Sigma_1,\tau_1)$ stabilizes to the long trivial knot or $(\Sigma_2,\tau_2)$ stabilizes to the long trivial knot.  
\newline
\newline
The one-to-one correspondence factors through stability classes of long knot diagrams on surfaces \cite{kamkam,CKS}. In Section \ref{diag_on_surf}, we give a precise definition of these equivalence classes following \cite{CKS}. We construct in Section \ref{oto_corres} a one-to-one correspondence between long virtual knots and long knot diagrams on surfaces modulo stabilization. Then we construct a one-to-one correspondence between long knot diagrams on surfaces and long knots in thickened surfaces modulo stabilization. Together this makes a one-to-one correspondence between long virtual knots and long knots in thickened surfaces modulo stabilization. The arguments in Section \ref{diag_on_surf} and \ref{oto_corres} are standard \cite{kamkam} in virtual knot theory and can be skipped by experts. Lemma \ref{prime_corr} is proved in Section \ref{sec_decomp_corr}.

\subsection{Long Knot Diagrams on Surfaces} \label{diag_on_surf} In this section, we give the precise definitions of long knot diagrams on surfaces and stable equivalence classes of long knot diagrams on surfaces. 
\newline
\newline
Let $\Sigma$ be a closed connected oriented surface with one or two distinguished boundary components. A \emph{long knot diagram} on $\Sigma$ is an immersion $\tau:I \to \Sigma$ such that each double point is marked as a classical crossing, $\tau(0),\tau(1) \in \partial \Sigma$, $\tau(0)$ and $\tau(1)$ are in the distinguished boundary component if $num(\Sigma)=1$, and $\tau(0),\tau(1)$ are in different distinguished boundary components if $num(\Sigma)=2$.  
\newline
\newline
Two long knot diagrams on a surface $\Sigma$ as above are considered \emph{equivalent} if they may be obtained from one another by a finite sequence of Reidemeister moves on $\Sigma$. Two long knot diagrams $(\Sigma_1,\tau_1)$, $(\Sigma_2,\tau_2)$ are said to be \emph{elementary equivalent} \cite{CKS,kamkam} if there is a connected oriented compact surface $\Gamma$ and orientation preserving embeddings $g_1:\Sigma_1 \to \Gamma$, $g_2:\Sigma_2 \to \Gamma$ such that $g_1(\tau_1)$ and $g_2(\tau_2)$ are Reidemeister equivalent on $\Gamma$. We write $(\Sigma_1,\tau_1)\sim_e(\Sigma_2,\tau_2)$ if they are elementary equivalent. Two long knots $(\Sigma_1,\tau_1)$, $(\Sigma_2,\tau_2)$ are said to be \emph{stably equivalent} if there is a finite sequence:
\[
(\Sigma_1,\tau_1)=(\Gamma_0,\gamma_0)\sim_e(\Gamma_1,\gamma_1)\sim_e\cdots\sim_e(\Gamma_n,\gamma_n)=(\Sigma_2,\tau_2),
\] 
of elementary equivalences between them. If $(\Sigma_1,\tau_1)$ and $(\Sigma_2,\tau_2)$ are stably equivalent, we write $(\Sigma_1,\tau_1)\sim_s(\Sigma_2,\tau_2)$.

\subsection{Correspondence of Long Virtual Knots and Long Knots on Surfaces} \label{oto_corres} To each long virtual knot, we associate a long knot diagram on a surface. The construction is similar to that of the virtual knot case \cite{CKS,kamkam}. It then follows that there is a one-to-one correspondence between long virtual knots and stability classes of long knots in thickened surfaces.
\newline
\newline
A \emph{band-pass presentation} of a long virtual knot diagram $K$ is constructed as follows. Consider $K$ as a long virtual knot diagram on $S^2\backslash\{\infty\}$. Let $U$ be a small coordinate neighborhood of $\infty$ which is chosen so that $U \cap \text{im}(K)\subset \mathbb{R}$ and $\mathbb{R}\backslash (U\cap \text{im}(K))$ is a closed interval. Let $V$ be a regular neighborhood of $\partial U$ in $\mathbb{R}^2 \backslash U$. We define $\partial U$ to be the distinguished boundary component. At each classical crossing of $K$, a ``cross'' is drawn. At each virtual crossing, a pair of overpassing bands is drawn in $\mathbb{R}^3$.  The crosses, bands, and the regular neighborhood $V$ are connected by bands in $\mathbb{R}^2$ along the arcs of $K$ (see Figure \ref{longskimm}). This gives a connected oriented surface $\Sigma_K$ with one distinguished boundary component $\partial U$. Let $\tau_K$ denote the long knot diagram on $\Sigma_K$. The long knot diagram will be denoted by the pair $(\Sigma_K,\tau_K)$. Define $\mathscr{F}_0(K)=(\Sigma_K,\tau_K)$.

\begin{figure}[h]
\[
\begin{array}{|ccc|} \hline
 & & \\
\begin{array}{c}\scalebox{.5}{\psfig{figure=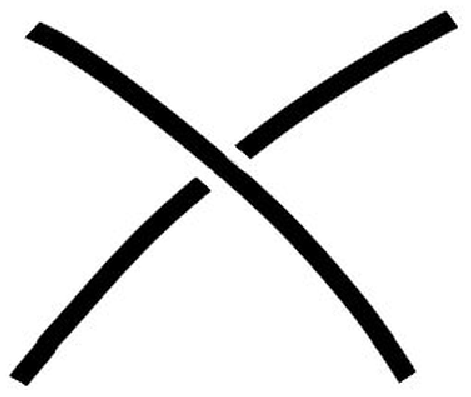}} \end{array} & \to\to & \begin{array}{c}\scalebox{.5}{\psfig{figure=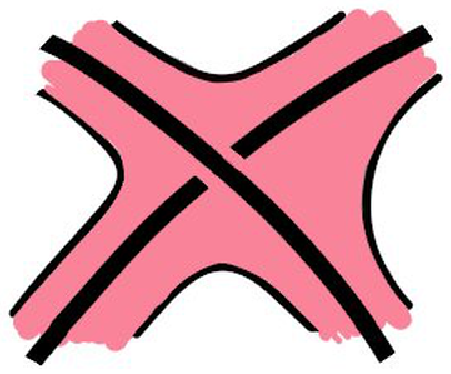}} \end{array}\\
\begin{array}{c}\scalebox{.5}{\psfig{figure=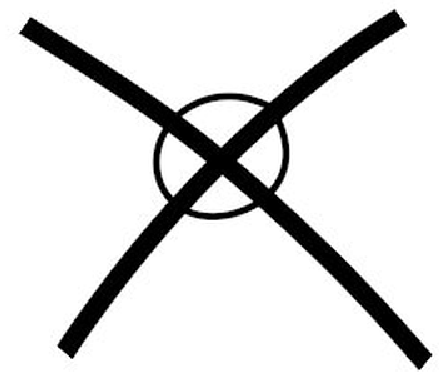}} \end{array} & \to\to & \begin{array}{c}\scalebox{.5}{\psfig{figure=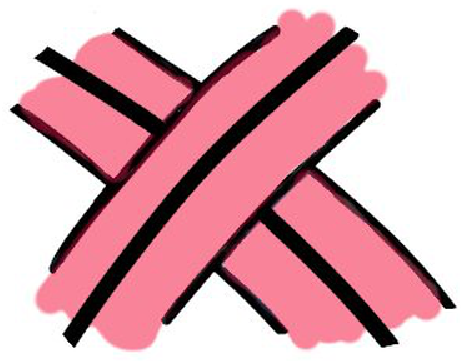}} \end{array}\\ \hline
\end{array}
\]
\[
\begin{array}{|ccccc|} \hline
& & & & \\
\begin{array}{c}\scalebox{.05}{\psfig{figure=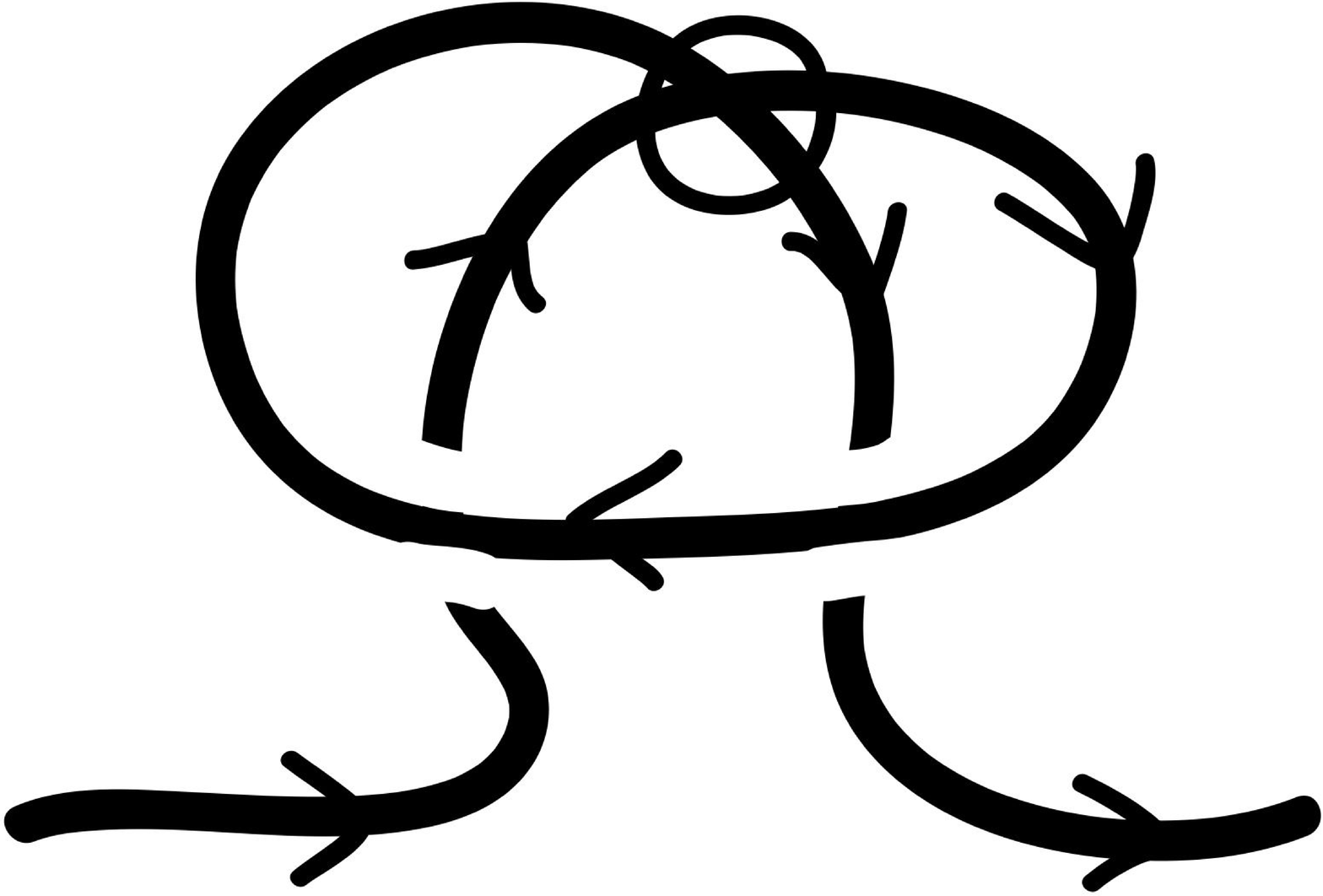}} \end{array} & \to & \begin{array}{c}\scalebox{.75}{\psfig{figure=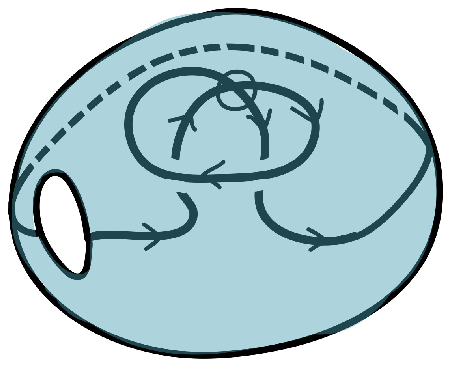}} \end{array} & \to &
\begin{array}{c}\scalebox{.75}{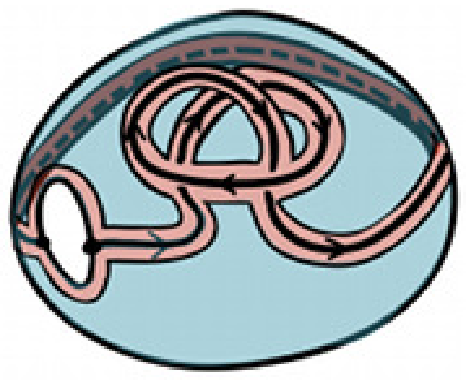} \end{array} \\ \hline
\end{array}
\]
\caption{Construction of a band-pass presentation.} \label{longskimm}
\end{figure}
\hspace{1cm}
\newline
The function $\mathscr{F}_0$ from long virtual knot diagrams to long knot diagrams on surfaces has an inverse (see also \cite{kamkam}). Let $(\Sigma,\tau)$ be a long knot diagram on a surface. There is a compact connected oriented surface $\Gamma$ and an embedding $\Gamma \to \Sigma$ such that $\text{im}(\tau)$ together with the distinguished boundary component(s) of $\Sigma$ is a deformation retract of $\Gamma$. Note that $(\Gamma,\tau)\sim_e(\Sigma,\tau)$.  
\newline
\newline
The next step in the construction of $\mathscr{F}_0^{-1}(\Sigma,\tau)$ is to take a band decomposition of $\Gamma$. Indeed, we have discs which contain individual crossings of $\tau$, bands which contain arcs of $\tau$, and bands along boundary components containing the ends of $\tau$. The band decomposition can be embedded in $\mathbb{R}^3=\mathbb{R}^2 \times \mathbb{R}$ so that the discs around the crossings are contained in $\mathbb{R}^2 \times \{0\}$. 
\newline
\newline
Continuing in the construction of $\mathscr{F}_0^{-1}(\Sigma,\tau)$, if $x$ is a disc containing a crossing in $\Gamma$, let $r(x)$ denote the image of the crossing in $\mathbb{R}^2 \times \{0\}$. Then $\partial x \cap \text{im}(\tau)$ contains four points. If the discs are numbered $x_1,x_2,\ldots, x_m$, then we label the four points in any order as $x_{i,1}$, $x_{i,2}$, $x_{i,3}$, and $x_{i,4}$. We connect $r(x_{i,j})$ to $r(x_{k,l})$ in $\mathbb{R}^2\backslash \cup_{i} r(x_i)$ by a simple (p.l.) arc $a_{i,j,k,l}$ if $x_{i,j}$ and $x_{k,l}$ lie on the same band of $\Gamma$. By a general position argument, the set of all $a_{i,j,k,l}$ may be chosen so that they intersect transversally and any point of intersection is the intersection of exactly two arcs. Now, there is a unique point $x_{i_0,j_0}$ which is connected by a simple (p.l.) arc in $\Gamma$ to $\tau(0)$ and a unique point $x_{i_1,j_1}$ which is connected by a simple (p.l.) arc to $\tau(1)$. Let $B$ be an open ball centered at the origin in $\mathbb{R}^2$ containing all $r(x_i)$ and $a_{i,j,k,l}$. Connect $x_{i_0,j_0}$ to the leftmost point $p_0$ on the $x$-axis of $\partial B \cap (\mathbb{R} \times 0)$ by a simple (p.l) arc $a_0$ in $B \backslash \cup_i r(x_i)$ having only transversal intersections with the $a_{i,j,k,l}$.  Similarly, we connect $x_{i_1,j_1}$ to the rightmost point $p_1$ on the $x$-axis of $\partial B \cap (\mathbb{R} \times \{0\})$ by a simple (p.l.) arc $a_1$ in $B \backslash \cup_i r(x_i)$. All transversal intersections of the $a_{i,j,k,l}$, $a_0$, and $a_1$ are marked as virtual crossings. Let $K_{\tau}$ be the long virtual knot consisting of these virtual crossings, the classical crossings in $\cup_i r(x_i)$, the arcs $a_{i,j,k,l}$, the arc $a_0$, the arc $a_1$, the interval $(-\infty,p_0)$, and the interval $(p_1,\infty)$. Define $\mathscr{F}^{-1}_0(\Sigma,\tau)=K_{\tau}$. The following theorem shows that this is well-defined on stability classes of long knot diagrams of surfaces.
 
\begin{proposition} \label{oto_diag_on_surf} $\mathscr{F}_0$ descends to a one-to-one correspondence $\mathscr{F}_0:\mathscr{K} \to \mathscr{K}(\mathscr{S})$ between equivalence classes of long virtual knots and stability classes of long knot diagrams on surfaces.
\end{proposition}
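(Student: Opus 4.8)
The plan is to show that $\mathscr{F}_0$ and the assignment $(\Sigma,\tau)\mapsto K_\tau$ descend to mutually inverse maps on equivalence classes. This requires four things: (i) $\mathscr{F}_0(K)$ is, up to stable equivalence, independent of the choices in the band-pass construction (the coordinate neighborhood $U$ of $\infty$, the regular neighborhood $V$, and the way overpassing bands are inserted at virtual crossings); (ii) $K\leftrightharpoons K'$ implies $\mathscr{F}_0(K)\sim_s\mathscr{F}_0(K')$; (iii) $K_\tau$ is, up to Reidemeister and detour equivalence, independent of the choices in the construction of $\mathscr{F}_0^{-1}$ (the retract $\Gamma$, the band decomposition, the labelling $x_{i,1},\dots,x_{i,4}$ of the four points on each crossing disc, the connecting arcs $a_{i,j,k,l}$, $a_0$, $a_1$, and the ball $B$), and $(\Sigma_1,\tau_1)\sim_e(\Sigma_2,\tau_2)$ implies $K_{\tau_1}\leftrightharpoons K_{\tau_2}$; and (iv) the two maps are mutually inverse.

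For (ii) I would argue on the generators one at a time. A classical Reidemeister move on $K$ is supported in a disc of $\mathbb{R}^2$ disjoint from $U$, so it alters the band-pass presentation only locally, and an inspection of the finitely many local pictures shows the two presentations are related by the corresponding Reidemeister move performed on the surface, hence are equal already in $\mathscr{K}(\mathscr{S})$. The detour move is the crucial case: one replaces an embedded arc $\alpha$ of $K$ carrying only virtual crossings by another arc $\alpha'$ with the same endpoints and again carrying only virtual crossings. On band-pass presentations the subsurface lying over a neighborhood of $\alpha$ (resp.\ $\alpha'$) is a disc-with-bands carrying a trivial long tangle, and I would show that after embedding both into a common surface $\Gamma$ (an elementary equivalence) each overpassing band can be isotoped off the others --- the extra room needed for this isotopy is precisely the stabilization built into $\sim_s$. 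The virtual Reidemeister moves and the mixed move are special cases handled by the same argument. In every case one checks that $\partial U$ remains a distinguished boundary component carrying the two endpoints of $\tau_K$; this is immediate since all modifications occur away from $U$.

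For (iii), independence follows because any two systems of connecting arcs in $\mathbb{R}^2\setminus\bigcup_i r(x_i)$ with prescribed endpoints differ by detour moves, and because re-labelling the $x_{i,j}$, enlarging $B$, or changing $\Gamma$ affects $K_\tau$ only by Reidemeister and detour moves; the implication $\sim_e\Rightarrow\leftrightharpoons$ then reduces, since $\mathscr{F}_0^{-1}$ is unchanged when $\Sigma$ is replaced by the retract into which it embeds, to the statement that $\mathscr{F}_0^{-1}$ of a Reidemeister move on a surface is a Reidemeister move on $K_\tau$ followed by detour moves re-routing the virtual connecting arcs. For (iv), both composites are checked directly: in $\mathscr{F}_0^{-1}\circ\mathscr{F}_0(K)$ each cross over a classical crossing is recovered as that classical crossing, each pair of overpassing bands as a virtual crossing, and the redrawn arcs differ from the arcs of $K$ only by detour moves; in $\mathscr{F}_0\circ\mathscr{F}_0^{-1}(\Sigma,\tau)$ the band-pass surface of $K_\tau$ contains a copy of the retract $\Gamma$ carrying $\tau$, so the two are elementary equivalent.

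The main obstacle I expect is the detour-move half of (ii) together with the matching half of (iii): making precise that a detour move corresponds on band-pass surfaces to an isotopy after stabilization, and conversely that an elementary equivalence of surface diagrams is realized on virtual diagrams purely by detour moves. This is the familiar virtual-to-surface dictionary of Carter--Kamada--Saito and Kamada--Kamada, but in the long (boundary) setting it must be re-examined to be certain the distinguished boundary component and the two endpoints behave correctly; once that dictionary is established, the remaining verifications are bookkeeping.
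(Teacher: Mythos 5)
Your proposal is correct and follows essentially the same route as the paper: both reduce to the Carter--Kamada--Saito / Kamada--Kamada dictionary adapted to the long (one- or two-boundary) setting, check that Reidemeister moves are local so $\mathscr{F}_0$ descends, establish choice-independence of $\mathscr{F}_0^{-1}$ via the fact that different placements of virtual connecting arcs differ only by detour moves, and verify the two maps are mutually inverse by construction. The only stylistic difference is that the paper phrases the arc-independence and $\sim_e\Rightarrow\leftrightharpoons$ steps in terms of coinciding Gauss diagrams (citing Goussarov--Polyak--Viro), whereas you argue geometrically on band-pass surfaces; these are equivalent formulations of the same fact.
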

\begin{proof} As in \cite{kamkam}, the map $\mathscr{F}_0$ is well-defined. Since Reidemeister moves are defined within a small ball, the same proof applies to long virtual knots as well.
\newline
\newline
The assignment $\mathscr{F}_0^{-1}(\Sigma,\tau)=K_{\tau}$ is independent of the placement of arcs $a_{i,j,k,l}$, $a_0$, and $a_1$.  Indeed, any two ways to place arcs will produce coinciding Gauss diagrams on $\mathbb{R}$. Hence the resulting long virtual knots will be equivalent by only detour moves \cite{GPV}.  Also using a Gauss diagram argument, we see that if $(\Sigma_1,\tau_1)$ and $(\Sigma_2,\tau_2)$ are elementary equivalent, then $\mathscr{F}_0^{-1}(\Sigma_1,\tau_1)$ and $\mathscr{F}^{-1}_0(\Sigma_2,\tau_2)$ are equivalent by a classical Reidemeister move. Hence $\mathscr{F}_0^{-1}$ is well-defined. 
\newline
\newline
The construction of $\mathscr{F}_0^{-1}$ and $\mathscr{F}_0$ imply that $\mathscr{F}_0 \mathscr{F}^{-1}_0$ and $\mathscr{F}^{-1}_0\mathscr{F}_0$ are both the identity.
\end{proof}

\begin{proposition} \label{oto_thick_surf} There is a one-to-one correspondence $\mathscr{F}: \mathscr{K} \to \mathscr{K}(\mathscr{S})$ between equivalence classes of long virtual knots and stability classes of long knots in thickened surfaces.
\end{proposition}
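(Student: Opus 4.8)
The plan is to construct $\mathscr{F}$ as a composite $\mathscr{F} = \Psi \circ \mathscr{F}_0$, where $\mathscr{F}_0$ is the one-to-one correspondence of Proposition \ref{oto_diag_on_surf} and $\Psi$ identifies stability classes of long knot diagrams on surfaces with stability classes of long knots in thickened surfaces. The map $\Psi$ is the standard ``pushing'' construction: given a long knot diagram $(\Sigma,\tau)$, form a long knot $\hat{\tau} \colon I \to \Sigma \times I$ by placing $\tau$ at height $1/2$, then near each classical crossing pushing the overstrand toward $\Sigma \times \{1\}$ and the understrand toward $\Sigma \times \{0\}$, and finally isotoping the two endpoints vertically into the distinguished annuli in $\mathscr{C}(\Sigma)$. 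Different choices of heights and pushing directions differ by an ambient isotopy of $\Sigma \times I$ fixing $\Sigma \times \partial I$, so $\Psi(\Sigma,\tau) := (\Sigma,\hat{\tau})$ is well-defined up to equivalence of long knots in $\Sigma \times I$.

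First I would check that $\Psi$ respects the equivalence relations. A Reidemeister move on $\Sigma$ is realized inside a small ball, and the induced change of $\hat{\tau}$ is the corresponding planar isotopy of the classical case performed in that ball crossed with $I$; hence equivalent diagrams on a fixed $\Sigma$ give equivalent long knots in $\Sigma \times I$. An orientation preserving embedding $g\colon \Sigma_1 \hookrightarrow \Gamma$ of surfaces carrying distinguished boundary components to distinguished boundary components induces $g \times \text{id}\colon \Sigma_1 \times I \hookrightarrow \Gamma \times I$, and the closure of the complement of a regular neighborhood of $\text{im}(g \times \text{id})$ is built from vertically proper discs and annuli disjoint from the knot; thus $\Gamma \times I$ is obtained from $\Sigma_1 \times I$ by a sequence of stabilizations. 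Consequently an elementary equivalence of diagrams induces a stable equivalence of long knots in thickened surfaces, so $\Psi$ descends to a well-defined map on stability classes.

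Next I would build the inverse. Given a long knot $(\Sigma,\tau)$ in $\Sigma \times I$, put $\tau$ in general position with respect to the projection $p\colon \Sigma \times I \to \Sigma$ so that $p \circ \tau$ is a generic immersion with finitely many transverse double points, record at each double point which strand has the larger $I$-coordinate, and isotope the endpoints onto $\partial \Sigma$; this yields a long knot diagram $D(\Sigma,\tau)$ on $\Sigma$. Two general-position representatives of the same long knot in $\Sigma \times I$ are joined by a generic ambient isotopy, which projects to a finite sequence of Reidemeister moves and planar isotopies on $\Sigma$ (the usual ``movie'' decomposition), while an o.p. homeomorphism of $\Sigma$ fixing the boundary components sends diagrams to diagrams; moreover cutting along a vertically proper disc or annulus disjoint from $\text{im}(\tau)$ exhibits $D(\Sigma,\tau)$ and $D(\Sigma',\tau')$ as elementary equivalent, via the subsurface spanned by $\text{im}(\tau)$ together with the distinguished boundary. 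Hence $D$ descends to a map on stability classes, and $\Psi$ and $D$ are mutually inverse: projecting a pushed-off diagram recovers the original diagram up to Reidemeister moves, and pushing off a generic projection recovers the original long knot up to isotopy. Setting $\mathscr{F} = \Psi \circ \mathscr{F}_0$ then gives the desired correspondence.

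The hard part will be the general-position/movie argument: one must verify that a generic isotopy of long knots in $\Sigma \times I$ projects to a legitimate finite sequence of Reidemeister moves, controlling the endpoints as they move within the distinguished annuli and ruling out degenerate catastrophes, and that the destabilization moves on thickened surfaces correspond \emph{exactly} --- not merely up to further stabilization --- to the elementary-equivalence relation generated by subsurface embeddings. The subtlety about distinguished boundary components (a disc destabilization may merge, split, or preserve them) must be tracked throughout so that $\mathscr{S}$ means the same thing on both sides. These are the standard arguments of \cite{kamkam,CKS,kuperberg} adapted to the long, bounded-surface setting, so I expect them to go through, but with nontrivial bookkeeping.
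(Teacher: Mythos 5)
Your argument is the standard one and matches the paper's approach; the paper's own proof simply defers to the literature, stating that the result ``follows by a nearly identical argument to the case of virtual knots'' and citing the analogous construction for closed virtual knots, whereas you spell out the pushing map $\Psi$, its inverse via generic projection, and the checks that both descend to stability classes. Your identification of the genuine technical points --- the general-position/movie argument and the bookkeeping of distinguished boundary components under disc destabilization --- is accurate, and those are exactly the details the paper elides by citation.
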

\begin{proof} This follows by a nearly identical argument to the case of virtual knots, so the proof is omitted (see, for example \cite{roots}).
\end{proof}

\subsection{Decompositions, Correspondence of Decompositions} \label{sec_decomp_corr} In the previous section, it was proved that there is a one-to-one correspondence $\mathscr{F}: \mathscr{K} \to \mathscr{K}(\mathscr{S})$ between long virtual knots and long knots in thickened surfaces modulo stabilization. It is now shown that if $\mathscr{F}^{-1}(\Sigma,\tau)$ is a linearly prime long virtual knot and $(\Sigma,\tau)=(\Sigma_1\#\Sigma_2,\tau_1\#\tau_2)$, then either $\mathscr{F}^{-1}(\Sigma_1,\tau_1)$ is the long trivial knot or $\mathscr{F}^{-1}(\Sigma_2,\tau_2)$ is long trivial knot. This section completes the proof of Lemma \ref{prime_corr}. We first need some definitions.
\newline
\newline
A \emph{decomposition} of a long virtual knot $K$, is a pair of long virtual knots $K_1$,$K_2$ such that $K \leftrightharpoons K_1 \# K_2$. $K_1$ is called the \emph{left part} of the decomposition and $K_2$ is called the \emph{right part} of the decomposition.
\newline
\newline
Let $(\Sigma,\tau)$ be a long knot in thickened surface.  Let $S$ be a vertically proper embedded disc $I \times I$ or annulus $S^1 \times I$. Then $S$ is two-sided in $\Sigma \times I$.  We say that $S$ is a \emph{decomposition surface} if $\tau$ intersects $S$ transversally in exactly one point and cutting $\Sigma \times I$ along $S$ gives exactly two connected components. Cutting $\Sigma \times I$ along $S$ gives two long knots in thickened surfaces $(\Sigma_1,\tau_1)$, $(\Sigma_2,\tau_2)$ such that $\tau(0)=\tau_1(0)$ and $\tau(1)=\tau_2(1)$. We will call $(\Sigma_1,\tau_1)$ the \emph{left part} and $(\Sigma_2,\tau_2)$ the \emph{right part}.
\newline
\newline
The following proposition shows that if a long knot admits a decomposition surface, then there is a destabilization of the long knot to a concatenation. It is thus sufficient to consider only those decompositions which are concatenations.

\begin{proposition} Let $S$ be a decomposition surface of $(\Sigma,\tau)$ having left component $(\Sigma_1,\tau_1)$ and right component $(\Sigma_2,\tau_2)$. Then $(\Sigma,\tau) \sim_s (\Sigma_1 \# \Sigma_2,\tau_1\#\tau_2)$.
\end{proposition}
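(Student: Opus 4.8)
The plan is to treat separately the two possible shapes of $S$. If $S$ is a disc, then there is nothing to prove: $S$ is a vertically proper disc meeting $\text{im}(\tau)$ transversally in one point, cutting $\Sigma \times I$ along $S$ produces exactly the two products $\Sigma_1 \times I$ and $\Sigma_2 \times I$, and the inclusions of these products into $\Sigma \times I$ exhibit $S$ as a surface defining a concatenation. After reparametrizing $\tau$ (which is realized by a (p.l.) ambient isotopy) one gets $(\Sigma,\tau) = (\Sigma_1 \# \Sigma_2, \tau_1 \# \tau_2)$ outright, and in particular $(\Sigma,\tau) \sim_s (\Sigma_1 \# \Sigma_2, \tau_1 \# \tau_2)$.

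The substance of the proposition is the case in which $S$ is an annulus, and here the plan is to turn the ``circle gluing'' recorded by $S$ into the ``arc gluing'' demanded by a concatenation by means of a single inessential destabilization. First I would put $S$ in standard position --- its two boundary circles lying in the interiors of $\Sigma \times \{0\}$ and $\Sigma \times \{1\}$, the unique point $p \in \text{im}(\tau) \cap S$ lying in $\text{int}(S)$, and $S$ carrying a product neighborhood $N \cong C \times I \times [-1,1]$ in which $\text{im}(\tau) \cap N = \{c_0\} \times I \times [-1,1]$ --- and record the induced decomposition $\Sigma = \Sigma_1 \cup_C \Sigma_2$ along the separating circle $C := S \cap (\Sigma \times \{1\})$, where $c_0 \in C$. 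Next, pick a short subarc $\alpha \subset C$ having $c_0$ in its interior and set $W := \overline{(C \setminus \alpha)} \times I \times [-1,1] \subset N$. Then $W$ is a $3$-ball disjoint from $\text{im}(\tau)$ whose frontier $A$ in $\Sigma \times I$ is a vertically proper annulus disjoint from $\text{im}(\tau)$, and $A$ separates $W$ from the rest of $\Sigma \times I$. Cutting $\Sigma \times I$ along $A$ and discarding $W$ is an inessential destabilization that yields a long knot $(\hat\Sigma,\hat\tau)$ with $\hat\tau = \tau$ and $\hat\Sigma$ obtained from $\Sigma$ by excising the interior of the rectangular disc $\overline{(C \setminus \alpha)} \times [-1,1]$ in $\Sigma \times \{1\}$; consequently $(\Sigma,\tau) \sim_s (\hat\Sigma,\hat\tau)$.

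It remains to identify $(\hat\Sigma,\hat\tau)$ with the concatenation $(\Sigma_1 \# \Sigma_2, \tau_1 \# \tau_2)$. In $\hat\Sigma$ the part of $C$ that survives is exactly $\alpha$, now an arc properly embedded with endpoints on the newly created boundary circle and with $c_0$ in its interior, so $R := \alpha \times I \times \{0\} \subset N$ is a vertically proper disc in $\hat\Sigma \times I$ meeting $\hat\tau$ transversally in the single point $p$. Because $C$ separates $\Sigma$ into $\Sigma_1$ and $\Sigma_2$ and $W$ was carved out of a neighborhood of the complementary arc $\overline{C \setminus \alpha}$, the arc $\alpha$ separates $\hat\Sigma$ into two subsurfaces; and since deleting a half-disc neighborhood of a boundary arc of a surface is a homeomorphism, these two subsurfaces, together with the subarcs into which $\hat\tau$ is divided by $R$, are equivalent to $(\Sigma_1,\tau_1)$ and $(\Sigma_2,\tau_2)$. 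Hence $R$ is a surface defining a concatenation whose left and right parts are $(\Sigma_1,\tau_1)$ and $(\Sigma_2,\tau_2)$, so $(\hat\Sigma,\hat\tau) = (\Sigma_1 \# \Sigma_2, \tau_1 \# \tau_2)$ and the proposition follows. I expect the main obstacle to be exactly this final identification: one needs not just that the two pieces are \emph{stably} equivalent to the given $(\Sigma_1,\tau_1)$ and $(\Sigma_2,\tau_2)$, but that they \emph{are} these long knots up to equivalence --- so that what is produced is genuinely a concatenation of the prescribed pieces rather than of some stabilizations of them --- and the point that makes this work is that excising a boundary-parallel half-disc disjoint from the knot is a homeomorphism of pairs.
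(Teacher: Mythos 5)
Your proof is correct and follows essentially the same route as the paper: both put a bicollar on the annulus $S$, carve out the $3$-ball lying over the portion of $S$ away from the unique point of $\text{im}(\tau)\cap S$, destabilize along the annular frontier of that ball (an inessential destabilization), and observe that the surviving piece of $S$ is a vertically proper disc realizing the concatenation $(\Sigma_1\#\Sigma_2,\tau_1\#\tau_2)$. One small notational slip: $\text{im}(\tau)\cap N$ should be the single transversal arc $\{c_0\}\times\{t_0\}\times[-1,1]$ rather than the rectangle $\{c_0\}\times I\times[-1,1]$ you wrote, and arranging this (as the paper does by shrinking the bicollar to $[-\varepsilon,\varepsilon]$) is what guarantees $W$ misses $\text{im}(\tau)$; with that correction everything goes through.
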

\begin{proof} If $S$ is a disc $I \times I$, then the proposition follows immediately from the definition of concatenation. We suppose that $S$ is an annulus $S^1 \times I$. Since $S$ is two-sided, there is a neighborhood $N$ of $S$ homeomorphic to $S \times [-1 ,1]$ such that $S$ identified with $S \times \{0\}$. Take a closed sector $A$ of $S$ containing the intersection with $\text{im}(\tau)$. Choose an $\varepsilon$, $0<\varepsilon\le 1$, such that the subset $V \subset N$ which is homeomorphic to $\text{cl}(S\backslash A) \times [-\varepsilon,\varepsilon]$ has no intersections with $\text{im}(\tau)$. It follows that $\text{cl}(\partial V \backslash (\Sigma \times \partial I))$ is an annulus $W$ having no intersections with $\tau$. Destabilize along $W$ to get $(\Sigma',\tau')$. Then $A \approx I \times I$ is a decomposition surface of $(\Sigma',\tau')$ and $(\Sigma',\tau')=(\Sigma_1 \# \Sigma_2,\tau_1\#\tau_2)$. This proves the proposition (see Figure \ref{all_tang_ops}).   
\end{proof}

\begin{figure}[h]
\[
\begin{array}{ccc}
\scalebox{.5}{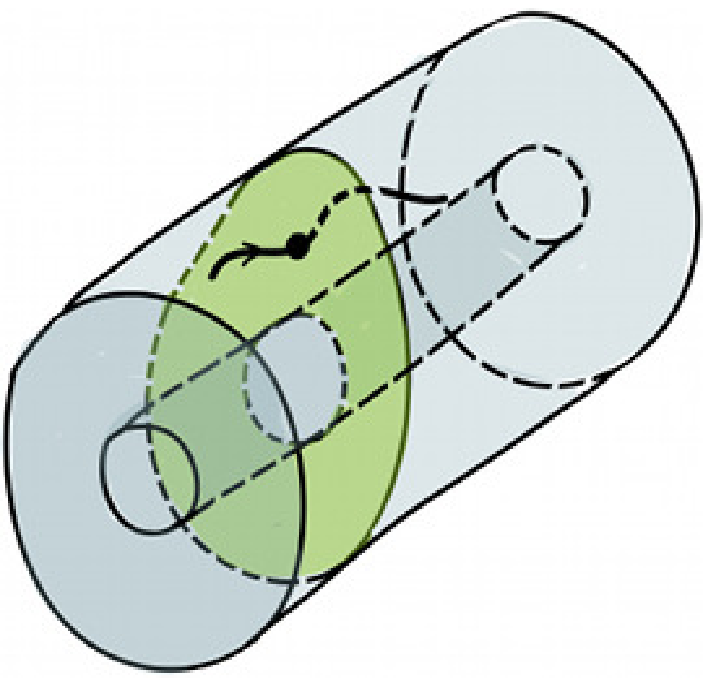} & \scalebox{.4}{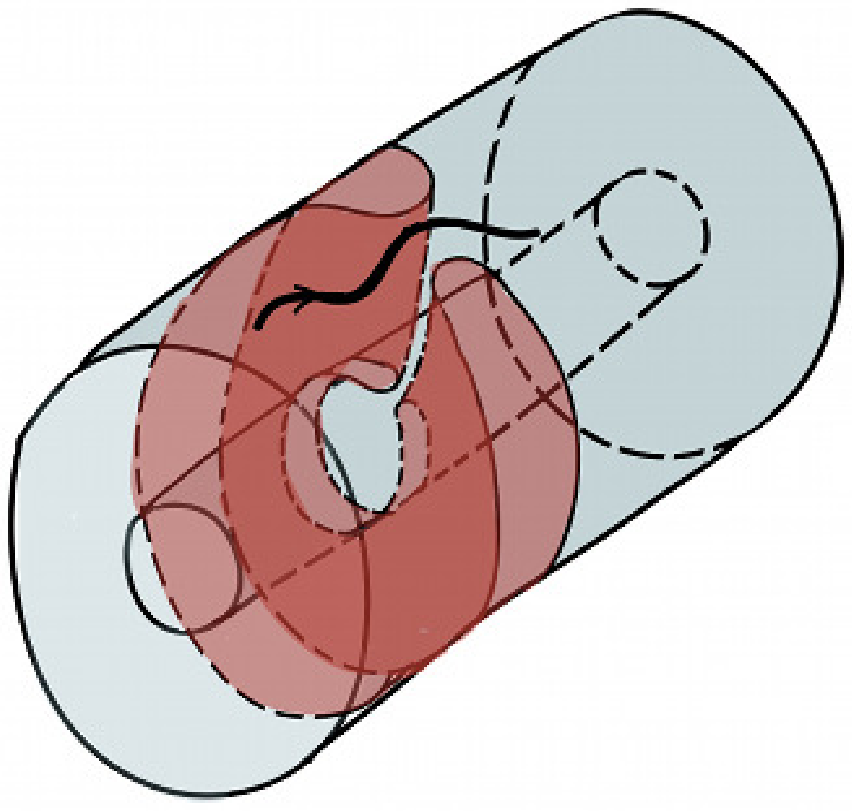} & 
\scalebox{.5}{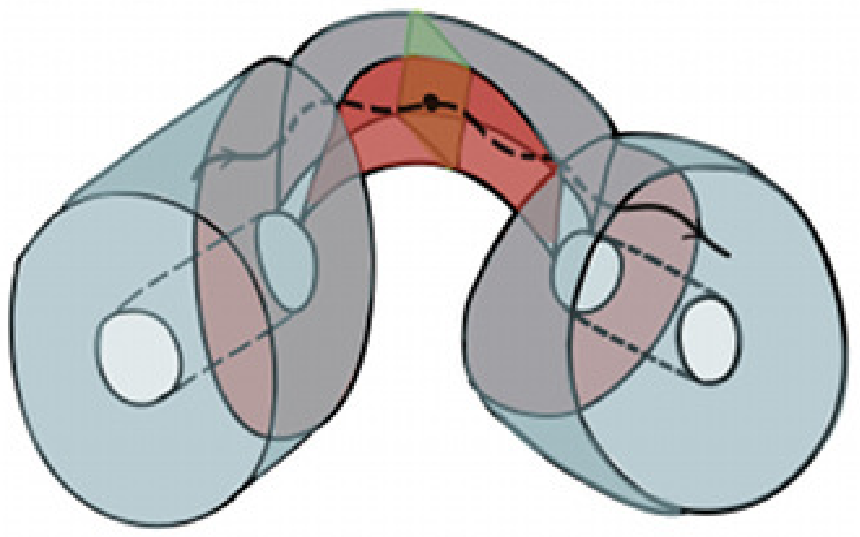}\\
\text{An annular decomposition.} & \text{A destabilization.} & \text{A concatention.} \\ 
\end{array}
\]
\caption{Every annular decomposition can be represented as a concatenation of long knots on thickened surfaces.} \label{all_tang_ops}
\end{figure}

\begin{proposition} \label{corr_decomp_thm} Suppose that $(\Sigma,\tau)$ is a long knot in a thickened surface and $(\Sigma,\tau)=(\Sigma_1\#\Sigma_2,\tau_1\#\tau_2)$. If $\mathscr{F}^{-1}(\Sigma,\tau)=K$ and $\mathscr{F}^{-1}(\Sigma_i,\tau_i)=K_i$ for $i=1,2$, then $K \leftrightharpoons K_1 \# K_2$.	
\end{proposition}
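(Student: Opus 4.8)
The plan is to reduce everything to a single, carefully chosen band--pass computation of $\mathscr{F}^{-1}(\Sigma,\tau)$. Recall that by Propositions \ref{oto_diag_on_surf} and \ref{oto_thick_surf} the map $\mathscr{F}^{-1}$ is well defined on stability classes, and that within a fixed class the output of the construction is independent of the placement of the connecting arcs $a_{i,j,k,l},a_0,a_1$ up to detour moves (and independent of the choice of regular neighborhood up to elementary equivalence). So we are free to compute $\mathscr{F}^{-1}(\Sigma,\tau)$, $\mathscr{F}^{-1}(\Sigma_1,\tau_1)$, $\mathscr{F}^{-1}(\Sigma_2,\tau_2)$ with any convenient choice of auxiliary data. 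It therefore suffices to exhibit \emph{one} band--pass computation of $\mathscr{F}^{-1}(\Sigma,\tau)$ producing a long virtual knot diagram $K_\tau$ which visibly splits as a concatenation $K_\tau=K_{\tau_1}\#K_{\tau_2}$, where $K_{\tau_i}$ is a band--pass computation of $\mathscr{F}^{-1}(\Sigma_i,\tau_i)$ (hence $K_{\tau_i}\leftrightharpoons K_i$). Granting this, and using the standard fact that $\#$ descends to $\leftrightharpoons$-classes in each argument — a Reidemeister or detour move performed on one factor stays inside a small ball, hence inside its half--plane — we conclude $K\leftrightharpoons K_\tau=K_{\tau_1}\#K_{\tau_2}\leftrightharpoons K_1\#K_2$.

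First I would unpack the hypothesis $(\Sigma,\tau)=(\Sigma_1\#\Sigma_2,\tau_1\#\tau_2)$: there is a vertically proper disc $R$ with embeddings $i_1,i_2$ satisfying $\mathrm{im}(i_1)\cup\mathrm{im}(i_2)=\Sigma\times I$, $\mathrm{im}(i_1)\cap\mathrm{im}(i_2)=R$, and $\mathrm{im}(\tau)$ meets $R$ transversally in the single point $\tau(t_R)$; cutting $\Sigma\times I$ along $R$ returns $(\Sigma_1,\tau_1)$ and $(\Sigma_2,\tau_2)$ with $\tau_1(1)=\tau_2(0)$ the point over $\tau(t_R)$, lying in a distinguished boundary annulus of each $\Sigma_i$. (Alternatively one may first pass to a long knot diagram $\bar\tau$ on $\Sigma$ via Proposition \ref{oto_thick_surf} and work with the separating arc of $\Sigma$ traced out by $R$.) Next I choose the regular neighborhood $\Gamma$ of $\mathrm{im}(\tau)$ together with the distinguished boundary components so that $\Gamma\cap R$ is a single small vertically proper sub-disc $\delta\subset R$ containing $\tau(t_R)$. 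Then $\Gamma_i:=\Gamma\cap\mathrm{im}(i_i)$ is a regular neighborhood of $\mathrm{im}(\tau_i)$ together with the distinguished boundary components of $\Sigma_i$, and $\Gamma=\Gamma_1\cup_\delta\Gamma_2$; in the associated band decomposition $\delta$ is precisely the boundary band carrying $\tau_1(1)$ for $\Gamma_1$, glued to the boundary band carrying $\tau_2(0)$ for $\Gamma_2$, while in $\Gamma$ it is an ordinary band of $\mathrm{im}(\tau)$.

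Then I would embed $\Gamma$ in $\mathbb{R}^3=\mathbb{R}^2\times\mathbb{R}$ so that $\Gamma_1$ lands in the region $x<0$, $\Gamma_2$ in $x>0$, all crossing discs in $\mathbb{R}^2\times\{0\}$, and $\delta$ meets the plane $x=0$ in a single band, along which $\mathrm{im}(\tau)$ crosses $x=0$ exactly once. Running the recipe for $\mathscr{F}^{-1}$, every connecting arc $a_{i,j,k,l}$ for a band of $\Gamma_1$ (resp. $\Gamma_2$) can be drawn inside $x<0$ (resp. $x>0$), the arc $a_0$ leaving the leftmost point inside $x<0$ and the arc $a_1$ entering the rightmost point inside $x>0$, so that the only feature meeting $x=0$ is the one band $\delta$. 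Reading off the Gauss data, the part of the resulting diagram $K_\tau$ in $\{x\le 0\}$ coincides with the diagram obtained by running $\mathscr{F}^{-1}$ on $(\Sigma_1,\tau_1)$, with its right tail rerouted to $x=0$, and symmetrically on the right; up to the reparametrizing homeomorphisms $f_L,f_R$ of Definition \ref{defn_concat_virtual} (which do not change the long virtual knot) and a few detour moves tidying the arcs near $x=0$, this exhibits $K_\tau=K_{\tau_1}\#K_{\tau_2}$, completing the reduction.

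I expect the main obstacle to be the bookkeeping of distinguished boundary components under cutting along $R$: one must verify that $\Gamma_i$ is genuinely a regular neighborhood of $\mathrm{im}(\tau_i)$ \emph{together with the distinguished boundary components prescribed by the concatenation}, since the $\mathscr{F}^{-1}$-construction handles the endpoints $\tau_i(0),\tau_i(1)$ through the boundary--dependent arcs $a_0,a_1$, and since $\mathrm{num}(\Sigma_i)$ need not equal $\mathrm{num}(\Sigma)$. Checking this, along with the claim that the full system of connecting arcs can be confined to the two half--spaces (a careless embedding could force an arc of $\Gamma_1$ to wander into $\{x>0\}$ and destroy the split), is the only delicate point; the remainder is the Gauss--diagram and detour--move reasoning already used in Proposition \ref{oto_diag_on_surf}.
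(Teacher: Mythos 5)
Your proposal follows essentially the same route as the paper: both proofs exploit the freedom in the band--pass construction (guaranteed by Propositions \ref{oto_diag_on_surf} and \ref{oto_thick_surf}) to choose auxiliary data so that the diagram for $(\Sigma_1,\tau_1)$ lands in $\mathbb{H}_L$ and the diagram for $(\Sigma_2,\tau_2)$ lands in $\mathbb{H}_R$, making the concatenation visible. Your version is more explicit about the regular neighborhood $\Gamma=\Gamma_1\cup_\delta\Gamma_2$ and the bookkeeping of distinguished boundary components, which the paper's terser argument leaves implicit, but this is elaboration rather than a different method.
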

\begin{proof} By Theorem \ref{oto_thick_surf}, we may consider $(\Sigma,\tau)$ and $(\Sigma_i,\tau_i)$ as long knot diagrams on surfaces. Consider the construction of $K_{\tau}\leftrightharpoons K$. By Theorem \ref{oto_diag_on_surf}, all ways to make choices regarding placement of arcs and crossings give equivalent long virtual knots. Therefore, we may make the following choices. On $\mathbb{H}_L$, make the long virtual knot diagram corresponding to $f_L(\mathscr{F}_0^{-1}(\Sigma_1,\tau_1))$ (see Definition \ref{defn_concat_virtual}).  On $\mathbb{H}_R$, make the long virtual knot diagram corresponding to $f_R(\mathscr{F}_0^{-1}(\Sigma_2,\tau_2))$.  Adding in the point at the origin, we see that we have a long virtual knot diagram which is equivalent to both $K_{\tau}$ and $K_1 \# K_2$. 
\end{proof}

\begin{corollary}\label{prim_corr_fin} Let $(\Sigma,\tau)$ be a long knot in a thickened surface and suppose that $\mathscr{F}^{-1}(\Sigma,\tau)=K$ is linearly prime. If $(\Sigma,\tau)=(\Sigma_1\#\Sigma_2,\tau_1\#\tau_2)$, then either $\mathscr{F}^{-1}(\Sigma_1,\tau_1)\leftrightharpoons \to $ or $\mathscr{F}^{-1}(\Sigma_2,\tau_2)\leftrightharpoons \to$. 
\end{corollary}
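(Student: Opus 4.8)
The plan is to read the corollary off directly from Proposition~\ref{corr_decomp_thm} together with the definition of linear primality; no further geometric input is needed. First I would set $K_i := \mathscr{F}^{-1}(\Sigma_i,\tau_i)$ for $i=1,2$, noting that these long virtual knots are well defined because $\mathscr{F}^{-1}$ is a function on stable equivalence classes (Proposition~\ref{oto_thick_surf}) and each concrete $(\Sigma_i,\tau_i)$ determines such a class. Since by hypothesis $(\Sigma,\tau)=(\Sigma_1\#\Sigma_2,\tau_1\#\tau_2)$ is a concatenation of long knots in thickened surfaces, Proposition~\ref{corr_decomp_thm} applies verbatim and yields $K \leftrightharpoons K_1\#K_2$ in $\mathscr{K}$, where $K=\mathscr{F}^{-1}(\Sigma,\tau)$.

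Next I would invoke the hypothesis that $K$ is linearly prime. By definition this means $K$ is not the long trivial knot and that every decomposition $K\leftrightharpoons K_1'\#K_2'$ has a trivial factor, i.e.\ $K_1'\leftrightharpoons \to$ or $K_2'\leftrightharpoons \to$. Applying this to the particular decomposition $K\leftrightharpoons K_1\#K_2$ produced in the previous step gives $K_1\leftrightharpoons \to$ or $K_2\leftrightharpoons \to$, which is precisely the assertion $\mathscr{F}^{-1}(\Sigma_1,\tau_1)\leftrightharpoons \to$ or $\mathscr{F}^{-1}(\Sigma_2,\tau_2)\leftrightharpoons \to$. Since Lemma~\ref{prime_corr} asserts exactly the existence of the correspondence $\mathscr{F}$ (supplied by Proposition~\ref{oto_thick_surf}) together with this primality statement, the corollary completes the proof of Lemma~\ref{prime_corr}.

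There is essentially no obstacle: the substantive work has already been done in establishing the one-to-one correspondence $\mathscr{F}$ (Propositions~\ref{oto_diag_on_surf} and~\ref{oto_thick_surf}) and in Proposition~\ref{corr_decomp_thm}, which shows that $\mathscr{F}$ carries concatenations of long knots in thickened surfaces to concatenations of long virtual knots. The one point I would check with care is the compatibility of representatives: Proposition~\ref{corr_decomp_thm} is stated for a fixed concatenation with prescribed summands $(\Sigma_1,\tau_1)$, $(\Sigma_2,\tau_2)$, and one should confirm that the $K_i$ occurring there are indeed $\mathscr{F}^{-1}$ of the stable classes of those summands — which is immediate from the construction of $\mathscr{F}^{-1}$ via $\mathscr{F}_0^{-1}$ in Section~\ref{oto_corres} and the freedom, guaranteed by Proposition~\ref{oto_diag_on_surf}, to choose the arc and crossing placements so that the diagram on $\mathbb{H}_L$ (resp.\ $\mathbb{H}_R$) is $f_L(\mathscr{F}_0^{-1}(\Sigma_1,\tau_1))$ (resp.\ $f_R(\mathscr{F}_0^{-1}(\Sigma_2,\tau_2))$).
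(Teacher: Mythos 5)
Your proof is correct and follows the same route as the paper: set $K_i = \mathscr{F}^{-1}(\Sigma_i,\tau_i)$, apply Proposition~\ref{corr_decomp_thm} to get $K \leftrightharpoons K_1 \# K_2$, and conclude from the definition of linear primality. The additional remarks about representative-compatibility are a reasonable expansion but do not alter the argument.
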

\begin{proof} Suppose that $\mathscr{F}^{-1}(\Sigma_i,\tau_i)=K_i$ for $i=1,2$.  By Theorem \ref{corr_decomp_thm}, $K \leftrightharpoons K_1 \# K_2$. It follows that either $K_1 \leftrightharpoons \to$ or $K_2 \leftrightharpoons \to$.  
\end{proof}

\noindent The establishment of Corollary \ref{prim_corr_fin} completes the proof of Lemma \ref{prime_corr} from Section 1.3: Statement of Main Results.\hfill$\square$

\section{Geometric Constructions} \label{sec_geom_construct} 

\noindent In this section we define some geometric constructions which will be used in the proofs of Lemma \ref{main_1}, Lemma \ref{main_2}, and Theorem \ref{main_3}. After the short subsection \ref{prelim_obs}, the remaining subsections may be read independently of one another. The discussion is along the lines of traditional geometric topology, as in \cite{hempel}. In Section \ref{sec_bead}, we discuss the bead-on-string isotopy. It is the long knot in $\Sigma \times I$ analog of Conway's proof that classical long knots commute. In Section \ref{sec_vert_comps}, we briefly discuss a lemma which gives sufficient conditions for a vertically proper embedded disc to intersect exactly one component of $(\partial \Sigma) \times I$. In Section \ref{sec_bunches}, we prove some lemmas for reducing the number of connected components in the intersection of two vertically proper embedded surfaces.

\subsection{Preliminary Observations} \label{prelim_obs} Let $\Sigma\in\mathscr{S}$ and consider $M=\Sigma\times I$ as a $3$-manifold. Since $\Sigma \ne S^2$, it follows that $M$ is irreducible. That is, any $2$-sphere in $M$ bounds a $3$-ball. However, we note that $M$ will generally not be boundary irreducible. 

\subsection{Bead-On-A-String Isotopy} \label{sec_bead} The following lemma is the version of Conway's proof of the commutativity of classical knots which applies to long knots in thickened surfaces. It is used frequently in the proofs of Lemma \ref{main_2} and Theorem \ref{main_3}.

\begin{lemma}[Bead-On-A-String Isotopy] \label{beadonastring}Let $\Sigma \in \mathscr{S}$ and let $K$ be a long knot in $\Sigma \times I$.  Let $N$ be any closed regular neighborhood of a distinguished boundary component of $\Sigma$ and let $T$ be any closed regular neighborhood of $K$.  Suppose that there is a closed $3$-ball $B$ in $\Sigma \times I$ such that $\partial B$ has exactly two intersections with $K$, both of which are transversal. Then there is a (p.l) ambient isotopy $F:(\Sigma \times I) \times I \to \Sigma \times I$ of $K$ such that $F(B,1) \subset (N \times I) \cap T$ and such that $F_t|_{\text{cl}((B\cup T)^c)}$ is the indentity.
\end{lemma}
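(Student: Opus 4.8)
The plan is to reduce to a standard "finger move along the knot" argument in the PL category, carried out inside the complement of the given data. First I would set up the local picture: the $3$-ball $B$ meets $K$ transversally in two points, so $\partial B \cap K$ consists of two points and $K \cap B$ is a single properly embedded arc $\alpha$ in $B$, while the rest of $K$ consists of two arcs exiting $B$ and eventually hitting $\Sigma \times \partial I$ at the two endpoints $\tau(0),\tau(1)$. Since $B$ is a ball, $\alpha$ is unknotted rel endpoints in $B$; I would choose coordinates so that $B$ is a round ball, $\alpha$ is a diameter, and a tubular neighborhood of $\alpha$ in $B$ is a standard cylinder $D^2 \times [-1,1]$ meeting $\partial B$ in $D^2 \times \{\pm 1\}$. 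The portion of $K$ outside $B$, say the "outgoing" arc $\beta$ joining $\partial B$ to one distinguished boundary component, together with a regular neighborhood, is a PL arc in $\Sigma \times I$; I would regard $N \times I$ (which deformation retracts to an annulus in $\partial(\Sigma \times I)$, hence is a collar of a boundary annulus) as the "far end" along $\beta$ that we wish to drag $B$ toward.

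The core step is a PL ambient isotopy supported in a regular neighborhood $U$ of $B \cup \beta$ that slides $B$ along $\beta$. Concretely, $B \cup \beta$ is a "ball with a whisker," whose regular neighborhood $U$ is again a $3$-ball; inside $U$ one has the standard model of a ball-with-arc, and there is an obvious PL isotopy of $U$ (rel $\partial U$) pushing the ball end of the model down the whisker. Under this isotopy the sub-arc $\alpha$ of $K$ inside $B$ is carried along (it stays inside the moving copy of $B$), while $\beta$ is progressively "absorbed": the image of $K$ remains embedded because the isotopy is ambient and $\beta$ carried $B$ monotonically toward the boundary. I would choose $\beta$ to be exactly the sub-arc of $K$ running from $\partial B$ to the end $\tau(i)$ lying in the distinguished boundary component $C$ that $N$ neighbors, and I would arrange that $U \subset \operatorname{cl}((B \cup T)^c) \cup B \cup T$ is disjoint from the rest of $K$ — this is possible by taking $T$ as the prescribed regular neighborhood of $K$ and $U$ a sufficiently thin neighborhood of $B \cup \beta$ inside $B \cup T$. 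Running this isotopy until the moved ball $F(B,1)$ sits inside the collar $N \times I$ and inside $T$ gives $F(B,1) \subset (N \times I) \cap T$, and since the whole isotopy is supported in $B \cup T$ (indeed in $U$), $F_t$ is the identity on $\operatorname{cl}((B \cup T)^c)$, as required. To get $F$ defined on all of $(\Sigma \times I) \times I$ rather than just on $U$, extend by the identity; the PL ambient isotopy extension theorem guarantees this extension exists and is ambient.

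The main obstacle is making the "slide along $\beta$" rigorous as a genuine PL ambient isotopy rather than just an isotopy of the arc $K$: one must verify that the regular neighborhood $U$ of $B \cup \beta$ really is a standard $3$-ball in which the ball-with-whisker sits in standard position, and that the slide can be taken to fix $\partial U$ so it extends by the identity without meeting $\Sigma \times \partial I$ prematurely or colliding with the other strand of $K$ (the one ending at $\tau(1-i)$). This is where irreducibility of $M = \Sigma \times I$ (noted in Section \ref{prelim_obs}) and the PL regular neighborhood theory are used: $B \cup \beta$ collapses to a point, so its regular neighborhood $U$ is a PL $3$-ball, and uniqueness of regular neighborhoods lets us normalize the pair $(U, B \cup \beta)$ to the standard model. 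Once the model is standard, the slide is explicit and the needed containment and support statements are immediate; everything else is routine general-position bookkeeping to keep $U$ inside $B \cup T$ and disjoint from $K \setminus (\alpha \cup \beta)$.
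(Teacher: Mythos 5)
Your proposal implements the same underlying idea as the paper (Conway's bead-on-a-string move realized as a PL ambient isotopy supported in $B\cup T$), but executes it by a genuinely different decomposition. The paper's sketch factors the isotopy into two pieces: first an isotopy supported in $B$ which shrinks $B$ into $T$ (so that the moving ball now sits inside the tube $T$), and then an isotopy supported in $T$, viewed as a solid cylinder $D^2\times I$, which slides the shrunken ball to $(N\times I)\cap T$. This factoring has the advantage that the sliding step becomes a completely standard fact about isotopies of a solid cylinder rel its boundary. You instead perform a single slide of $B$ along the whisker $\beta$ inside a regular neighborhood $U$ of $B\cup\beta$ chosen inside $B\cup T$, extending by the identity. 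This is cleaner in that it avoids naming the two separate stages, but it pushes all the work into verifying the ``ball-with-whisker'' normal form of $(U,B\cup\beta)$ and that the slide can be done rel $\partial U$; both are legitimate PL regular-neighborhood arguments, so the approach is sound.

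Two small corrections. First, you cite irreducibility of $\Sigma\times I$ as the reason $U$ is a PL $3$-ball; irreducibility is not what is used here. The relevant fact is that $B\cup\beta$ collapses to a point, so its regular neighborhood is a PL $3$-ball by regular neighborhood theory --- irreducibility (every $2$-sphere bounds a $3$-ball) plays no role in this lemma, and is invoked in the paper only in the bunch-reduction lemmas. Second, you cannot literally arrange $U$ to be disjoint from $K\setminus(\alpha\cup\beta)$: the other strand of $K$ exits $\partial B$ transversally at a point, so any neighborhood of $B$ meets it in a small arc. This is harmless (the isotopy supported in $U$ drags a short piece of that strand, fixing it on $\partial U$, and the image remains embedded since each $F_t$ is a homeomorphism), but it should be acknowledged as a feature of the slide rather than avoided by a general-position claim that is false as stated.
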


\begin{proof}(Sketch) First note that there is a (p.l) ambient isotopy of $\Sigma \times I$ which sends $K \cap B$ into $B \cap T$ and which is the identity outside of $B$.  We may then consider $T$ as an embedded solid cylinder $D^2 \times I$ in $\Sigma \times I$ containing a long classical knot $K'$. Then there is a (p.l.) ambient isotopy of the solid cylinder which is the identity on the boundary of the cylinder and sends $T \cap B \cap K'$ into $(N \times I) \cap T$. Composing these together gives the desired (p.l.)   ambient isotopy.
\end{proof}

\subsection{Vertically Proper Embedded discs and Components of $(\partial \Sigma) \times I$} \label{sec_vert_comps} The opposite sides of a vertically proper embedded disc $R\approx I \times I$ in $\Sigma \times I$, may intersect either one or two components of $(\partial \Sigma) \times I$. The following lemma gives sufficient conditions for the number of components that $R$ intersects to be equal to one. The lemma and its contrapositive are used frequently in the remainder of the text. The proof is elementary and is therefore omitted.

\begin{lemma}\label{lemma_disc_genus_red} Let $\Sigma \in \mathscr{S}$ and let $(\Sigma,\tau)$ be a long knot in $\Sigma \times I$. If $R$ is a vertically proper embedded disc in $\Sigma \times I$ which is disconnecting or genus reducing, then there is unique a connected component $C$ of $(\partial \Sigma) \times I$ such that $R \cap (\partial \Sigma \times I) \subset C$.
\end{lemma}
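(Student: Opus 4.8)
The plan is to analyze the surface $\Sigma'$ obtained by cutting $\Sigma \times I$ along $R$ via the relationship between $R$ and its trace on the top surface $\Sigma \times \{1\}$. First I would reduce to a purely $2$-dimensional statement. Since $R$ is vertically proper with $R \cap (\Sigma \times \{0\}) $ and $R \cap (\Sigma \times \{1\})$ each a properly embedded arc in the respective copy of $\Sigma$, and $R \cap ((\partial \Sigma) \times I)$ consisting of two vertical arcs in $(\partial \Sigma) \times I$, the effect of cutting along $R$ on the top surface is exactly the effect of cutting $\Sigma \times \{1\}$ along the properly embedded arc $\alpha := R \cap (\Sigma \times \{1\})$. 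By the definition of destabilization quoted in the excerpt, $\Sigma'$ is homeomorphic to this cut surface, so I only need to understand how cutting $\Sigma$ along a properly embedded arc $\alpha$ changes the genus and number of components, and whether the endpoints of $\alpha$ lie on one or two boundary components of $\Sigma$.

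The key step is the elementary surface-theory dichotomy: if $\alpha$ is a properly embedded arc in a connected compact orientable surface $\Sigma$ with both endpoints on the \emph{same} boundary component $C$, then $\alpha$ is either separating (in which case $\Sigma \setminus \alpha$ has two components, at least one of which still contains $\text{im}(\tau)$ after we recall $\text{im}(\tau) \cap R = \emptyset$, and neither component can have larger genus — so this is the disconnecting case) or non-separating (in which case $\Sigma$ cut along $\alpha$ is connected with genus dropped by exactly one — the genus reducing case). If instead the endpoints of $\alpha$ lie on two \emph{distinct} boundary components, then $\Sigma$ cut along $\alpha$ is connected, has the same genus, and merges those two boundary components into one; that is, the cut is neither disconnecting nor genus reducing. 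Thus, in the hypotheses of the lemma — $R$ disconnecting or genus reducing — we are forced into the case where $\alpha$ has both endpoints on one and the same boundary component $C$ of $\Sigma$. Since the two vertical arcs of $R \cap ((\partial \Sigma) \times I)$ lie over the two boundary points of $\alpha$, both are contained in $C \times I$, and hence $R \cap ((\partial \Sigma) \times I) \subset C \times I$; uniqueness of $C$ is immediate since a connected arc cannot have its endpoints spread over more than two components, and we have just shown both endpoints lie on $C$.

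I would also note the harmless subtlety that one should check $\text{im}(\tau)$ ends up in the retained component after a disconnecting cut, but this is automatic from the destabilization convention (we discard the components not containing $\text{im}(\tau)$) and plays no role in identifying $C$; the statement of the lemma is about $R$ and $(\partial \Sigma) \times I$ only. The main obstacle, such as it is, is purely bookkeeping: making sure the identification ``cutting $\Sigma \times I$ along $R$'' $=$ ``cutting $\Sigma \times \{1\}$ along $\alpha$'' is set up cleanly, and correctly enumerating the three cases for a properly embedded arc (two endpoints on one component, separating or not; endpoints on two components). Once that case analysis is in place, the conclusion is forced. This is why the authors describe the proof as elementary and omit it.
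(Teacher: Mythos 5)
Your argument is correct. The paper omits its proof as elementary, but the reduction to the trace arc $\alpha = R \cap (\Sigma \times \{1\})$, followed by the standard dichotomy for a properly embedded arc in a compact orientable surface (endpoints on one boundary component $\Rightarrow$ separating or genus-dropping; endpoints on two distinct boundary components $\Rightarrow$ connected, genus-preserving, boundary-merging), is exactly the intended argument, and your Euler-characteristic bookkeeping closes the contrapositive cleanly.
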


\subsection{Bunches} \label{sec_bunches} The proofs of Theorem \ref{main_3} and Lemmas \ref{main_1} and \ref{main_2} all rely upon reducing the number of connected components of intersections of vertically proper embedded discs and annuli (as defined above).  In this section we introduce terminology for the type of intersections that can occur.  In addition, we describe several geometric constructions that will be used throughout.

\begin{definition}[Bunches on a disc] Let $f: I \times I \to R$ be a vertically proper embedded disc in $\Sigma \times I$. Let $e:I \to R$ be a (p.l.) embedded interval.  Then $e$ is \emph{vertical} if the image of one endpoint of $I$ is in $\Sigma \times \{0\}$ and the image of the other endpoint of $I$ is in $\Sigma \times \{1\}$. The interval $e$ is \emph{horizontal} if the image of one endpoint of $I$ is in $f(\{0\} \times I) \subset R$ and the image of the other endpoint of $I$ is in $f(\{1\} \times I) \subset R$. The embedded interval $e$ is \emph{cornered} if the images of the endpoints of $I$ are in non-opposite sides of $R$. If $e$ is cornered, the \emph{corner} $c_e$ of $e$ is the point of intersection of the adjacent sides of $R$ which $e$ intersects. The embedded interval $e$ is \emph{partisan} if its endpoints $e(0) \ne e(1)$ are on the same side of $R$. A \emph{bunch} of embedded intervals is a finite (possibly empty) collection of pairwise non-intersecting embedded intervals which are either all vertical, all horizontal, all cornered, or all partisan. 
\end{definition}

\begin{definition}[Bunches on an Annulus] Let $f:S^1 \times I \to R$ be a vertically proper embedded annulus in $\Sigma \times I$. A (p.l.) embedded interval $e:I \to R$ is said to be \emph{vertical} if $e(0)$ and $e(1)$ are in different components of $\partial R$.  A (p.l.) embedded interval $e:I \to R$ is said to be \emph{partisan} of $e(0)$ and $e(1)$ are in the same component of $\partial R$. A (p.l.) embedded circle $c:S^1 \to R$ is said to be \emph{horizontal} if it does not intersect $\partial R$ and does not bound a disc in $R$. A \emph{bunch} is a set containing exclusively pairwise non-intersecting vertical intervals, exclusively horizontal circles, or exclusively partisan intervals. 
\end{definition}

\begin{remark} We will frequently abuse notation and fail to distinguish between an embedding of an interval $e:I \to R$ and the image of the embedding. In this case, the \emph{endpoints} of $e$ will be taken to mean the points $e(0)$ and $e(1)$ in $R$.
\end{remark}

\begin{definition}[Inessential circles] Let $R$ be a vertically proper embedded annulus or disc.  Let $c:S^1 \to R$ be a (p.l.) embedded circle not intersecting $\partial R$.  Suppose that $\text{im}(c)$ bounds a disc $D_c$ on $R$.  Then $c$ is called an \emph{inessential circle}.
\end{definition}

\subsubsection{Inessential Circles} Let $\Sigma \in \mathscr{S}$. Let $R_1$, $R_2$ be vertically proper embedded surfaces such that for $i=1,2$, $R_i$ is a vertically proper embedded disc or annulus. Suppose that $R_1 \cap R_2$ contains an embedded circle $c$. The following lemmas detail how to decrease the number of connected components of $R_1 \cap R_2$ in the two cases in which inessential circles appear.

\begin{lemma} \label{two_inessential} Suppose $c$ is an inessential circle in both $R_1$ and $R_2$, and let $D_1$, $D_2$ be discs such that $\text{im}(c)=\partial D_1=\partial D_2$ where $D_i \subset R_i$ for $i=1,2$. Suppose that $c$ is innermost in the sense that $D_1 \cup D_2$ contains no connected components of $R_1 \cap R_2$ other than $c$.  Then $D_1 \cup D_2$  is a $2$-sphere which bounds a $3$-ball $B_{12}$ in $\Sigma \times I$. Moreover, for every neighborhood $U$ of $B_{12}$ and $i \in \{1,2\}$, there is a vertically proper embedded surface $R_{i}'$ and an ambient isotopy $F: (\Sigma \times I) \times I \to \Sigma \times I$ such that $F_1(R_{i})=R_i'$, $F_t|_{\Sigma \times I \backslash U}=\text{id}_{\Sigma \times I \backslash U}$ for all $t \in I$, and $R_i' \cap R_{3-i}$ has fewer connected components than $R_1 \cap R_2$.
\end{lemma}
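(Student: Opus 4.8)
The plan is to exploit the irreducibility of $M = \Sigma \times I$ established in Section \ref{prelim_obs}. Since $c$ is innermost, $D_1 \cup D_2$ is an embedded $2$-sphere in $M$ (we must check that $D_1 \cap D_2 = \partial D_1 = \partial D_2 = \mathrm{im}(c)$, which is exactly the innermost hypothesis). Because $M$ is irreducible, this $2$-sphere bounds a $3$-ball $B_{12}$ in $M$. This gives the first assertion.

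For the second assertion, fix $i \in \{1,2\}$; say $i = 1$. The idea is the standard innermost-disc surgery: replace the disc $D_1 \subset R_1$ by a parallel copy of $D_2$ pushed slightly to the far side of $B_{12}$. Concretely, first I would take a product neighborhood $B_{12} \cong D_2 \times [-1,1]$ with $D_2 = D_2 \times \{0\}$, chosen small enough (and inside the given neighborhood $U$ of $B_{12}$) that the only parts of $R_1$ and $R_2$ meeting it are $D_1$ and $D_2$ respectively, and so that $R_2$ meets $D_2 \times [-1,1]$ only in $D_2 \times \{0\}$. Then I would construct an ambient isotopy $F$, supported in $U$, that carries $D_1$ across $B_{12}$ to a disc of the form $D_2 \times \{\varepsilon\}$ for small $\varepsilon > 0$ while leaving $R_1 \setminus D_1$ fixed; since $\mathrm{im}(c)$ is the frontier of $D_1$ in $R_1$, such an isotopy extends to all of $\Sigma \times I$ (it is a finger/whisker move in reverse, sliding a disc through a ball). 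Set $R_1' = F_1(R_1)$; this is again a vertically proper embedded disc or annulus of the same topological type as $R_1$, since $F_1$ is an ambient homeomorphism fixing $\Sigma \times \partial I$ and each $C \times I$ setwise.

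It remains to count components. After the isotopy, $R_1' \cap R_2$ near $B_{12}$ consists of whatever $(D_2 \times \{\varepsilon\}) \cap R_2$ is; by the choice of the product neighborhood $R_2$ meets $D_2 \times [-1,1]$ only at level $0$, so $(D_2 \times \{\varepsilon\}) \cap R_2 = \emptyset$. Thus the component $c$ of $R_1 \cap R_2$ is destroyed and no new intersection components are created inside $U$. Outside $U$ nothing has changed. Hence $R_1' \cap R_2$ has strictly fewer connected components than $R_1 \cap R_2$, as required; the case $i = 2$ is symmetric.

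The main obstacle I anticipate is bookkeeping rather than conceptual: ensuring that the product structure $B_{12} \cong D_2 \times [-1,1]$ can be chosen compatibly with the innermost hypothesis so that $R_1 \cap B_{12} = D_1$, $R_2 \cap B_{12} = D_2$ exactly (no other sheets of $R_1$ or $R_2$ intrude), and that the pushing isotopy stays inside the prescribed neighborhood $U$. One also has to be slightly careful that $R_1'$ remains \emph{vertically proper} --- i.e. that the isotopy does not move the boundary of $R_1$; this is automatic because $F$ is supported in $U \subset \mathrm{int}(\Sigma \times I)$ away from $\partial(\Sigma \times I)$, but it should be stated. These are routine general-position arguments in the p.l.\ category, so I would state them and defer the verification to the reader, as the excerpt does for similar elementary lemmas.
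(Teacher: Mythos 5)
Your proposal is correct and takes essentially the same route as the paper: both first invoke irreducibility of $\Sigma \times I$ to obtain the $3$-ball $B_{12}$, and then perform the standard innermost-disc surgery to remove the intersection circle $c$. The paper phrases the surgery as a compression of $R_i$ along the disc $D_{3-i}$ (citing Lemma 4.6 of Hempel) and discards the resulting sphere component, whereas you describe it directly as an ambient isotopy supported near $B_{12}$ that slides $D_1$ across the ball past $D_2$; these are the same operation, and your remark that $R_1 \cap B_{12} = D_1$ and $R_2 \cap B_{12} = D_2$ follows from the innermost hypothesis rather than requiring a separate shrinking argument.
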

\begin{proof} Certainly, $D_1 \cup D_2$ is a 2-sphere.  Since $\Sigma \times I$ is irreducible, it must bound a 3-ball $B_{12}$ (see Figure \ref{two_inessential_fig}). Let a neighborhood $U$ of $B_{12}$ in $\Sigma \times I$ be given. For $i=1,2$, $D_{3-i}$ is a compression disc for $R_{i}$ such that compressing $R_{i}$ along $D_{3-i}$ in $U$ gives two surfaces (see Lemma 4.6 of \cite{hempel}). One is a $2$-sphere and the other is a disc. Setting $R_{i}'$ to be the disc component after compression proves the lemma.  
\end{proof}

\begin{figure}[h]
\[
\begin{array}{cc}
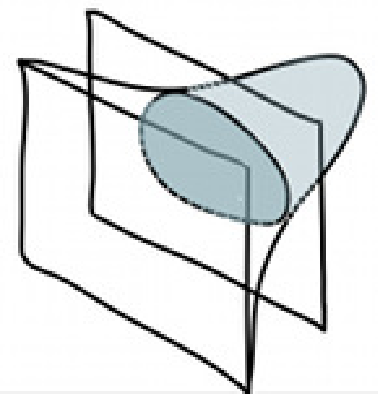 & 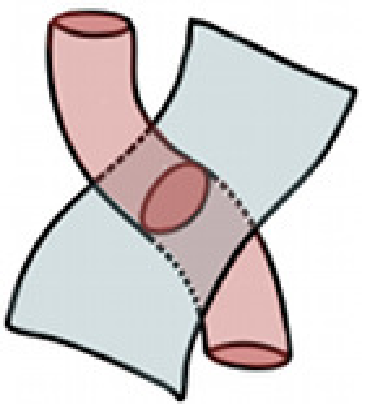
\end{array}
\]
\caption{An inessential circle in $R_1$ and $R_2$ (left) and an inessential circle in $R_1$ that is horizontal circle in $R_2$(right).} \label{two_inessential_fig}
\end{figure}

\begin{lemma}\label{cutsoffthreeball} Suppose that $R_2$ is an annulus, $c$ is inessential in $R_1$, and $c$ is horizontal in $R_2$. Suppose also that $c$ is innermost in the sense that the disc $D\subset R_1$ which $c$ bounds contains no components of $R_1 \cap R_2$ other than $c$. Then $R_2$ cuts off a $3$-ball $B_{12}$ in $\Sigma \times I$.  
\end{lemma}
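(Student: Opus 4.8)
The statement to prove is Lemma~\ref{cutsoffthreeball}: if $R_2$ is a vertically proper embedded annulus, $c \subset R_1 \cap R_2$ is inessential in $R_1$ (bounding an innermost disc $D \subset R_1$) and horizontal in $R_2$, then $R_2$ cuts off a $3$-ball in $\Sigma \times I$. The key point is that since $c$ is horizontal in $R_2$, it does not bound a disc in $R_2$, but it separates $R_2$ into two sub-annuli $A_0$ and $A_1$, where $A_s$ contains $R_2 \cap (\Sigma \times \{s\})$ for $s = 0,1$. Gluing $A_1$ to the disc $D$ along $c$ produces a new vertically proper embedded disc $D' = D \cup_c A_1$ (after a small isotopy to smooth the corner along $c$). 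Note $\partial D' \cap (\Sigma \times \partial I) = \partial D' \cap (\Sigma \times \{1\})$ is a single arc coming from $A_1$, and the two ``vertical'' sides of $D'$ lie in $(\partial\Sigma) \times I$, inherited from the corresponding sides of $R_2$. So $D'$ is a genuine vertically proper disc.

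\textbf{Carrying it out.} First I would make precise the decomposition $R_2 = A_0 \cup_c A_1$: a horizontal circle in an annulus is parallel to each boundary component, hence non-separating-into-discs but separating-into-two-annuli; this is standard surface topology. Next, I would form $D'$ by the cut-and-paste just described, checking that $D'$ is embedded (it is, since $D$ and $A_1$ meet only along $c$, using that $c$ is innermost in $R_1$ so $D$ meets $R_2$ only in $c$) and that it is vertically proper. Then I would consider the region of $\Sigma \times I$ lying ``between'' $A_1$ and $D$: since $c$ is innermost on the $R_1$ side, $D$ is a compressing disc situation, and $D \cup A_1$ together with a collar of $\Sigma \times \{1\}$ bounds a region. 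More directly: $A_1$ and $D$ share boundary $c$, and $A_1 \cup D$ together with the sub-disc of $\Sigma \times \{1\}$ cut off by $\partial A_1 \cap (\Sigma \times \{1\})$... this needs care about whether that boundary arc cuts off a disc in $\Sigma \times \{1\}$. The cleanest route is: push $A_1$ slightly off itself toward $D$ using the two-sidedness of $R_2$; the region swept, capped by $D$ on one end, is seen to be a $3$-ball $B_{12}$ because its boundary is a $2$-sphere ($D$ together with a push-off of $A_1$ together with a thin disc in $\Sigma \times \{1\}$) and $\Sigma \times I$ is irreducible (Section~\ref{prelim_obs}). Once $\partial B_{12}$ is exhibited as a $2$-sphere, irreducibility of $\Sigma \times I$ forces it to bound a $3$-ball, and $R_2$ (specifically $A_1$, hence $R_2$ up to the collar) cuts this ball off.

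\textbf{The main obstacle.} The delicate point is verifying that the relevant boundary arc of $A_1$ in $\Sigma \times \{1\}$, together with $c$'s projection, cuts off an honest disc there — equivalently, that the candidate region really has $2$-sphere boundary and not, say, a torus or higher-genus boundary. I expect to handle this by working in a regular neighborhood of $A_1 \cup D$: since $A_1$ is an annulus with one boundary on $c$ and one boundary an arc-plus-$c$-piece on $\Sigma \times \{1\}$ — wait, more carefully, $\partial A_1$ consists of $c$ and one component of $R_2 \cap (\Sigma \times \{1\})$ together with the two vertical arcs on $(\partial \Sigma) \times I$ — so $A_1$ is vertically proper itself (an annulus with $c$ replacing one would-be boundary, no: $A_1$ is a disc!). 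In fact $c$ together with the horizontal circle structure means $A_1$ is an annulus, but when we also cut $R_2$ which is vertically proper, the piece $A_1$ is a vertically proper sub-annulus or becomes a disc — I would pin down that $A_1$ is an annulus neighborhood of one boundary circle of $R_2$ restricted appropriately, then $D \cup_c A_1$ is again an annulus, not a disc, and the $3$-ball is the region between this annulus and a boundary-parallel copy. I anticipate the cleanest argument parallels Lemma~\ref{two_inessential}: use $D$ as a compressing disc for $R_2$, compress to get a sphere $S$ and an annulus, where $S = D \cup D''$ with $D'' \subset R_2$ the disc-with-corner-smoothed piece, then $S$ bounds a $3$-ball by irreducibility, and that ball is $B_{12}$. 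I would organize the write-up around that compression picture, as it most directly reuses the earlier machinery and sidesteps ad hoc sphere-spotting.
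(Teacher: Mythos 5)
Your proposal contains several genuine errors that keep it from coming together, the most telling of which is a confusion about what a vertically proper embedded annulus is. By definition a vertically proper annulus $R_2 \cong S^1 \times I$ has $R_2 \cap (\Sigma \times \{0\})$ and $R_2 \cap (\Sigma \times \{1\})$ equal to its two boundary circles, and \emph{no} part of $R_2$ in $(\partial\Sigma)\times I$; there are no ``vertical sides.'' Consequently the sub-piece $A_1$ of $R_2$ cut off by $c$ is an honest annulus with boundary $c \cup (R_2 \cap (\Sigma\times\{1\}))$ and nothing else, and $D\cup_c A_1$ is a disc (an annulus capped at one end), not an annulus and not vertically proper in the disc sense. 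Your mid-paragraph oscillation (``so $A_1$ is vertically proper itself \ldots no: $A_1$ is a disc! \ldots then $D\cup_c A_1$ is again an annulus'') is where this confusion plays out. More seriously, the ``cleanest argument'' you settle on at the end cannot work: compressing $R_2$ along $D$ requires $c$ to bound a disc $D''\subset R_2$, and it does not --- $c$ is horizontal in $R_2$ precisely means it bounds no disc there. Cutting $R_2$ along $c$ and capping with two parallel copies of $D$ produces two discs, not a sphere plus an annulus, so there is no sphere $S=D\cup D''$ to apply irreducibility to.

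The direct approach you gesture at can be made to work, but you never close it. From the existence of $D$, the core of $R_2$ is nullhomotopic in $\Sigma\times I$, hence each boundary circle $R_2\cap(\Sigma\times\{s\})$ is nullhomotopic in $\Sigma\times\{s\}$ and bounds a disc $E_s\subset\Sigma\times\{s\}$; then $E_0\cup R_2\cup E_1$ is a $2$-sphere bounding a $3$-ball by irreducibility, and that ball is the region $R_2$ cuts off. The paper avoids even this and uses a dichotomy: either $R_2$ cuts off a $3$-ball or $R_2$ is $\pi_1$-injective; a two-sided $\pi_1$-injective surface is incompressible; but $D$ is a compressing disc for $R_2$ since $c$ bounds no disc in $R_2$, a contradiction. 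That one-line incompressibility argument is the missing key idea in your write-up.
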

\begin{proof} It is easy to see that either $R_2$ cuts off a $3$-ball or $R_2$ is injective. If $R_2$ is injective, the fact that it is two-sided then implies that it is also incompressible (see \cite{matveevbook}). Hence, the hypothesis that $c$ bounds a disc $D$ on $R_1$ such that $D \cap R_2=\text{im}(c)$ implies that $c$ bounds a disc on $R_2$. This contradicts the fact that $c$ is horizontal in $R_2$.       
\end{proof}

\subsubsection{Partisan Bunches} Let $\Sigma \in \mathscr{S}$. Let $R_1$, $R_2$ be vertically proper embedded surfaces such that for $i=1,2$, $R_i$ is a vertically proper embedded disc or annulus. Suppose that $R_1 \cap R_2$ contains a partisan interval $e$. The following lemmas detail how to decrease the number of connected components of $R_1 \cap R_2$ in the two cases in which partisan intervals appear.

\begin{lemma} \label{partisan_top} Suppose that the endpoints of the partisan interval $e$ are both in $\Sigma \times \{t\}$, where $t\in\{0,1\}$. For $i=1,2$, there is a unique interval $a_i$ in $\partial R_i \cap (\Sigma \times \{t\})$ such that $\text{im}(e) \cup \text{im}(a_i)$ bounds a disc $D_i$ on $R_i$. Suppose that $e$ is innermost in the sense that $D_1 \cup D_2$ contains no components of $R_1 \cap R_2$ other than $e$.  Then the following hold:
\begin{enumerate}
\item $\text{im}(a_1) \cup \text{im}(a_2)$ bounds a disc $D_3 \subset \Sigma \times \{t\}$.
\item $D_1 \cup D_2 \cup D_3$ is a $2$-sphere bounding a 3-ball $B_{12}$ in $\Sigma \times I$.
\item For every neighborhood $U$ of $B_{12}$ and $i \in \{1,2\}$, there is a vertically proper embedded surface $R_{i}'$ and an ambient isotopy $F: (\Sigma \times I) \times I \to \Sigma \times I$ such that $F_1(R_{i})=R_i'$, $F_t|_{\Sigma \times I \backslash U}=\text{id}_{\Sigma \times I \backslash U}$ for all $t \in I$, and $R_i' \cap R_{3-i}$ has fewer connected components than $R_1 \cap R_2$.
\end{enumerate}
\end{lemma}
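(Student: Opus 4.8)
The plan is to analyze the local picture near the partisan interval $e$ and then use the irreducibility of $\Sigma\times I$ to produce the ball $B_{12}$. First I would establish (1). Since $e$ is partisan with both endpoints in $\Sigma\times\{t\}$, each disc $D_i$ meets $\Sigma\times\{t\}$ in the arc $a_i=\partial D_i\cap(\Sigma\times\{t\})$, which is the arc on $\partial R_i\cap(\Sigma\times\{t\})$ cut off by the two endpoints of $e$ (this exists and is unique because $R_i$ is a disc or annulus and $e$, together with a boundary arc, must cobound a disc — the ``innermost'' hypothesis is not yet needed here). The two arcs $a_1,a_2$ share the same endpoints and both lie in the surface $\Sigma\times\{t\}$; I would argue that $\mathrm{im}(a_1)\cup\mathrm{im}(a_2)$ is an embedded circle in $\Sigma\times\{t\}$ (after an innermost-disc reduction on $\Sigma\times\{t\}$ if they intersect in their interiors, but since $R_1\cap R_2$ near the top face is controlled by the innermost hypothesis we may assume $a_1,a_2$ meet only at their common endpoints). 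Since $\Sigma\neq S^2$, the only subtlety is whether this circle bounds; but $a_1\cup a_2$ together with $e$ forms the boundary configuration of a subsurface near the top, and pushing $e$ slightly into $\Sigma\times\{t\}$ shows the circle is null-homotopic in a half-neighborhood of the face, hence bounds a disc $D_3\subset\Sigma\times\{t\}$. (If $\Sigma\times\{t\}$ has the circle non-separating, one uses that $D_1\cup D_2$ already forms a properly embedded disc-with-boundary-on-the-face configuration forcing separation.)

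Next, (2): $D_1\cup D_2\cup D_3$ is, by construction, a $2$-sphere — the three discs are glued along the three arcs $e$, $a_1$, $a_2$ in the pattern of a ``triangle'' (two discs along $e$, then capped along $a_1\cup a_2$ by $D_3$), and the innermost hypothesis guarantees no further intersections contaminate the interior. By the Preliminary Observations of Section \ref{prelim_obs}, $\Sigma\times I$ is irreducible, so this $2$-sphere bounds a $3$-ball $B_{12}$.

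For (3), given any neighborhood $U$ of $B_{12}$, the disc $D_{3-i}$ (pushed slightly off the top face into $U$, or $D_{3-i}\cup D_3$ reassembled as an isotopy of $D_{3-i}$ rel $\partial$) serves as a compression/isotopy disc for $R_i$: one isotopes $R_i$ across $B_{12}$ — supported in $U$, fixed outside — sliding the arc of $\partial R_i$ near $e$ past $R_{3-i}$, which removes the component $e$ (and, because $e$ was innermost in $D_1\cup D_2$, introduces no new components) from $R_1\cap R_2$. Exactly as in the proof of Lemma \ref{two_inessential} (cf. Lemma 4.6 of \cite{hempel}), compressing/isotoping $R_i$ along $D_{3-i}$ inside $U$ yields a vertically proper embedded surface $R_i'$ with $F_1(R_i)=R_i'$, $F_t|_{\Sigma\times I\setminus U}=\mathrm{id}$, and $|R_i'\cap R_{3-i}|<|R_1\cap R_2|$.

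I expect the main obstacle to be part (1): carefully verifying that $a_1\cup a_2$ bounds a disc in $\Sigma\times\{t\}$ rather than an essential annulus or a more complicated subsurface, and ruling out the case where $a_1$ and $a_2$ have interior intersections that are not among the components of $R_1\cap R_2$ already excluded. The resolution is that $a_i\subset\partial R_i$, so any interior intersection of $a_1$ with $a_2$ is automatically a point of $R_1\cap R_2$ lying on a boundary curve; combined with the innermost hypothesis one reduces to the clean case, and then a small push-off argument at the top face identifies the bounding disc. Parts (2) and (3) are then formal, running parallel to Lemma \ref{two_inessential}.
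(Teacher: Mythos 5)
Your proposal follows essentially the same route as the paper: use the disc $D_1 \cup D_2$ to force null-homotopy of $a_1 \cup a_2$, invoke irreducibility of $\Sigma \times I$ to get the $3$-ball $B_{12}$, then isotope $R_i$ across $B_{12}$ to drop the component $e$. Two remarks on precision. For claim (1), the phrase ``pushing $e$ slightly into $\Sigma\times\{t\}$ shows the circle is null-homotopic'' is not itself a proof — pushing the arc $e$ alone gives nothing about $a_1\cup a_2$; the clean argument, which the paper uses and which your parenthetical gestures at, is that $D_1\cup D_2$ is a disc in $\Sigma\times I$ with $\partial(D_1\cup D_2)=a_1\cup a_2\subset\Sigma\times\{t\}$, so $a_1\cup a_2$ is null-homotopic in $\Sigma\times I$, hence (via the projection $\Sigma\times I\to\Sigma$) null-homotopic in $\Sigma\times\{t\}$, hence bounds an embedded disc there. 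For claim (3), the relevant reference is Lemma 4.7 of \cite{hempel} (boundary compression along an arc), not Lemma 4.6 (compression along a circle) which is what Lemma~\ref{two_inessential} uses; here $D_{3-i}$ is a \emph{boundary} compression disc for $R_i$ since the arc $e=D_{3-i}\cap R_i$ has endpoints on $\partial R_i$. With those two adjustments your argument coincides with the paper's.
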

\begin{proof} Note that $D_1 \cup D_2$ is a disc such that $\partial (D_1 \cup D_2) \subset \Sigma \times \{t\}$. Since $\partial (D_1 \cup D_2)$ is contractible in $\Sigma \times I$, we can find a contraction of $\partial (D_1 \cup D_2)$ to a point by applying the projection $\Sigma \times I \to \Sigma$.  It follows that $\text{im}(a_1) \cup \text{im}(a_2)=\partial (D_1 \cup D_2)$ bounds a disc on $\Sigma \times \{t\}$. It is easy to compute that $D_1 \cup D_2 \cup D_3$ is a 2-sphere. Since $\Sigma \times I$ is irreducible, $D_1 \cup D_2 \cup D_3$ must bound a 3-ball $B_{12}$ (see the left hand side of Figure \ref{partisan_fig}). Since $D_1 \cup D_2$ contains no components of $R_1 \cap R_2$ other than $e$, it follows that $D_{3-i}$ is a boundary compression disc of $R_{i}$ for $i=1,2$. Let a neighborhood $U$ of $B_{12}$ in $\Sigma \times I$ be given. The compression of $R_i$ along $D_{3-i}$ in $U$ gives two surfaces (see Lemma 4.7 of \cite{hempel}). The two surfaces are both discs, one of which is a vertically proper embedded $I \times I$ and the other is not vertically proper embedded. Choose $R_i'$ to be the vertically proper embedded surface after compression on $D_{3-i}$.
\end{proof}

\begin{figure}[h]
\[
\begin{array}{ccc}
\scalebox{.6}{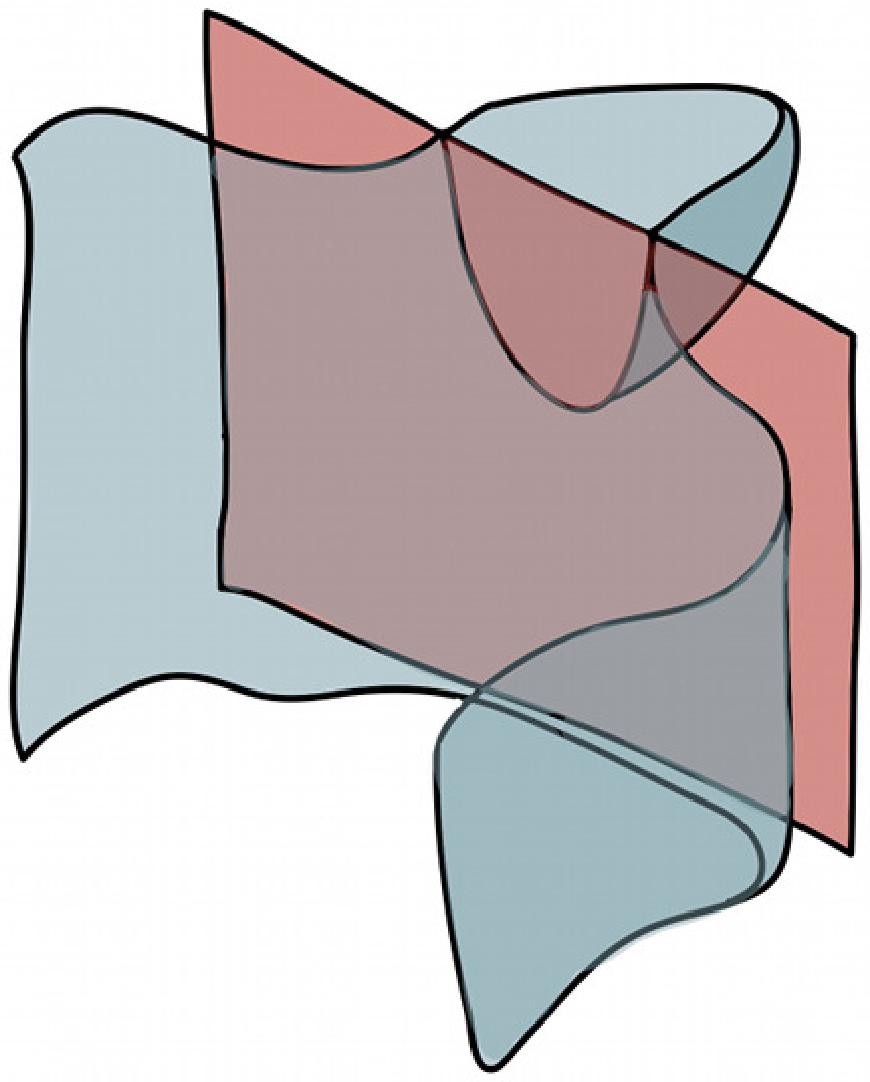} & \hspace{1cm} & \scalebox{.6}{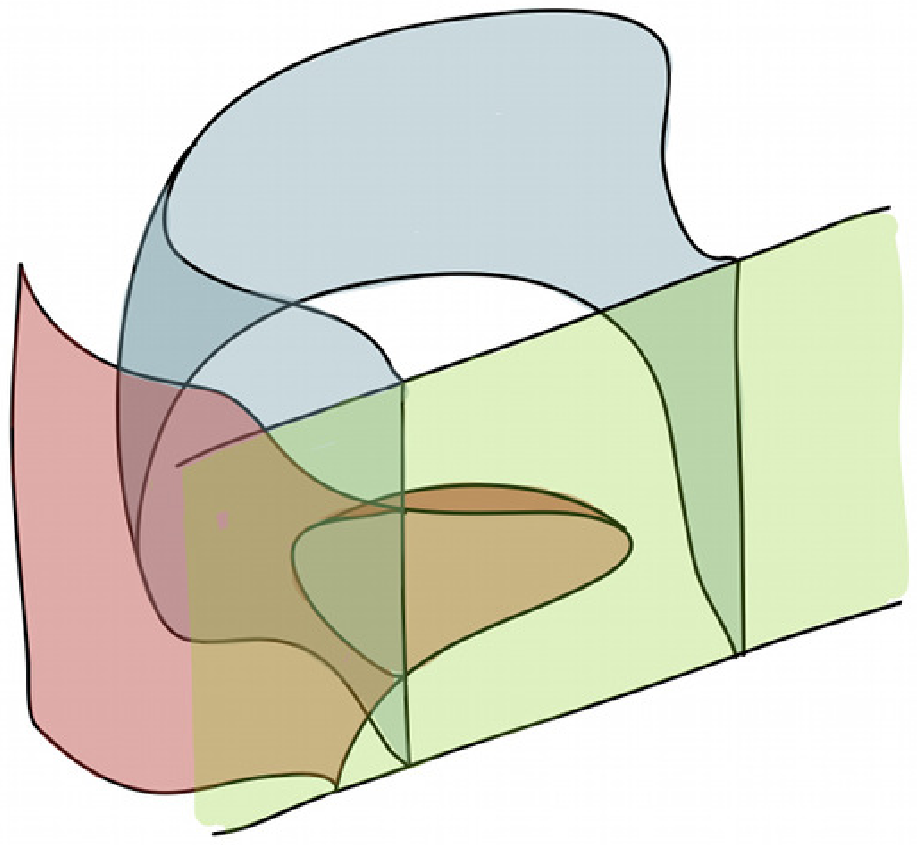}
\end{array}
\]
\caption{An innermost partisan interval intersecting the ``top'' as in Lemma \ref{partisan_top}(left) and an innermost partisan interval intersecting the ``side'' as in Lemma \ref{partisan_side}(right).} \label{partisan_fig}
\end{figure}

\begin{lemma} \label{partisan_side} Suppose that the endpoints of $e$ are both in a single component $C$ of $(\partial \Sigma) \times I$. Then $R_i\approx I \times I$ for $i=1,2$. Suppose that $R_1 \cap C$ consists of two disjoint embedded intervals. For $i=1,2$, there is a unique sub-interval $a_i$ in $R_i \cap C$ between the endpoints of $e$. Then for $i=1,2$, $\text{im}(a_i) \cup \text{im}(e)$ is a 1-sphere bounding a disc $D_i \subset R_i$. Suppose that $e$ is innermost in the sense that $D_1 \cup D_2$ contains no connected components of $R_1 \cap R_2$ other than $e$. Then the following hold:
\begin{enumerate}
\item $\text{im}(a_1) \cup \text{im}(a_2)$ bounds a disc $D_3 \subset C$.
\item $D_1 \cup D_2 \cup D_3$ is a $2$-sphere bounding a 3-ball $B_{12}$ in $\Sigma \times I$.
\item For every neighborhood $U$ of $B_{12}$ and $i \in \{1,2\}$, there is a vertically proper embedded disc $R_{i}'$ and an ambient isotopy $F: (\Sigma \times I) \times I \to \Sigma \times I$ such that $F_1(R_{i})=R_i'$, $F_t|_{\Sigma \times I \backslash U}=\text{id}_{\Sigma \times I \backslash U}$ for all $t \in I$, and $R_i' \cap R_{3-i}$ has fewer connected components than $R_1 \cap R_2$.
\end{enumerate}
\end{lemma}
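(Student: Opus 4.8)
The plan is to mirror the argument of Lemma \ref{partisan_top}, exchanging the roles of the ``top and bottom'' faces $\Sigma \times \partial I$ with the single boundary annulus $C$. First I would set up the configuration carefully: since the endpoints of the partisan interval $e$ both lie on $C \subset (\partial \Sigma) \times I$, and $e \subset R_1 \cap R_2$, both endpoints must lie on $R_i \cap C$ for $i=1,2$. For $R_1 \approx I \times I$ this forces $R_1$ to meet $C$ in (at least) the assumed two disjoint arcs, and a vertically proper disc can meet $C$ in exactly two arcs (otherwise it would meet $C$ in more than two boundary intervals, which for a disc is impossible up to removing inessential pieces — this is where I would invoke the contrapositive of Lemma \ref{lemma_disc_genus_red} if needed, or simply argue directly from the disc structure). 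Each endpoint of $e$ lies in the interior of one of these arcs, so there is a well-defined sub-arc $a_i \subset R_i \cap C$ joining the two endpoints of $e$, and $\mathrm{im}(a_i) \cup \mathrm{im}(e)$ is an embedded circle on $R_i$; since $R_i$ is a disc, it bounds a disc $D_i \subset R_i$. (I should note there are two choices of sub-arc $a_i$; I would take the one such that $D_i$ is on the side not containing the rest of $R_1 \cap R_2$, which is possible precisely because $e$ is innermost.)

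Next, for item (1), observe that $\partial(D_1 \cup D_2) = \mathrm{im}(a_1) \cup \mathrm{im}(a_2)$ is an embedded circle lying entirely in $C \approx S^1 \times I$. Unlike in Lemma \ref{partisan_top}, I cannot immediately contract it via the projection $\Sigma \times I \to \Sigma$, since $C$ is an annulus and not every circle on it bounds. So the key point is to show that $a_1 \cup a_2$ is \emph{inessential} on $C$. Here I would use that $D_1 \cup D_2$ is an embedded disc in $\Sigma \times I$ whose boundary lies on $C$; its boundary is therefore null-homotopic in $\Sigma \times I$, hence null-homotopic in $C$ (since $\pi_1(C) \to \pi_1(\Sigma \times I)$ is injective — $C$ is a $\pi_1$-injective annulus in the boundary, as $\Sigma \times I$ deformation retracts to $\Sigma$ and $C$ corresponds to a boundary component of $\Sigma$), and a null-homotopic embedded circle on an annulus bounds a disc. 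This gives $D_3 \subset C$ with $\partial D_3 = \mathrm{im}(a_1) \cup \mathrm{im}(a_2)$, proving (1).

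For (2), it is a routine Euler-characteristic / gluing check that $D_1 \cup D_2 \cup D_3$ is a $2$-sphere (the three discs are glued along $e$, $a_1$, $a_2$ meeting at the two endpoints of $e$); since $\Sigma \times I$ is irreducible (Section \ref{prelim_obs}), it bounds a $3$-ball $B_{12}$. For (3), the innermost hypothesis says $D_1 \cup D_2$ meets $R_1 \cap R_2$ only in $e$, so $D_{3-i}$ serves as a boundary-compression disc for $R_i$ (its boundary arc $a_{3-i}$ lies on $C = (\partial \Sigma) \times I$, and $e$ lies in the interior of $R_i$ away from the rest of the intersection). Boundary-compressing $R_i$ along $D_{3-i}$ inside the neighborhood $U$ of $B_{12}$ yields two discs by Lemma 4.7 of \cite{hempel}; exactly one of them is vertically proper (its two ``vertical'' sides still run from $\Sigma \times \{0\}$ to $\Sigma \times \{1\}$ and its two ``horizontal'' sides still lie on $(\partial \Sigma) \times I$), and we take $R_i'$ to be that one. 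The isotopy $F$ realizing the compression is supported in $U$, and $R_i' \cap R_{3-i}$ has lost at least the component $e$, so it has strictly fewer components than $R_1 \cap R_2$.

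The main obstacle I anticipate is item (1): showing $a_1 \cup a_2$ bounds on the annulus $C$. In Lemma \ref{partisan_top} the analogous step is trivial because $\Sigma \times \{t\}$ is a surface to which everything projects; here one genuinely needs the $\pi_1$-injectivity of the boundary annulus $C \hookrightarrow \Sigma \times I$ and the fact that the disc $D_1 \cup D_2$ certifies null-homotopy of the boundary curve. Everything else is a direct transcription of the partisan-top argument with ``$\Sigma \times \{t\}$'' replaced by ``$C$'' and ``compression'' replaced by ``boundary compression.''
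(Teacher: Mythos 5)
Your argument for items (2) and (3) essentially matches the paper's: the Euler characteristic computation, irreducibility of $\Sigma\times I$ to get the $3$-ball $B_{12}$, and boundary compression along $D_{3-i}$ supported in $U$ (via Lemma 4.7 of \cite{hempel}), taking $R_i'$ to be the vertically proper component. For item (1), however, your resolution has a gap. You reduce to showing $a_1 \cup a_2$ is inessential in $C$ by arguing that the boundary annulus inclusion $C \hookrightarrow \Sigma \times I$ is $\pi_1$-injective. But $C = \gamma\times I$ for $\gamma$ a boundary circle of $\Sigma$, and this inclusion is $\pi_1$-injective if and only if $\gamma$ is essential in $\Sigma$ — which fails exactly when $\Sigma$ is a disc. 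Nothing in the lemma's hypotheses excludes $\Sigma\approx D^2$ (a disc with distinguished boundary is a legitimate element of $\mathscr{S}$), and in that case $\pi_1(\Sigma\times I)$ is trivial, so the null-homotopy of $a_1\cup a_2$ furnished by the disc $D_1\cup D_2$ tells you nothing about its class in $\pi_1(C)\cong\mathbb{Z}$.

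The paper avoids $\pi_1$ entirely and this is precisely where the hypothesis that $R_1\cap C$ consists of two disjoint embedded intervals does its work. Those two arcs $\alpha_1,\alpha_2$ are vertically proper arcs in the annulus $C$, so they cut $C$ into two discs. The arc $a_1$ lies on (say) $\alpha_1$, and the innermost hypothesis (read strongly: $D_1\cup D_2$ meets $R_1\cap R_2$ only along $e$) forbids $a_2\subset\partial D_2$ from crossing $\alpha_1$ or $\alpha_2$ except at the endpoints of $e$, since any such crossing would be a point of $R_1\cap R_2\cap (D_1\cup D_2)$ not on $e$. Hence $a_2$ stays in one of the two disc components of $C\setminus(\alpha_1\cup\alpha_2)$, and $a_1\cup a_2$ lies in the closure of that disc — it is therefore inessential in $C$ and bounds $D_3$. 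This separation argument is insensitive to whether $\gamma$ is essential in $\Sigma$, so it covers the disc case for free. Your proof is correct for all $\Sigma$ that are not discs; to repair it, either treat $\Sigma\approx D^2$ as a separate case or replace the $\pi_1$-injectivity step with the separation argument above. (Two small side remarks: the sub-interval $a_i$ is already uniquely specified by the lemma statement, so there is no choice to make there; and in your item (3) you have the labels ``vertical'' and ``horizontal'' swapped relative to the paper's convention for the sides of a vertically proper disc, though this does not affect the substance.)
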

\begin{proof} Since $R_1 \cap C$ is two disjoint intervals and $D_1 \cup D_2$ contains no connected components of $R_1 \cap R_2$ other than $e$, we cannot have that $\text{im}(a_1) \cup \text{im}(a_2)$ is homotopically non-trivial in $C$. Hence, $\text{im}(a_1) \cup \text{im}(a_2)$ bounds a disc $D_3 \subset C$. Then $D_1 \cup D_2 \cup D_3$ is a 2-sphere bounding a 3-ball $B_{12}$ (see the right hand side of Figure \ref{partisan_fig}). Since $D_1 \cup D_2$ contains no connected components of $R_1 \cap R_2$ other than $e$, it follows that $D_{3-i}$ is a boundary compression disc of $R_{i}$ for $i=1,2$. Let a neighborhood $U$ of $B_{12}$ in $\Sigma \times I$ be given. Compressing $R_{i}$ along $D_{3-i}$ in $U$ (see Lemma 4.7 of \cite{hempel}) gives a surface with two components, one of which is a vertically proper embedded disc and one of which is a disc which is not vertically proper embedded. Choose $R_i'$ to be the vertically proper embedded disc after compression.
\end{proof}

\subsubsection{Cornered Bunches} Let $\Sigma \in \mathscr{S}$. Let $R_1$, $R_2$ be vertically proper embedded discs $I \times I$. Suppose that $R_1 \cap R_2$ contains a cornered interval $e$. The following lemma details how to decrease the number of connected components of $R_1 \cap R_2$ when an innermost cornered interval appears. 

\begin{lemma} \label{corner_lemma} Let $c_i$ be the corner of $e$ on $R_i$ for $i=1,2$. Suppose that $e$ has an endpoint in $\Sigma \times \{t\}$, where $t \in \{0,1\}$, and an endpoint on a component $C$ of $(\partial \Sigma) \times I$. There is a unique interval $a_i$ between the endpoint of $e$ on $R_i \cap (\Sigma \times \{t\})$ and $c_i$ and there is a unique interval $b_i$ between the endpoint of $e$ on $R_i \cap C$ and $c_i$. Then $\text{im}(a_i) \cup \text{im}(b_i) \cup \text{im}(e)$ bounds a disc $D_i$ on $R_i$. Suppose that $e$ is innermost in the sense that $D_1 \cup D_2$ contains no components of $R_1 \cap R_2$ other than $e$. Let $E$ be the connected component of $C\backslash R_1$ containing $\text{int}(\text{im}(b_2))$ and let $E_{12}$ the connected component of $E\backslash \text{im}(b_2)$ which does not intersect $\Sigma \times \{1-t\}$.  Let $e_{12}$ be the interval on $E_{12} \cap (\Sigma \times \{t\})$ between $c_1$ and $c_2$, so that $\partial E_{12}=\text{im}(b_1) \cup \text{im}(b_2) \cup \text{im}(e_{12})$. Then the following hold:
\begin{enumerate}
\item $\text{im}(e_{12}) \cup \text{im}(a_1) \cup \text{im}(a_2)$ bounds a disc $D_3 \subset \Sigma \times \{t\}$.
\item $D_1 \cup D_2 \cup D_3 \cup E_{12}$ is a 2-sphere bounding a 3-ball $B_{12}$ in $\Sigma \times I$.
\item For every neighborhood $U$ of $B_{12}$ and $i \in \{1,2\}$, there is a vertically properly embedded disc $R_{i}'$ and an ambient isotopy $F: (\Sigma \times I) \times I \to \Sigma \times I$ such that $F_1(R_{i})=R_i'$, $F_t|_{\Sigma \times I \backslash U}=\text{id}_{\Sigma \times I \backslash U}$ for all $t \in I$, and $R_i' \cap R_{3-i}$ has fewer connected components than $R_1 \cap R_2$.
\end{enumerate}  
\end{lemma}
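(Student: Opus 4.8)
\emph{Proof proposal.} The plan is to follow the template of Lemmas \ref{partisan_top} and \ref{partisan_side}: exhibit an explicit $2$-sphere, invoke the irreducibility of $\Sigma\times I$ to obtain the $3$-ball $B_{12}$, and then perform a boundary-compression inside an arbitrary neighborhood of $B_{12}$. The only genuinely new feature is that $\partial(D_1\cup D_2)$ now runs along both $\Sigma\times\{t\}$ and $C$, so the sphere must be capped by \emph{two} extra faces, a disc $D_3\subset\Sigma\times\{t\}$ and the disc $E_{12}\subset C$; this is the reason the statement is more elaborate than the partisan ones.

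First I would fix notation and dispose of degeneracies: write $p$ for the endpoint of $e$ on $\Sigma\times\{t\}$ and $q$ for the endpoint on $C$, and after a small general-position perturbation of $R_1,R_2$ the four points $p,q,c_1,c_2$ are distinct. Since $e$ is innermost, $D_1\cap D_2=e$, so $D_1\cup D_2$ is an embedded disc with $\partial(D_1\cup D_2)=\text{im}(a_1)\cup\text{im}(b_1)\cup\text{im}(b_2)\cup\text{im}(a_2)$, a quadrilateral with corners $p,c_1,q,c_2$. Because $a_i,b_i\subset\partial D_i\subset D_1\cup D_2$ and $a_i,b_i\subset R_i$, the innermost hypothesis forces every intersection among $a_1,a_2$ (resp. $b_1,b_2$) to lie on $e$, and these arcs meet $e$ only at $p$ (resp. $q$); this is the bookkeeping that will make the capped-off complex embedded rather than merely immersed.

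For (1): the arcs $b_1,b_2,e_{12}$ all lie in the annulus $C$, and by construction $E_{12}$ is the complementary region of $C$ cut along $R_1\cap C$ and along $b_2$ that avoids $\Sigma\times\{1-t\}$; one checks that $E$ is a disc (cutting the annulus along vertical sides of $R_1$), that $b_2$ is properly embedded in $E$, and that $\partial E_{12}=\text{im}(b_1)\cup\text{im}(b_2)\cup\text{im}(e_{12})$, so this curve is null-homotopic in $C$. Hence $\partial(D_1\cup D_2\cup E_{12})=\text{im}(a_1)\cup\text{im}(a_2)\cup\text{im}(e_{12})$ is a simple closed curve in $\Sigma\times\{t\}$ that is null-homotopic in $\Sigma\times I$, and applying the projection $\Sigma\times I\to\Sigma$ exactly as in Lemma \ref{partisan_top} shows it bounds a disc $D_3\subset\Sigma\times\{t\}$. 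For (2): the crux is that $D_1,D_2,D_3,E_{12}$ meet pairwise only along their shared boundary arcs ($D_1\cap D_2=e$, $D_i\cap D_3=a_i$, $D_i\cap E_{12}=b_i$, $D_3\cap E_{12}=e_{12}$); this follows from the innermost hypothesis together with the fact that $D_3\subset\Sigma\times\{t\}$ and $E_{12}\subset C$ lie in $\partial(\Sigma\times I)$ while $R_1,R_2$ are vertically proper, so the interiors of $D_1,D_2$ miss $\partial(\Sigma\times I)$. Granting this, $S:=D_1\cup D_2\cup D_3\cup E_{12}$ is a closed surface assembled from $4$ discs along $6$ edges with $4$ vertices, so $\chi(S)=4-6+4=2$ and $S\cong S^2$; irreducibility of $\Sigma\times I$ (Section \ref{prelim_obs}) then produces the $3$-ball $B_{12}$.

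For (3) the argument is parallel to the partisan lemmas. Since $D_{3-i}\cap R_i=e$ and $\partial D_{3-i}\setminus e=\text{im}(a_{3-i})\cup\text{im}(b_{3-i})\subset\partial(\Sigma\times I)$, the disc $D_{3-i}$ is a boundary-compression disc for $R_i$ along the properly embedded arc $e$. Given any neighborhood $U$ of $B_{12}$, boundary-compressing $R_i$ along $D_{3-i}$ inside $U$ (Lemma 4.7 of \cite{hempel}) is realized by an ambient isotopy supported in $U$; since $e$ separates the disc $R_i$, the result has two disc components, exactly one of which is vertically proper (the other has two of its sides in $\Sigma\times\{t\}$), and we set $R_i'$ to be that one. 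Because the isotopy sits near $B_{12}$ and, by innermostness, $D_1\cup D_2$ meets $R_1\cap R_2$ only in $e$, the compression deletes the component $e$ and creates no new intersection with $R_{3-i}$, so $R_i'\cap R_{3-i}$ has strictly fewer components than $R_1\cap R_2$. I expect the main obstacle to be exactly the embeddedness bookkeeping in step (2) — ruling out stray intersections among the four capping discs, where the innermost hypothesis and vertical properness must be used in full; steps (1) and (3) are essentially the cornered analogues of arguments already carried out for partisan intervals.
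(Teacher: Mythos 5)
Your proposal is correct and follows essentially the same route as the paper's proof: exhibit the $2$-sphere $D_1\cup D_2\cup D_3\cup E_{12}$, appeal to irreducibility of $\Sigma\times I$ to get the $3$-ball, and then boundary-compress $R_i$ along $D_{3-i}$ inside $U$ via Lemma 4.7 of \cite{hempel}. The only difference is that you spell out the embeddedness bookkeeping (pairwise intersections of the four caps, the Euler-characteristic count) that the paper leaves implicit; this is added detail rather than a different argument.
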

\begin{proof} For the first claim, note that $\text{im}(e_{12}) \cup \text{im}(a_1) \cup \text{im}(a_2)$ is contractible in $\Sigma \times I$. Applying the projection $\Sigma \times I \to \Sigma$ to this contraction shows that $\text{im}(e_{12}) \cup \text{im}(a_1) \cup \text{im}(a_2)$ is contractible in $\Sigma$.  Thus, $\text{im}(e_{12}) \cup \text{im}(a_1) \cup \text{im}(a_2)$ bounds a disc $D_3 \subset \Sigma \times \{t\}$. It follows that $D_1 \cup D_2 \cup D_3 \cup E_{12}$ is a $2$-sphere.  Since $\Sigma \times I$ is irreducible, the $2$-sphere must bound a $3$-ball $B_{12}$ (see Figure \ref{corner_fig}). Since $e$ is innermost, $D_{3-i}$ is a boundary compression disc for $R_{i}$ for $i=1,2$. Let a neighborhood $U$ of $B_{12}$ in $\Sigma \times I$ be given. Compressing $R_{i}$ along $D_{3-i}$ in $U$ (see Lemma 4.7 of \cite{hempel}) gives a surface with two components: one of which is a vertically proper embedded disc and one of which is a disc which is not vertically proper embedded. Choose $R_i'$ to be the disc which is vertically proper embedded disc after compression on $D_{3-i}$.
\end{proof}

\begin{figure}[h]
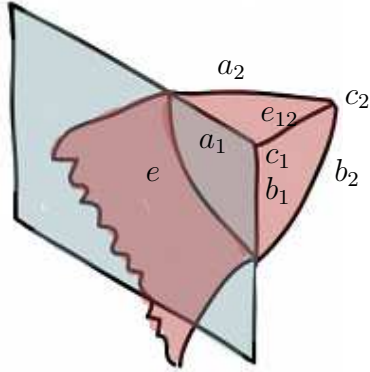
\caption{An Innermost Cornered Interval.} \label{corner_fig}
\end{figure}

\subsubsection{Horizontal Bunches} Let $\Sigma \in \mathscr{S}$. Let $R_1$, $R_2$ be vertically proper embedded discs $I \times I$. Suppose that $R_1 \cap R_2$ is a horizontal bunch. We number the intervals in the bunch from ``bottom'' to ``top'' on $R_i$, $e_i^1$, $e_i^2$,$\ldots$, $e_i^{n}$. By ``bottom'' to ``top'', we mean that cutting along $e_i^1$ gives two discs, one of which contains no other $e_i^k$ but contains $R_i \cap (\Sigma \times \{0\})$. Moreover, cutting along $e_i^j$ and $e_i^k$ contains exactly those intervals $e_i^s$ for which $i \le s \le k$. We identify $e_i^0$ with $R_i\cap (\Sigma \times \{0\})$ and $e_i^{n+1}$ with $R_i \cap (\Sigma \times \{1\})$. For $i=1,2$, the intervals $e_i^1,\ldots, e_i^n$ divide $R_i$ into rectangles $r_i^1,\ldots, r_i^{n+1}$ such that $\text{im}(e_i^{k-1}) \cup \text{im}(e_i^{k}) \subset \partial r_i^k$ (see Figure \ref{horiz_int_rect_fig}).
\newline
\newline
\noindent The following lemma shows how to decrease the number of connected components of $R_1 \cap R_2$ when horizontal intervals occur (see also Lemma 4.7 of \cite{hempel}).

\begin{figure}[htb]
\scalebox{.75}{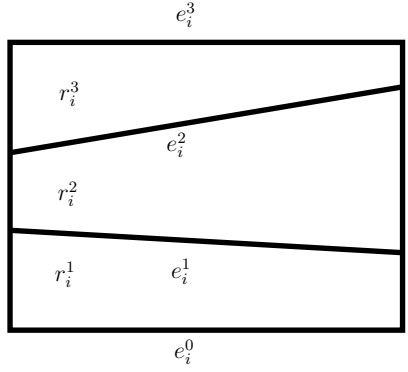}
\caption{Horizontal bunches divide $R_i$ into rectangles.} \label{horiz_int_rect_fig}
\end{figure}

\begin{lemma}\label{horiz_bunches} Suppose there is exactly one component $C$ of $(\partial \Sigma) \times I$ for which $R_1 \cap C \ne \emptyset$. Then the following hold:
\begin{enumerate}
\item There are rectangles $r_i^j$ and $r_{3-i}^k$ such that $r_i^j \cap R_{3-i}=r_{3-i}^k \cap R_i$. The $j,k$ may be chosen so that either $j=k=1$, $j=k=n+1$, or $1<j,k<n+1$.
\item If $j=k=1$ or $j=k=n+1$ then $(r_i^j \cup r_{3-i}^k) \cap C$ consists of two intervals cutting off a pair of disjoint discs $D_1$, $D_2$ in $C$ such that the following hold:
\begin{enumerate}
\item $\partial (D_1 \cup D_2 \cup r_i^j \cup r_{3-i}^k)$ is a $1$-sphere which bounds a disc $D_3 \subset \Sigma \times \partial I$.
\item $D_1 \cup D_2 \cup D_3 \cup r_i^j \cup r_{3-i}^k$ is a 2-sphere which bounds a 3-ball $B_{12}$ in $\Sigma \times I$.
\item For every neighborhood $U$ of $B_{12}$ and $i \in \{1,2\}$, there is a vertically proper embedded disc  $R_{i}'$ and an ambient isotopy $F: (\Sigma \times I) \times I \to \Sigma \times I$ such that $F_1(R_{i})=R_i'$, $F_t|_{\Sigma \times I \backslash U}=\text{id}_{\Sigma \times I \backslash U}$ for all $t \in I$, and $R_i' \cap R_{3-i}$ has fewer connected components than $R_1 \cap R_2$.
\end{enumerate}
\item If $1<j,k<n+1$, then $(r_i^j \cup r_{3-i}^k) \cap C$ consists of two $1$-spheres which bound two discs $D_1$ and $D_2$ contained in $C$ such that the following hold.
\begin{enumerate}
\item $D_1 \cup D_2 \cup r_i^j \cup r_{3-i}^k$ is a 2-sphere which bounds a 3-ball $B_{12}$ in $\Sigma \times I$.
\item For every neighborhood $U$ of $B_{12}$ and $i \in \{1,2\}$, there is a vertically proper embedded disc $R_{i}'$ and an ambient isotopy $F: (\Sigma \times I) \times I \to \Sigma \times I$ such that $F_1(R_{i})=R_i'$, $F_t|_{\Sigma \times I \backslash U}=\text{id}_{\Sigma \times I \backslash U}$ for all $t \in I$, and $R_i' \cap R_{3-i}$ has fewer connected components than $R_1 \cap R_2$.
\end{enumerate} 
\end{enumerate} 
\end{lemma}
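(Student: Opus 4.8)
The plan is to follow the template already used for Lemmas~\ref{partisan_top}, \ref{partisan_side}, and \ref{corner_lemma}: from an innermost piece of $R_1\cap R_2$ one assembles a $2$-sphere out of sub-discs of $R_1$, of $R_2$, and of $\partial(\Sigma\times I)$; irreducibility of $\Sigma\times I$ fills that sphere with a $3$-ball $B_{12}$; and then one isotopes a portion of $R_1$ (or $R_2$) across $B_{12}$, which preserves vertical properness and destroys at least one component of $R_1\cap R_2$. The only genuinely new feature here is that in a horizontal bunch no single interval bounds a disc on its own, so the innermost object is a whole rectangle $r_i^j$ together with a matching rectangle $r_{3-i}^k$; manufacturing this pair is exactly claim~(1), and after that the argument essentially repeats the earlier ones with circles or arcs in $C$ in place of the partisan/cornered intervals.

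For claim~(1), I would start from the observation that $R_1\cap R_2=\{e^1,\dots,e^n\}$ carries two bottom-to-top linear orders, the one read off $R_1$ and the one read off $R_2$, related by a permutation. If the bottom rectangles already satisfy $e_1^1=e_2^1$, take $j=k=1$. Otherwise note that $r_2^1$ is a disc whose interior is disjoint from $R_1$ and whose boundary is $e_2^1$ together with arcs in $\Sigma\times\{0\}$ and in $C$, so $e_2^1$ is a $\partial$-compression arc for $R_1$, and symmetrically for $r_1^1$; chasing which rectangle of $R_{3-i}$ is cut off by the intersection arcs of an innermost rectangle, and using that the $e^a$ are simultaneously horizontal and pairwise disjoint in \emph{both} discs, one locates a pair $r_i^j,r_{3-i}^k$ with $r_i^j\cap R_{3-i}=r_{3-i}^k\cap R_i$. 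The last point is that the ``mixed'' configuration $j\in\{1,n+1\}$, $k\in\{1,n+1\}$, $j\ne k$, cannot be the only option: in that case gluing $r_i^j$ to $r_{3-i}^k$ along their common arc would produce a disc meeting both $\Sigma\times\{0\}$ and $\Sigma\times\{1\}$, and comparing the endpoint patterns of the $e^a$ along the two vertical sides of $R_1$ and of $R_2$ forces the existence of an aligned matched pair instead. I expect this bookkeeping — producing the matched rectangles and excluding the mixed case — to be the main obstacle; everything downstream is standard cut-and-paste topology.

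Given claim~(1), consider first the extreme case $j=k=1$ (the case $j=k=n+1$ is symmetric). Gluing $r_i^1$ to $r_{3-i}^1$ along $e^1$ yields a disc whose boundary consists of two arcs in $\Sigma\times\{0\}$ together with two arcs $\gamma,\gamma'$ in $C$, each of $\gamma,\gamma'$ having both endpoints on the single circle $C\cap(\Sigma\times\{0\})$. Since an arc in an annulus with both endpoints on one boundary component is boundary-parallel, $\gamma$ and $\gamma'$ cut off disjoint discs $D_1,D_2\subset C$; the circle $\partial(r_i^1\cup r_{3-i}^1\cup D_1\cup D_2)$ then lies in $\Sigma\times\{0\}$ and, being null-homotopic there (apply the projection $\Sigma\times I\to\Sigma$), bounds a disc $D_3\subset\Sigma\times\partial I$. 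This produces the $1$-sphere of (2)(a) and the $2$-sphere of (2)(b), which bounds a $3$-ball $B_{12}$ by irreducibility of $\Sigma\times I$. For (2)(c), pushing $r_i^1$ across $B_{12}$ — sliding its arc on $\Sigma\times\{0\}$ through $D_3$ and its two side arcs through $D_1,D_2$ inside $C$ until $r_i^1$ lies alongside a push-off of $r_{3-i}^1$ — is an ambient isotopy supported in any prescribed neighborhood $U$ of $B_{12}$, keeps $R_i$ vertically proper (the sliding occurs entirely within $\Sigma\times\{0\}$ and within $C$), and removes $e^1$ from $R_1\cap R_2$.

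The interior case $1<j,k<n+1$ is the same argument with circles replacing arcs. Now $r_i^j$ and $r_{3-i}^k$ each have all four boundary arcs with endpoints interior to $C$, and the aligned matching $\{e^{j-1},e^j\}=\{e^{k-1},e^k\}$ allows us to glue the two rectangles along \emph{both} shared arcs, producing an annulus whose two boundary circles lie in $C$. By the innermost hypothesis together with the fact that $C$ is an annulus — an essential boundary circle would force the annulus $r_i^j\cup r_{3-i}^k$ to be incompressible in $\Sigma\times I$, contradicting the innermost choice exactly as in Lemma~\ref{cutsoffthreeball} — each circle bounds a disc $D_1,D_2\subset C$, so $D_1\cup D_2\cup r_i^j\cup r_{3-i}^k$ is a $2$-sphere and hence bounds a $3$-ball $B_{12}$. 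Pushing $r_i^j$ across $B_{12}$ by sliding its two side arcs through $D_1,D_2$ within $C$ until it lies alongside a push-off of $r_{3-i}^k$ is again an ambient isotopy supported in $U$, preserves vertical properness, and this time removes \emph{both} $e^{j-1}$ and $e^j$ from $R_1\cap R_2$. In every case the number of connected components of $R_1\cap R_2$ strictly decreases, which is what the lemma asserts.
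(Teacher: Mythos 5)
Your downstream arguments (parts (2) and (3)) are in the same spirit as the paper's: assemble a $2$-sphere from $r_i^j$, $r_{3-i}^k$, and discs in $C$ (and, when $j=k\in\{1,n+1\}$, also a disc in $\Sigma\times\partial I$); invoke irreducibility of $\Sigma\times I$ to fill it with a $3$-ball; then push one of the sub-rectangles across the ball to remove one or two components of $R_1\cap R_2$. That part of the proposal is fine and matches the paper's cut-and-paste strategy via boundary compression.

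The genuine gap is exactly where you flag it: part (1) is not proved, only gestured at. You describe it as ``bookkeeping'' and ``the main obstacle'' and offer a permutation-of-the-$e^a$'s heuristic plus a vague claim that ``comparing the endpoint patterns of the $e^a$ along the two vertical sides of $R_1$ and of $R_2$ forces the existence of an aligned matched pair.'' But part (1) is the real content of the lemma, and it is not a purely combinatorial statement about permutations of an abstract list: the paper proves it by a geometric contradiction argument. Concretely, the paper assumes no matched pair exists, takes an interior rectangle $r_1^i$ whose two boundary intervals are $e_2^a,e_2^b$ with $b-a>1$ and $b-a$ minimal, forms the annulus $A=r_1^i\cup\bigcup_{a<l\le b}r_2^l$, shows $A$ is not a M\"obius band by counting its boundary components, caps $A$ with discs $E_1,E_2$ cut out of the annulus $C$ to get a $2$-sphere, fills it with a $3$-ball $B$, and then uses $B$ to produce a rectangle $r_1^s$ whose intervals correspond on $R_2$ to a pair with strictly smaller separation, contradicting minimality. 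None of this is in your sketch, and the combinatorial heuristic alone will not suffice: not every permutation of the $e^a$'s is realizable, and the constraint you need is precisely the $3$-manifold topology that the paper's argument exploits.

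A secondary issue: in part (3) you justify that the two boundary circles of $r_i^j\cup r_{3-i}^k$ are inessential in $C$ by appealing to ``the innermost hypothesis,'' but the lemma statement carries no innermost hypothesis; the rectangles of part (1) are not chosen innermost, only matched. The appeal to Lemma~\ref{cutsoffthreeball} is also not on point, since there the relevant disc is a compression disc on $R_1$ and the annulus is $R_2$ itself, not a hybrid $r_i^j\cup r_{3-i}^k$. You would need a separate argument (the paper pushes this through by saying it follows ``as in parts (1) and (2),'' trading on the $3$-ball construction from the proof of part (1)) to rule out both circles being core-parallel in $C$ and to get \emph{disjoint} disc caps $D_1,D_2$.

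Finally, a small but real point you do not address in your part (1) discussion: you need to rule out $r_i^j\cup r_{3-i}^k$ being a M\"obius band when $1<j,k<n+1$, since the downstream argument uses that it is an annulus with two boundary circles. The paper handles this explicitly in its proof of (1) by observing $\partial A$ has more than one component.
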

\begin{proof} Consider the first claim.  If $n=1$, $R_1$ and $R_2$ each have two sub-rectangles and the claim is obviously true.  Suppose then that $n \ge 2$.  We will argue by way of contradiction.  Suppose that no such pair of sub-rectangles exists. Choose any $r_1^i$ such that $1<i<n+1$. Then $r_1^i \cap R_2=\text{im}(e_2^a)\cup\text{im}(e_2^b)$, $a<b$, where $b-a>1$. We may suppose that $r_1^i$ is chosen so that $b-a$ is as small as possible. Consider $A=r_1^i \cup \bigcup_{a<l\le b} r_2^l$. Then $A$ is an annulus or a M\"{o}bius strip.
\newline
\newline
Cut $C$ along $R_2 \cap C$.  This gives two discs $M$ and $N$. Now $r_1^i \cap C$ consists of two intervals $u_1^i$ and $v_1^i$.  Suppose without loss of generality that $M$ contains $\text{int}(u_1^i)$. Let $p_2^i$ be the interval in $A \cap R_2 \cap C$ between the endpoints of $u_1^i$. Since $u_1^i$ intersects $R_2\cap C$ only in its endpoints, it follows that $\text{im}(u_1^i)\cup \text{im}(p_2^i)$ is a 1-sphere which bounds a disc $E_1 \subset M$. Similarly, there is an interval $q_2^i$ in $A \cap R_2\cap C$ between the endpoints of $v_1^i$ such that $\text{im}(v_1^i)\cup \text{im}(q_2^i)$ bounds a disc $E_2$ in $M$ or in $N$. Since $\partial A$ contains more than one boundary component, $A$ is not a M\"{o}bius strip. Hence, $A \cup E_1 \cup E_2$ must be a $2$-sphere. Since $\Sigma \times I$ is irreducible, it bounds a $3$-ball $B$. Thus, if there is any rectangle $r_1^s$ which intersects $B$, we must have $\partial r_1^s \subset \partial B$. Moreover, $e_1^{s-1}$ and $e_1^{s}$ correspond to some intervals $e_2^c$ and $e_2^d$ contained in $A \cap C \cap R_2$. By construction, we must have that $|d-c|<|a-b|$. Since $r_1^i$ was chosen so that $|a-b|$ was minimal, this is a contradiction. This establishes the first claim.  
\newline
\newline
Consider now the second claim of the lemma. The existence of the discs $D_1$ and $D_2$ is transparent from the definitions.  Note that $(D_1 \cup D_2 \cup r_i^j \cup r_{3-i}^k) \cap (\Sigma \times \partial I)$ is a $1$-sphere $Y$ that is contractible in $\Sigma \times I$. Composing this contraction with the projection $\Sigma \times I \to \Sigma$ shows that $Y$ is contractible in $\Sigma$. Thus, $Y=\partial D_3$ for some disc $D_3$ in $\Sigma \times (\partial I)$. It follows that $D_1 \cup D_2 \cup D_3 \cup r_i^j \cup r_{3-i}^j$ is a $2$-sphere bounding a $3$-ball $B_{12}$ (see the left hand side of Figure \ref{horiz_bunch_fig}). Since $r_i^j \cup r_{3-i}^j$ contains no other components of $R_1 \cap R_2$ other than $r_i^j \cap r_{3-i}^k$, we have that $r_i^j$ is a boundary compression disc for $R_{3-i}$ and $r_{3-i}^k$ is a boundary compression disc for $R_i$. Let a neighborhood $U$ of $B_{12}$ in $\Sigma \times I$ be given. Compressing $R_{3-i}$ along $r_i^j$ (or $R_i$ along $r_{3-i}^k$) in $U$ gives a surface of two components: one is a disc ``close'' to $ r_i^j \cup r_{3-i}^k$ while the other is a disc ``close'' to $(R_{3-i} \backslash r_{3-i}^k) \cup r_i^j$ (resp. $(R_{i} \backslash r_{i}^j) \cup r_{3-i}^k$). Only the second of these options is vertically proper embedded. Choose $R_i'$ to be the vertically proper embedded disc after compression.
\newline
\newline
Lastly, consider the third claim of the lemma statement. The first assertion and the assertion that $D_1 \cup D_2 \cup r_i^j \cup r_{3-i}^k$ is a $2$-sphere bounding a $3$-ball follow exactly as in the proof parts (1) and (2) of this lemma. Now consider the vertically proper embedded disc $(R_{3-i} \backslash r_{3-i}^k) \cup  r_i^j$. We may push this surface slightly off of $r_{i}^j$ to obtain a surface $R_{3-i}'$ such that $R_{3-i}' \cap R_i$ has fewer connected components than $R_1 \cap R_2$. Since $D_1 \cup D_2 \cup r_i^j \cup r_{3-i}^k$ bounds a $3$-ball, there is an ambient isotopy taking $R_{3-i}$ and $R_{3-i}'$. A similar construction may be used to determine the surface $R_i'$.
\end{proof}

\begin{figure}[h]
\[
\begin{array}{ccc}
\scalebox{.75}{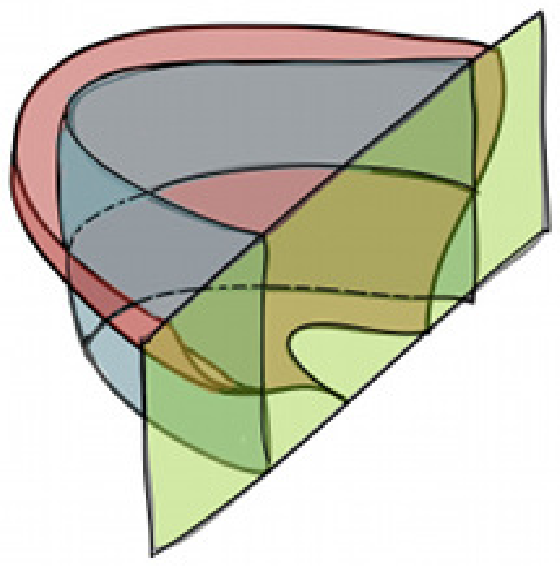} & \hspace{1cm} & \scalebox{.75}{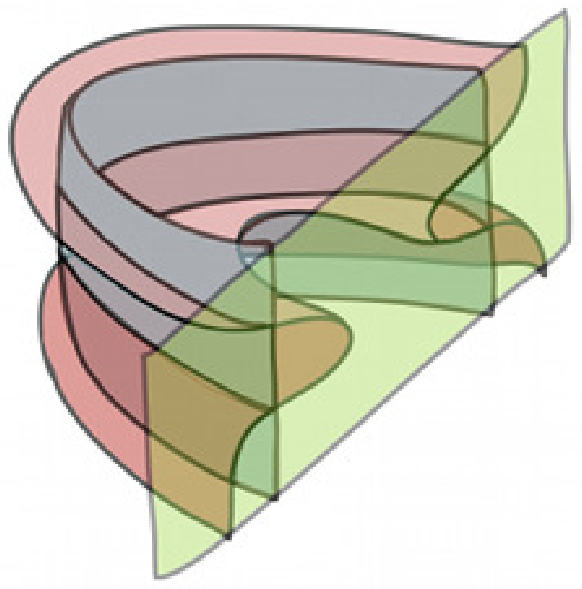}
\end{array}
\]
\caption{A horizontal bunch with only one interval (left) and a horizontal bunch with at least two intervals (right).} \label{horiz_bunch_fig}
\end{figure}

\section{Proofs of Lemma \ref{main_1}, Lemma \ref{main_2}, and Theorem \ref{main_3}} 
\subsection{Lemma \ref{main_1}: Kuperberg's Theorem-Long Knot Version} \label{sec_kup} In the present section, we prove the long knot version of Kuperberg's Theorem which was stated in the introduction (Lemma \ref{main_1}). We note that there are other generalizations of Kuperberg's theorem. For example, there is a generalization of Kuperberg's Theorem to twisted $I$-bundles \cite{bourgoin}. 

\begin{proof}[Proof of Lemma \ref{main_1}] Let $(\Sigma,\tau)$ be a long knot in $\Sigma \times I$.  We associate to $(\Sigma,\tau)$ the pair $(g(\Sigma),\#(\Sigma))$, where $g(\Sigma)$ denotes the genus and $\#(\Sigma)$ denotes the number of distinguished boundary components. For such pairs we give the dictionary ordering. If $(\Sigma,\tau)$ is not irreducible, then it must have a num reducing destabilization or a genus reducing destabilization.  Let $(\Sigma',\tau')$ be the result of this destabilization. Then $(g(\Sigma'),\#(\Sigma'))<(g(\Sigma),\#(\Sigma))$. Since the genus is bounded below by zero and the number of distinguished boundary components is either one or two, it follows that every $(\Sigma,\tau)$ which is not irreducible has a irreducible descendant. This proves the first claim.
\newline
\newline
For claims (2)-(4), we will suppose (as in \cite{kuperberg}) that there is a $(\Sigma_0,\tau_0)$ having two descendants $(\Sigma_0',\tau_0')$ and $(\Sigma_0'',\tau_0'')$ which do not have a common descendant. From there we will arrive at a contradiction. Let $F'$ be the vertically proper embedded surface of the destabilization of $(\Sigma_0,\tau_0)$ to $(\Sigma_0',\tau_0')$ and $F''$ the vertically proper embedded surface in the destabilization of $(\Sigma_0,\tau_0)$ to $(\Sigma_0'',\tau_0'')$. By a general position argument, the surfaces $F',F''$ may be chosen to intersect transversely. 
\newline
\newline
First suppose that $F' \cap F''=\emptyset$.  Let $(\Sigma_0''',\tau_0''')$ be the result of destabilizing $(\Sigma_0,\tau_0)$ along $F'$ and $F''$. Then $(\Sigma_0',\tau_0')$ and $(\Sigma_0'',\tau_0'')$ both have $(\Sigma_0''',\tau_0''')$ as a common descendant.  This is a contradiction.  Therefore, $F' \cap F'' \ne \emptyset$.
\newline
\newline
Secondly, suppose that all connected components of $F' \cap F''$ are vertical intervals. Let $N$ be a regular neighborhood of $F' \cap F''$ with respect to some sufficiently fine triangulation (fine enough that it does not intersect $\tau$). Consider $S=\text{cl}(\partial N \backslash (\Sigma_0 \times \partial I))$. Then $S$ is a surface whose connected components are vertically proper embedded surfaces which are homeomorphic to $S^1 \times I$ or $I \times I$. Therefore, the components of $S$ are destabilization surfaces of $(\Sigma_0,\tau_0)$. Let $(\Sigma_0''',\tau_0''')$ denote the result of destabilizing along every component of $S$. It follows that $(\Sigma_0''',\tau_0''')$ is a descendant of both $(\Sigma_0',\tau_0')$ and $(\Sigma_0'',\tau_0'')$. Thus, if all connected components of $F' \cap F''$ are vertical intervals, then we have a contradiction.
\newline
\newline
Therefore it sufficient to prove that $F'\cap F''$ contains only vertical intervals. Suppose then that $F' \cap F''$ has connected components which are not vertical intervals. Suppose furthermore that $F' \cap F''$ has the minimum number of connected components of all pairs of vertically proper embedded destabilization surfaces $G',G''$ for which the destabilizations have no common descendant and for which $G' \cap G''$ has connected components which are not vertical intervals. There are three cases to consider:

\begin{enumerate}[(A)]
\item $F'$ and $F''$ are vertically proper embedded annuli.
\item $F'$ is a vertically proper embedded disc and $F''$ is a vertically proper embedded annulus (or vice versa).
\item $F'$ and $F''$ are both vertically proper embedded discs.
\end{enumerate} 

\noindent Case (A) follows exactly as in \cite{kuperberg}. In particular, it is shown that we may assume that the connected components of $F' \cap F''$ are all vertical intervals. An argument similar to that above then shows that this yields a contradiction.  We thus need only consider cases (B) and (C).
\newline
\newline
\underline{Case (B): $F' \approx I \times I$ and $F'' \approx S^1 \times I$.} Each connected component of $F' \cap F''$ is either
\begin{enumerate}[(i)]
\item an embedded circle,
\item a partisan interval, or 
\item a vertical interval.
\end{enumerate}
Note that since $F''\cap ((\partial \Sigma_0) \times I)=\emptyset$, $F'\cap F''$ cannot have any connected components which are horizontal intervals or cornered intervals.
\newline
\newline
\underline{Sub-case (B.i):} Let $c$ be an embedded circle in $F' \cap F''$. If $c$ bounds a disc on both $F'$ and $F''$, then we may suppose that $c$ is innermost in the sense that the discs bounded by $c$ contain no other connected components of $F' \cap F''$. Apply Lemma \ref{two_inessential} with $R_1=F'$ and $R_2=F''$ to get a vertically proper destabilization surface $R_1'$ which is ambient isotopic to $F'$ and such that $R_1' \cap F''$ has fewer connected components than $F' \cap F''$. This contradicts the hypotheses on $F'$ and $F''$, so $c$ cannot be inessential on both $F'$ and $F''$.
\newline
\newline
Since $c$ necessarily bounds a disc in $F'$, $c$ must be horizontal in $F''$. We may suppose that $c$ is chosen among all components of $F' \cap F''$ so that it is innermost: the disc which it bounds in $F'$ contains no other components of $F'\cap F''$. By Lemma \ref{cutsoffthreeball}, it follows that $F''$ cuts off a $3$-ball from $\Sigma_0 \times I$. Since there are necessarily many such inessential destabilizations of $(\Sigma_0,\tau_0)$ whose defining surfaces do not intersect $F'$, this contradicts our choice of $F'$ and $F''$. We can conclude that $F' \cap F''$ contains no components which are embedded circles.
\newline
\newline
\underline{Sub-case (B.ii):} By sub-case (B.i), we may assume that $F' \cap F''$ contains no components which are embedded circles. Let $e$ be a partisan interval in $F' \cap F''$. Since $F''$ is a vertically proper embedded annulus, it follows that there is a $t\in \{0,1\}$ such that both endpoints of $e$ are in $\Sigma_0 \times \{t\}\subset \Sigma_0 \times I$. Since $F'\cap F''$ contains no embedded circles, we may choose $e$ among all partisan intervals in $F' \cap F''$ such that $e$ is innermost in the sense of Lemma \ref{partisan_top}. Let $R_1=F'$ and $R_2=F''$. We apply the construction of Lemma \ref{partisan_top} to $R_1,R_2$ to obtain a vertically proper embedded surface $R_1'$ which is ambient isotopic to $R_1=F'$ and such that $R_1' \cap F''$ has fewer connected components than $F' \cap F''$. Since this contradicts the choice of $F'$ and $F''$, it follows that $F'\cap F''$ has no connected components which are partisan intervals.
\newline
\newline
Therefore, we have shown that $F' \cap F''$ is a vertical bunch.  As previously shown, $(\Sigma_0',\tau_0')$ and $(\Sigma_0'',\tau_0'')$ have a common descendant. As this is a contradiction, claims (2)-(4) follow for case (B).
\newline
\newline
\underline{Case (C): $F' \approx I \times I$ and $F''\approx I \times I$:} Each connected component of $F' \cap F''$ is either:
\begin{enumerate}[(i)]
\item an embedded circle,
\item a partisan interval,
\item a cornered interval,
\item a horizontal interval, or
\item a vertical interval.
\end{enumerate}
Each case is considered in the stated order. Each sub-case ($j$) relies on the non-existence of connected components occurring in sub-case ($k$), where $k<j$.
\newline
\newline
\underline{Sub-case (C.i):} Let $c$ be an embedded circle in $F' \cap F''$. Then $c$ is an inessential circle in $F'$ and $F''$.  We suppose that $c$ is is innermost in the sense of Lemma \ref{two_inessential}. Apply Lemma \ref{two_inessential} with $R_1=F'$ and $R_2=F''$ to obtain a vertically proper embedded destabilization disc $R_1'$ ambient isotopic to $F'$ such that $R_1'\cap F''$ has fewer connected components than $F'\cap F''$. As this contradicts the hypotheses on $F',F''$, it follows that $F' \cap F''$ has no connected components which are embedded circles.
\newline
\newline
\underline{Sub-case (C.ii):} By sub-case (C.i), $F' \cap F''$ has no connected components which are embedded circles.  Let $e$ be a partisan interval in $F' \cap F''$. 
\newline
\newline
Suppose that there is a $t\in\{0,1\}$ such that both endpoints of $e$ are in $\Sigma_0 \times \{t\} \subset \Sigma_0 \times I$. In this case, we let $R_1=F'$ and $R_2=F''$.  We apply Lemma \ref{partisan_top} to $R_1,R_2$ to obtain a surface $R_1'$ which is ambient isotopic to $F'$ and such that $R_1' \cap F''$ has fewer connected components than $F' \cap F''$. This contradicts the hypotheses on $F'$ and $F''$.  Hence, it follows that both of the endpoints of $e$ are not in $\Sigma_0 \times \{t\}$ for $t\in\{0,1\}$.
\newline
\newline
Since $e$ is partisan, it follows that there is a connected component $C$ of $(\partial \Sigma_0) \times I$ such that both endpoints of $e$ are in $C$. Then $F' \cap C$ and $F'' \cap C$ each contain at least one connected component. Suppose that $F' \cap C$ and $F'' \cap C$ both contain exactly one component. By Lemma \ref{lemma_disc_genus_red}, it follows that $F'$ and $F''$ are both not genus reducing. If both are num reducing, then $\Sigma_0'$ and $\Sigma_0''$ are homeomorphic. Moreover, there is a homeomorphism $h:\Sigma_0' \to \Sigma_0''$ such that $h \times \text{id}$ takes $\tau_0'$ to $\tau_0''$. Thus, at most one of $F'$ and $F''$ is num reducing. Suppose $F'$ is num reducing and $F''$ is not num reducing. Then there is a connected component $C_2$ of $(\partial \Sigma_0) \times I$ such that $F'' \cap C_2$ contains exactly one component and $F' \cap C_2=\emptyset$. Then the destabilization along $F''$ can be considered as an inessential stabilization along $C_2$ (i.e. adding a 3-ball along $C_2$). Since $F'$ and $C_2$ are disjoint, stabilizing along $C_2$ and destabilizing along $F'$ may be done in any order. It follows that $(\Sigma_0',\tau_0')$ and $(\Sigma_0'',\tau'')$ have a common descendant. This is a contradiction.  
\newline
\newline
Suppose then that neither $F'$ nor $F''$ is num reducing. Then these destabilizations can be considered as stabilizations. If there is a connected component $C_3 \ne C$ of $(\partial \Sigma_0) \times I$, such that $F'\cap C_3 \ne \emptyset$ and $F'' \cap C_3 \ne \emptyset$, then both destabilizations may be replaced with inessential stabilizations along $C_3$. If there are distinct connected components $C_4,C_5$ of $(\partial \Sigma_0)\times I$ such that $F'\cap C_4 \ne \emptyset$ and $F'' \cap C_5 \ne \emptyset$, then the destabilizations may be replaced with inessential stabilizations along $C_4$ and $C_5$, respectively. It follows that $(\Sigma_0',\tau_0')$ and $(\Sigma_0'',\tau'')$ have a common descendant. This is a contradiction.
\newline
\newline
Thus, at least one of $F'\cap C$ and $F'' \cap C$ must contain two connected components. Say $F'$ satisfies this property. Then let $R_1=F'$ and $R_2=F''$. We apply Lemma \ref{partisan_side} to $R_1$ and $R_2$ and an innermost partisan interval $e$ (that this is possible follows from sub-case (C.i)). Then there is a surface $R_2'$ which is ambient isotopic to $F''$ in $\Sigma_0 \times I$ and such that $F' \cap R_2'$ has fewer connected components than $F' \cap F''$. This contradicts the hypotheses on $F'$ and $F''$. Thus, $F' \cap F''$ has no connected components which are partisan intervals.  
\newline
\newline
\underline{Sub-case (C.iii):} By sub-cases (C.i) and (C.ii), we have that $F' \cap F''$ has no connected components which are embedded circles or partisan intervals. Let $e$ be a cornered interval.  Let $R_1=F'$ and $R_2=F''$. We apply Lemma \ref{corner_lemma} to $R_1$, $R_2$, and an innermost cornered interval $e$. Then there is a surface $R_1'$ which is ambient isotopic to $F'$ and having the property that $R_1' \cap F''$ has fewer connected components than $F' \cap F''$. This contradicts the hypotheses on $F'$ and $F''$.  Thus, $F' \cap F''$ contains no connected components which are cornered intervals.
\newline
\newline
\underline{Sub-case (C.iv):} By sub-cases (C.i), (C.ii), and (C.iii), we have that $F' \cap F''$ contains no connected components which are embedded circles, partisan intervals, or cornered intervals. If $F'\cap F''$ contains a horizontal interval, then it cannot contain any vertical intervals.  Thus, we assume in this case that $F' \cap F''$ is a nonempty horizontal bunch.
\newline
\newline
Either (a) there is exactly one component $C$ of $(\partial \Sigma_0) \times I$ such that $F' \cap C \ne \emptyset$ and $F'' \cap C \ne \emptyset$ or (b) there are exactly two components $C_1,C_2$ such that $F' \cap C_i \ne \emptyset$ and $F'' \cap C_i \ne \emptyset$ for $i=1,2$. 
\newline
\newline
Suppose that (a) holds. Set $R_1=F'$ and $R_2=F''$.  Then $R_1$ satisfies the hypotheses of Lemma \ref{horiz_bunches}. In any of the possible outcomes of the Lemma, we obtain by ``compression'' a surface $R_1'$ which is ambient isotopic to $R_1$ and which has the property that $R_1'\cap F''$ has fewer connected components than $F' \cap F''$. This contradicts the hypotheses on $F'$ and $F''$. 
\newline
\newline
Therefore, if $F'\cap F''$ is a horizontal bunch, then case (b) holds. Then either both $F'$ and $F''$ are num reducing or neither are num reducing. If neither are num reducing, we may replace the destabilization along $F$ and $F'$ with an inessential stabilization along any $C_i$ which is not distinguished.  This contradicts the fact that $(\Sigma_0',\tau_0')$ and $(\Sigma_0'',\tau_0'')$ do not have a common descendant. Suppose then that both $F'$ and $F''$ are num reducing. Then $\Sigma_0'$ and $\Sigma_0''$ are homeomorphic as they have the same genus and the same number of boundary components. Moreover, there is a homeomorphism $h:\Sigma_0' \to \Sigma_0''$ such that $h \times \text{id}$ takes $\tau_0'$ to $\tau_0''$. This contradicts the hypothesis that $(\Sigma_0',\tau_0')$ and $(\Sigma_0'',\tau_0'')$ have no common descendant. Thus, $F' \cap F''$ cannot be a horizontal bunch.
\newline
\newline
It follows then that $F'\cap F''$ must be a vertical bunch.  As previously shown, $(\Sigma_0',\tau_0')$ and $(\Sigma_0'',\tau_0'')$ have a common descendant. This is a contradiction. 
\end{proof}

\begin{remark} The proof of Lemma \ref{main_1} shows that two irreducible representatives have the same number of distinguished boundary components.
\end{remark}

\subsection{Lemma \ref{main_2}: Stability of Decompositions} \label{sec_stab_decomp} In this section, we prove that some decompositions are preserved by destabilizations. In particular, we prove that if a concatenation is destabilized along a vertically proper embedded surface, then the result is also a concatenation of long knots in thickened surfaces from the same stability classes. This is the content of Lemma \ref{main_2}. First we need the following supporting lemmas.

\begin{figure}
\[
\begin{array}{cc}
\begin{array}{c} \scalebox{1}{\psfig{figure=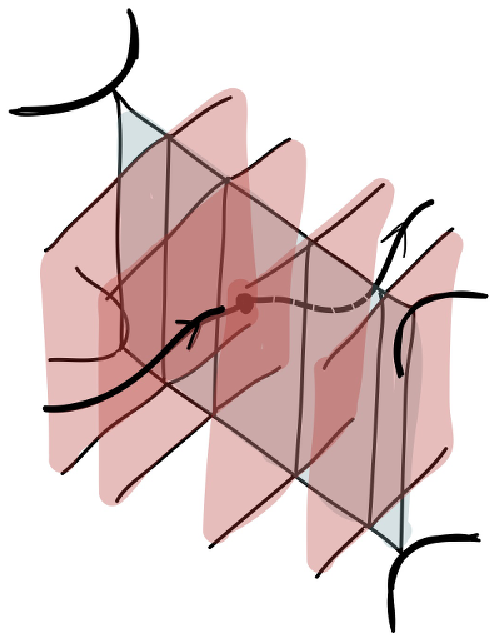}}\end{array} & \begin{array}{c} \scalebox{1}{\psfig{figure=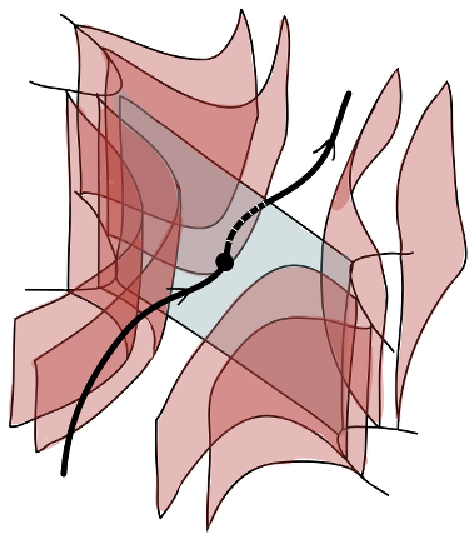}}\end{array}
\end{array}
\]
\caption{$R$ (blue) and $W$ (red) intersecting in a vertical bunch (left hand side of figure) and the sub-rectangles of $W$ from the proof of the Exploding Lemma.}\label{explode_fig}
\end{figure}

\begin{lemma}[The Exploding Lemma]\label{explode} Let $(\Sigma,\tau)=(\Sigma_1\#\Sigma_2,\tau_1\#\tau_2)$ and let $R$ be the vertically proper embedded disc in $\Sigma \times I$ defining the concatenation. Let $W$ be a destabilization surface of $(\Sigma,\tau)$ such that $W \cap R$ is a nonempty vertical bunch of $R$. Cutting $R$ along the vertical bunch $W \cap R$ gives a set of sub-rectangles of $R$: $\{R_1,\ldots,R_n\}$. Let $R_*$ be the sub-rectangle which intersects $\tau$. Let $(\Sigma',\tau')$ be the long knot obtained from $(\Sigma,\tau)$ by first cutting along $W$ and then cutting along each of the vertically proper embedded discs $\{R_1,\ldots,R_n\}\backslash \{R_*\}$. Then the following hold.
\begin{enumerate}
\item $(\Sigma',\tau')=(\Sigma_1'\#\Sigma_2',\tau_1'\#\tau_2')$, with $R_*$ the surface defining the concatenation, and for $i=1,2$, $(\Sigma_i',\tau_i')\sim_s(\Sigma_i,\tau_i)$, and 
\item if $W$ is genus reducing, then $g(\Sigma')<g(\Sigma)$.
\end{enumerate}
\end{lemma}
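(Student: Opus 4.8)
The first task is to verify that $(\Sigma',\tau')$ is well defined, i.e. that after cutting $\Sigma \times I$ along $W$ each surviving sub-rectangle $R_j$ is a vertically proper embedded disc disjoint from $\tau$. The only edges of a sub-rectangle $R_j$ that need not already lie in $\Sigma \times \partial I$ or $(\partial \Sigma) \times I$ are the vertical intervals of the bunch $W \cap R$ bounding $R_j$; but each such interval lies on $W$, and by the definition of destabilization, cutting along $W$ produces a new long knot $(\Sigma'',\tau'')$ in which the two copies of $W$ become part of $(\partial \Sigma'') \times I$. Hence every surviving $R_j$ has all four of its edges in the correct faces of $\Sigma'' \times I$, so it is vertically proper there; and since $\tau$ meets $R$ only in $R_*$, we have $R_j \cap \text{im}(\tau)=\emptyset$ for $j \ne *$. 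Consecutive sub-rectangles share an edge lying on $W$, which the cut severs, so the $R_j$ are pairwise disjoint in $\Sigma'' \times I$; thus the cuts along $\{R_1,\ldots,R_n\}\backslash\{R_*\}$ are unambiguous and $(\Sigma',\tau')$ is well defined. (The case in which $W$ is an annulus is identical: a nonempty vertical bunch of the disc $R$ cuts $W$ into rectangles as well.)

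For claim (1), I would reorganize the cuts. Cutting $\Sigma \times I$ along $W$ and then along \emph{all} of $R_1,\ldots,R_n$ (now including $R_*$) is the same as cutting along $W$ and then along $R$, and since cuts along the transverse surfaces $W$ and $R$ may be performed in either order, this is in turn the same as first cutting along $R$ and then along $W$; throughout, only the pieces meeting $\text{im}(\tau)$ are retained, so the order of the operations and of the discarding is immaterial. Cutting $\Sigma \times I$ along $R$ gives the two pieces $i_1(\Sigma_1 \times I)$ and $i_2(\Sigma_2 \times I)$ of the concatenation, and $R$ cuts $W$ into sub-rectangles $W^{(1)},\ldots,W^{(p)}$, each contained in one of these two pieces. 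Transporting through $i_k^{-1}$, under which $R$ corresponds to part of $(\partial \Sigma_k) \times I$, each $W^{(a)} \subset i_k(\Sigma_k \times I)$ becomes a vertically proper embedded disc in $\Sigma_k \times I$ disjoint from $\tau_k$. Therefore $\Sigma_k' \times I$ is obtained from $\Sigma_k \times I$ by a sequence of destabilizations along the $W^{(a)}$ it contains, whence $(\Sigma_k',\tau_k')\sim_s(\Sigma_k,\tau_k)$. Finally, the two arcs into which $\tau$ is cut by $R_*$ lie in the two distinct pieces coming from $i_1(\Sigma_1 \times I)$ and $i_2(\Sigma_2 \times I)$, so they cannot be joined in $\Sigma' \times I \backslash R_*$; hence $R_*$ disconnects $\Sigma' \times I$ into $\Sigma_1' \times I$ and $\Sigma_2' \times I$, exhibiting $(\Sigma',\tau')=(\Sigma_1'\#\Sigma_2',\tau_1'\#\tau_2')$ with $R_*$ the surface defining the concatenation.

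Claim (2) then follows from the monotonicity of genus under destabilization: cutting a vertically proper disc or annulus out of $\Sigma \times I$ and retaining a component never raises the genus. This is checked directly from $\chi(\Sigma)=2-2g(\Sigma)-c(\Sigma)$ in each of the separating and non-separating cases (cutting along a disc raises the Euler characteristic by one and changes the number of boundary components by one, while cutting along an annulus leaves the Euler characteristic fixed and changes the number of boundary components by two). If $W$ is genus reducing then, by definition, the surface $\Sigma''$ obtained by cutting along $W$ satisfies $g(\Sigma'')<g(\Sigma)$, and the subsequent cuts along the $R_j$ cannot raise the genus, so $g(\Sigma')<g(\Sigma)$.

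The part I expect to require the most care is the bookkeeping behind claim (1): justifying that the cuts along $W$ and along the $R_j$ may be rearranged into the convenient order ``cut along $R$, then cut along the pieces of $W$'' while tracking which components are discarded at each stage, and verifying that $R_*$ genuinely disconnects $\Sigma' \times I$ rather than merely compressing it. Once this is pinned down, each individual move is a routine destabilization and the stable-equivalence conclusions are formal.
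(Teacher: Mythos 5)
Your proof is correct and follows essentially the same route as the paper: the crux of both arguments is that, since $R$ is disconnecting, the pieces of $W$ cut off by $W\cap R$ each lie in one of the two factors $\Sigma_1\times I$, $\Sigma_2\times I$ and are vertically proper destabilization surfaces there, so each factor's stability class is preserved, while claim (2) follows from the fact that cutting along $W$ first is genus reducing and subsequent destabilizations cannot raise genus. You are somewhat more explicit than the paper about the reorganization of cuts and the well-definedness of $(\Sigma',\tau')$; the paper achieves the same effect by cutting along $R$, re-identifying the two copies of $R_*$, and pushing the $W$-pieces off $R_*$ before destabilizing, which is just a different ordering of the same bookkeeping.
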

\begin{proof} Note that after cutting $\Sigma \times I$ along $W$, we obtain a long knot $(\Phi,\gamma)$ stably equivalent to $(\Sigma,\tau)$.  Each of the discs in $\{R_1,\ldots,R_n\}\backslash \{R_*\}$ corresponds to a vertically proper embedded destabilization surface of $(\Phi,\gamma)$ of $\Phi \times I$. Thus, it follows that $g(\Sigma')<g(\Sigma)$ when $W$ is genus reducing.  
\newline
\newline
Now, the vertical bunch $W \cap R$ of $R$ also divides $W$ into sub-rectangles (if $W$ is a disc) or sectors (if $W$ is an annulus). Since $R$ is disconnecting, each sub-rectangle (or sector) of $W$ must lie in one of $\Sigma_1 \times I$ or $\Sigma_2 \times I$ (see the right hand side of Figure \ref{explode_fig}). Hence, each sub-rectangle (sector) of $W$ is a vertically proper embedded destabilization surface of either $(\Sigma_1,\tau_1)$ or $(\Sigma_2,\tau_2)$. By pushing off the two-sided surface $R_*$ if necessary, we may assume that no sub-rectangle (sector) intersects $R_*$. Consider the following alternate construction of $(\Sigma',\tau')$. Cut along $R$ and then identify the resulting two copies of $R_*$. This changes neither the stability class of $(\Sigma,\tau)$ nor the stability classes of the left and right parts, $(\Sigma_i,\tau_i)$ for $i=1,2$. Cutting along each of the sub-rectangles (sectors) of $W$ (each of which is a destabilization surface) does not change the stability class of either component $(\Sigma_1,\tau_1)$ or $(\Sigma_2,\tau_2)$. This completes the proof of the lemma.
\end{proof}

\begin{lemma}[The Num Reducing Lemma]\label{num_reduce} Let $(\Sigma,\tau)=(\Sigma_1\#\Sigma_2,\tau_1\#\tau_2)$ and let $R$ be the vertically proper embedded disc in $\Sigma \times I$ defining the concatenation. Let $W$ be a vertically proper embedded destabilization disc of $(\Sigma,\tau)$ such that $W \cap R$ is a nonempty vertical bunch of $R$ and $W$. Cutting $W$ along the vertical bunch $W \cap R$ gives a set of sub-rectangles of $W$, $\mathscr{W}=\{W_1,\ldots,W_m\}$. Cutting $R$ along the vertical bunch $W \cap R$ gives a set of sub-rectangles of $R$, $\{R_1,\ldots,R_n\}$. Let $R_*$ be the sub-rectangle which intersects $\tau$. Let $(\Sigma',\tau')$ be the long knot obtained by cutting $\Sigma \times I$ along $R$ and subsequently identifying the two copies of $R_*$. Then the following hold:
\begin{enumerate}
\item $R_*$ defines a concatenation $(\Sigma',\tau')=(\Sigma_1'\# \Sigma_2',\tau_1'\# \tau_2')$ such that $(\Sigma_i',\tau_i')\sim_s (\Sigma_i,\tau_i)$ for $i=1,2$, and
\item if $W$ is num reducing, there is a non-empty set of pairwise disjoint destabilization surfaces $\mathscr{X} \subset \mathscr{W}$ of $(\Sigma',\tau')$ such that $X \cap R_*=\emptyset$ for all $X \in \mathscr{X}$, and such that destabilizing $\Sigma' \times I$ along all the $X \in \mathscr{X}$ gives a long knot $(\Sigma'',\tau'')$ such that $(\Sigma'',\tau'')=(\Sigma_1''\#\Sigma_2'',\tau_1''\#\tau_2'')$, $(\Sigma_i'',\tau_i'')\sim_s (\Sigma_i,\tau_i)$ for $i=1,2$, and $\emph{num}(\Sigma'')<\emph{num}(\Sigma)$.
\end{enumerate}
\end{lemma}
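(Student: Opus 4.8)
The plan is to imitate the proof of the Exploding Lemma (Lemma~\ref{explode}), carrying along the extra bookkeeping of distinguished boundary components. For part~(1) one argues exactly as in the ``alternate construction'' step of the proof of Lemma~\ref{explode}: cutting $\Sigma\times I$ along $R$ recovers the two long knots $(\Sigma_1,\tau_1)$ and $(\Sigma_2,\tau_2)$ (this is immediate from the definition of concatenation, since $R$ is the surface defining it), and re-identifying the two copies of $R_*$ re-attaches $\tau_1$ to $\tau_2$ along the single point $\tau\cap R\in R_*$, so $R_*$ meets $\tau'$ transversally in one point and disconnects $\Sigma'\times I$ into the pieces carrying $\tau_1$ and $\tau_2$. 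The one thing to check is that $R_*$ is genuinely a vertically proper embedded disc in the reassembled $\Sigma'\times I$: its horizontal sides lie in $\Sigma'\times\{0,1\}$ by construction, while the remaining sides are the bunch intervals of $W\cap R$ adjacent to $R_*$ (or, for an extreme $R_*$, one such interval together with an original side of $R$), and these land in $(\partial\Sigma')\times I$ precisely because the free copies of the neighbouring sub-rectangles of $R$ are now exposed on the boundary. Hence $R_*$ defines a concatenation whose left and right parts are literally $(\Sigma_1,\tau_1)$ and $(\Sigma_2,\tau_2)$, in particular stably equivalent to them.

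For part~(2) I would first locate the num-reduction. Since $W$ is a destabilization disc that is num reducing, it is neither disconnecting nor genus reducing, so by the contrapositive of Lemma~\ref{lemma_disc_genus_red} the arc $W\cap(\Sigma\times\{1\})$ has its two endpoints in distinct boundary components of $\Sigma$; since destabilizing along $W$ lowers $num$, these must be the two distinguished components $C_1\ni\tau(0)$ and $C_2\ni\tau(1)$, so $num(\Sigma)=2$. A short count --- a concatenation merges the distinguished component of $\Sigma_1$ containing $\tau_1(1)$ with the distinguished component of $\Sigma_2$ containing $\tau_2(0)$, so $num(\Sigma)=num(\Sigma_1)+num(\Sigma_2)-1$ --- shows that exactly one of $C_1,C_2$ is the component carrying the sides of $R$, the other being inherited cleanly from the corresponding $\Sigma_i$. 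Next, cut $W$ along the vertical bunch $W\cap R$, giving $\mathscr{W}=\{W_1,\dots,W_m\}$, numbered so that $W_1$ meets $C_1\times I$ and $W_m$ meets $C_2\times I$. Since each interval of the bunch lies in $R$, which separates $\Sigma_1\times I$ from $\Sigma_2\times I$, consecutive $W_j$ lie on opposite sides of $R$; thus $W_j$ lies in the $\Sigma_1$-piece when $j$ is odd and in the $\Sigma_2$-piece when $j$ is even, and $m$ is even.

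Now pass to the concatenation $(\Sigma',\tau')$ of part~(1). Each $W_j$ whose two bounding bunch intervals are not sides of $R_*$ is a vertically proper destabilization disc of the appropriate part (the left part for $j$ odd, the right part for $j$ even), disjoint from $\tau'$ and from $R_*$; the finitely many $W_j$ that do touch $R_*$ can be pushed slightly off $R_*$ onto an adjacent exposed sub-rectangle, as in the proof of Lemma~\ref{explode}. I would take $\mathscr{X}$ to be a suitable such collection and observe that destabilizing $(\Sigma',\tau')$ simultaneously along all $X\in\mathscr{X}$ keeps it a concatenation with surface $R_*$ and with each part still stably equivalent to $(\Sigma_i,\tau_i)$, by the Exploding-Lemma argument applied to the two pieces separately (cutting along the $X$'s never meets $R_*$ or $\tau'$). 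It remains to see that the resulting $(\Sigma'',\tau'')$ has $num(\Sigma'')<num(\Sigma)=2$; for this I would track the two arcs $\rho=R\cap(\Sigma\times\{1\})$ and $\alpha=W\cap(\Sigma\times\{1\})$ on $\Sigma\times\{1\}$, their crossing pattern at the top endpoints of the bunch intervals, and the effect of re-gluing along $\rho_*=R_*\cap(\Sigma\times\{1\})$ and cutting along the tops of the chosen $W_j$: organized by the parity observation above, the net effect is that $\Sigma''\times\{1\}$ still carries an arc from a descendant of $C_1$ to a descendant of $C_2$, which forces $num(\Sigma'')=1$.

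The main obstacle is exactly this last step. Because a destabilization along a disc can \emph{raise} the number of distinguished boundary components, cutting along too many of the $W_j$ could re-separate $C_1$ from $C_2$ in $\Sigma''$, so $\mathscr{X}$ must be chosen to cut ``enough, but not more''; making this precise is the combinatorial heart of the argument and reduces to the one-dimensional bookkeeping of $\alpha$ against $\rho$ on $\Sigma\times\{1\}$. One must also treat the two mirror-image cases according to whether $\Sigma_1$ or $\Sigma_2$ has a single distinguished boundary component, together with the boundary cases in which $R_*$ is an extreme sub-rectangle of $R$ --- precisely where the ``push off $R_*$'' step is indispensable.
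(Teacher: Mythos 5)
Your Part~(1) argument matches the paper, which also reduces it to the ``alternate construction'' step in the proof of the Exploding Lemma. For Part~(2) you set up the right geometry (the two free sides of $W$ landing on the two distinguished boundary components, the alternation of the $W_j$ across $R$, the pushing off of $R_*$), and you correctly isolate the crux: $\mathscr{X}$ must be chosen to cut ``enough, but not more.'' But you leave that choice unresolved, and the parity observation on its own does not determine $\mathscr{X}$. This is a genuine gap, not a routine omission, and it is exactly where the paper's proof does its work.

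The paper closes the gap with a short graph-theoretic device. Form a graph $G$ whose vertices are the boundary components of $(\partial\Sigma')\times I$ that meet some sub-rectangle in $\mathscr{W}$, and whose edges are the elements of $\mathscr{W}$, an edge joining the one or two boundary components that its opposite sides touch. Since $W$ is connected, $G$ is connected, and both distinguished components $U_0$ (containing $\tau(0)$) and $U_1$ (containing $\tau(1)$) appear as vertices; since $W$ is num reducing one has $\emph{num}(\Sigma)=2$, so $U_0\neq U_1$. Take $\mathscr{X}$ to be the set of sub-rectangles labelling the edges of a path in $G$ from $v_{U_0}$ to $v_{U_1}$. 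Because a path is a tree, cutting along exactly these sub-rectangles merges all of its vertices into a single boundary component and re-separates nothing, so $\emph{num}(\Sigma'')=1<\emph{num}(\Sigma)$. That the chosen sub-rectangles are pairwise disjoint, miss $R_*$ and $\tau'$, and preserve the stability classes of the left and right parts then follows as in your sketch by the Exploding-Lemma argument applied on each side of $R_*$. In short: the path in $G$, not a parity-based or maximal selection, is the precise realization of your ``enough, but not more'' criterion, and supplying it is what your proposal is missing.
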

\begin{proof} The first claim follows exactly as in the proof of Lemma \ref{explode}. For the second claim, we define a graph $G$ as follows.  Let $U$ be a component of $(\partial\Sigma) \times I$. Define $v_U$ to be a vertex of $G$ if $U$ intersects some sub-rectangle in $\mathscr{W}$. Two vertices $v_{U_1}$, $v_{U_2}$ (not necessarily distinct) are connected by an edge if and only if there is a sub-rectangle whose opposite sides intersect $U_1$ and $U_2$. In particular, there is a one-to-one correspondence between the edges of $G$ and the elements of $\mathscr{W}$. Note that $G$ is connected.  Let $U_0$ be the distinguished boundary component containing $\tau(0)$ and $U_{1}$ the distinguished boundary component containing $\tau(1)$. By hypothesis, $U_0 \ne U_1$. We choose a path from $v_{U_0}$ to $v_{U_1}$ in $G$. Let $\mathscr{X}$ be the subset of $\mathscr{W}$ corresponding to the edges of the path. It is easy to see that cutting along the sub-rectangles in $\mathscr{X}$ reduces the number of distinguished boundary components from $2$ to $1$.
\end{proof}

\begin{proof}[Proof of Lemma \ref{main_2}] Let $R$ be the vertically proper embedded disc $I \times I$ defining the given concatenation $(\Sigma_1 \# \Sigma_2,\tau_1 \# \tau_2)$. By Lemma \ref{lemma_disc_genus_red}, there is exactly one connected component $C$ of $(\partial \Sigma) \times I$ for which $R \cap C \ne \emptyset$. Moreover, $R \cap C$ consists of two disjoint intervals $I_1$ and $I_2$.  Also note that $\tau$ and $R$ intersect transversally and that $|\text{im}(\tau) \cap R|=1$.
\newline
\newline
The proof of the theorem has several key steps. The key steps are:
\begin{enumerate}[(I)]
\item It is shown that when $W$ is a vertically proper embedded disc or annulus, that we may always assume that $W \cap R$ is a (possibly empty) vertical bunch.
\item If $(\Sigma,\tau)$ is not of minimal genus, we take any vertically proper embedded surface which is genus reducing and apply Lemma \ref{explode}. This step is repeated until we have a long knot $(\Sigma',\tau')$ which is of minimal genus.
\item If $(\Sigma',\tau')$ is of minimal genus but does not have the minimal number of distinguished boundary components, we take a vertically proper embedded destabilization disc which is num reducing. We then apply Lemma \ref{num_reduce} to obtain a long knot $(\Sigma_0,\tau_0)$ which has both minimal genus and minimal number of distinguished boundary components among those representatives of minimal genus. 
\end{enumerate}
By Lemma \ref{main_2}, such a sequence exists. By Lemmas \ref{explode} and \ref{num_reduce}, we have that $(\Sigma_0,\tau_0)=(\Sigma_1'\#\Sigma_2',\tau_1'\#\tau_2')$ where $(\Sigma_i',\tau_i')\sim_s(\Sigma_i,\tau_i)$ for $i=1,2$. Then this would complete the proof of the lemma. Since the validity of Steps (II) and (III) follow (via Lemmas \ref{explode} and \ref{num_reduce}) from the validity of Step (I), it suffices to prove the validity of Step (I).
\newline
\newline
\underline{Step (I):} Suppose that $W$ is a vertically proper embedded destabilization surface (which is either a disc or an annulus).  By a general position argument, we may suppose that $W$ and $R$ are chosen among vertically proper embedded destabilization surfaces $W'$ and decomposition surfaces $R'$, ambient isotopic to $W$ and $R$, respectively, so that $W \cap R$ has the fewest number of connected components. Note that $R'$ preserves the decomposition in the sense that it defines a concatenation whose left and right parts are stably equivalent to the left and right parts of the concatenation of $R$. Also, $W'$ is genus reducing, num reducing, or inessential, whenever $W$ is genus reducing, num reducing, or inessential, respectively.
\newline
\newline
If $W \cap R=\emptyset$, then $W$ and $R$ satisfy the hypotheses of (I). Suppose then that $W \cap R \ne \emptyset$.
\newline
\newline
There are two cases to consider, depending on whether $W$ is an annulus or a disc.
\begin{enumerate}[(A)]
\item $W$ is a vertically proper embedded annulus.
\item $W$ is a vertically proper embedded disc.
\end{enumerate}

\noindent Each case is considered in turn.
\newline
\newline
\underline{Case (A): $W$ is an annulus:} Since $W$ is an annulus, destabilizing along $W$ does not change the number of distinguished boundary components of $(\Sigma,\tau)$.  Now, each component of $W \cap R$ is either:
\begin{enumerate}[(i)]
\item an embedded circle,
\item a partisan interval on $R$, or
\item a vertical interval on $R$.
\end{enumerate}
Note that since $W$ is an annulus, there are no intersections in cornered intervals or horizontal intervals.
\newline
\newline
\underline{Sub-case (A.i):} Suppose that $W \cap R$ contains an embedded circle $c$. Since $R$ is a disc, it follows that $c$ must bound a disc $D_1$ in $R$.  Choose $c$ so that it is innermost in the sense that $D_1$ contains no other components of $W \cap R$. There are two cases to consider: (1) $c$ is inessential in $W$ and (2) $c$ is a horizontal in $W$.
\newline
\newline
Suppose first that $c$ is inessential in $W$, so that $c$ bounds a disc $D_2$ in $W$ and is innermost in $W$. Then by Lemma \ref{two_inessential}, $D_1 \cup D_2$ is a $2$-sphere bounding a $3$-ball $B_{12}$. If $\text{im}(\tau) \cap D_1 \ne \emptyset$, then there must be at least one other point in $D_1 \cup D_2$ which intersects $\tau$.  However, since $W \cap \text{im}(\tau)=\emptyset$, this is impossible. Thus, $(D_1 \cup D_2) \cap \text{im}(\tau)=\emptyset$. Let $R_1=R$ and $R_2=W$ and apply Lemma \ref{two_inessential}. There is a vertically proper embedded annulus $W'$ which is ambient isotopic to $W$ and for which $W'\cap R$ has fewer connected components than $W \cap R$. As this is a contradiction, we conclude that $W \cap R$ can have no components which are inessential circles in both $W$ and $R$.
\newline
\newline
Suppose then that $c$ is horizontal in $W$. By Lemma \ref{cutsoffthreeball}, $W$ cuts off a $3$-ball $B_{12}$ from $\Sigma \times I$. It follows that $B_{12} \cap \text{im}(\tau)=\emptyset$. Then we may replace $W$ with any vertically proper embedded annulus in $\Sigma \times I$ which cuts off a $3$-ball. Since such an annulus may be chosen so that it has no intersections with $R$, we have a contradiction of the assumptions on $W$ and $R$. Thus, we may assume that $W \cap R$ contains no components which are embedded circles.
\newline
\newline
\underline{Sub-case (A.ii):} By sub-case (A.i), we may assume that $W \cap R$ contains no components which are embedded circles. Let $e$ be a partisan interval of $W \cap R$ in $R$. Since $W$ is vertically proper embedded, there is a $t \in \{0,1\}$ such that $e(0)$ and $e(1)$ are both in $\Sigma \times \{t\} \subset \Sigma \times I$. Let $R_1=R$ and $R_2=W$ and apply Lemma \ref{partisan_top}. Let $D_1,D_2,D_3$ be as in Lemma \ref{partisan_top}. We may suppose that $e$ is chosen so that it is innermost in the sense that $D_1 \cup D_2$ contains no connected components of $R \cap W$ other than $e$.  Since $\tau(0)$ and $\tau(1)$ are in $(\partial \Sigma) \times I$, it follows that $D_3 \cap \text{im}(\tau)=\emptyset$. Hence, $B_{12} \cap \text{im}(\tau)=\emptyset$, for otherwise $D_1 \cup D_2 \subset R \cup W$ would have more than one intersection with $\tau$. By Lemma \ref{partisan_top}, there is a vertically proper embedded destabilization surface $W'$ which is ambient isotopic to $W$ and for which $R \cap W'$ has fewer connected components than $R \cap W$. As this contradicts our choice of $W$ and $R$, we may conclude that $R \cap W$ contains no components which are partisan intervals.
\newline
\newline
\underline{Sub-case (A.iii):} By sub-cases (A.i) and (A.ii), we may assume that $W \cap R$ contains no embedded circles or partisan intervals. It follows that $W \cap R$ is a non-empty vertical bunch. Therefore, when $W$ is an annulus, we may assume that $W \cap R$ is a vertical bunch. This completes Step (I.A).
\newline
\newline
\underline{Case (B): W is a disc:} By hypothesis, $R$ is a disc $I \times I$.  Then each connected component of $R \cap W$ is either:
\begin{enumerate}[(i)]
\item an embedded circle, 
\item a partisan interval,
\item a cornered interval, 
\item a horizontal interval, or
\item a vertical interval.
\end{enumerate}  
We argue in the stated order that such intersections violate the hypotheses on $R$, $W$, $(\Sigma,\tau)$, or $(\Sigma_i,\tau_i)$ for $i=1,2$.
\newline
\newline
\underline{Sub-case (B.i):} Let $c$ be a connected component of $R \cap W$ which is an embedded circle. Then $c$ is inessential in both $R$ and $W$. Let $R_1=R$, $R_2=W$ and $D_i \subset R_i$ discs such that $c=\partial D_i$ for $i=1,2$. By Lemma \ref{two_inessential}, $D_1 \cup D_2$ is a 2-sphere which bounds a 3-ball $B_{12}$. There are two possibilities: $B_{12} \cap \text{im}(\tau) \ne \emptyset$ and $B_{12}\cap \text{im}(\tau) = \emptyset$. If $B_{12} \cap \text{im}(\tau) \ne \emptyset$, then we must have that $D_1 \cap \text{im}(\tau) \ne \emptyset$. Since $\text{im}(\tau) \cap (D_1 \cup D_2)$ must contain at least two points, this contradicts the fact that $W \cap \text{im}(\tau)=\emptyset$ and $|R \cap \text{im}(\tau)|=1$. Therefore, we must have that $B_{12} \cap \text{im}(\tau)=\emptyset$.  By Lemma \ref{two_inessential}, there is a vertically proper embedded disc $W'$ which is ambient isotopic to $W$, has no intersections with $\tau$, and such that $W'\cap R$ has fewer connected components than $W \cap R$. As this contradicts the hypotheses on $W$ and $R$, we conclude that $W \cap R$ has no connected components which are embedded circles.
\newline
\newline
\underline{Sub-case (B.ii):} By (B.i), we may assume that $R \cap W$ contains no connected components which are embedded discs.  Let $e$ be a partisan interval in $W \cap R$. There are two cases to consider: (1) there is a $t\in\{0,1\}$ such that both endpoints of $e$ are in $\Sigma \times \{t\} \subset \Sigma \times I$ and (2) there is exactly one component $C$ of $(\partial \Sigma) \times I$ which contains both endpoints of $e$.
\newline
\newline
Suppose that case (1) holds. Let $R_1=R$ and $R_2=W$.  We apply Lemma \ref{partisan_top} to $(\Sigma,\tau)$, $R_1$ and $R_2$. We may suppose that $e$ is innermost in the sense that $D_1 \cup D_2$ contains no other partisan intervals or embedded circles of $R \cap W$. Let $B_{12}$ be as in (2) of Lemma \ref{partisan_top}. Then either $B_{12}\cap \text{im}(\tau)=\emptyset$ or $B_{12}\cap \text{im}(\tau) \ne \emptyset$. If $B_{12}\cap \text{im}(\tau)=\emptyset$, it follows from Lemma \ref{partisan_top} that there is a vertically proper embedded disc $W'$ which is ambient isotopic to $W$, has no intersections with $\tau$, and such that $W' \cap R$ has fewer connected components than $W \cap R$. As this contradicts the choice of $R$ and $W$, it follows that $B_{12} \cap \text{im}(\tau) \ne \emptyset$.  Then $D_1 \cup D_2 \cup D_3$ must intersect $\text{im}(\tau)$ in at least two points. Since $W \cap \text{im}(\tau)=\emptyset$, $|R \cap \text{im}(\tau)|=1$, and $(\Sigma \times \{t\}) \cap \text{im}(\tau)=\emptyset$, this cannot occur. Thus, there can be no partisan intervals in $R \cap W$ as in case (1).
\newline
\newline
Suppose that case (2) holds. Recall that $R$ is disconnecting, so that there is exactly one component $C$ of $(\partial \Sigma) \times I$ such that $R \cap C \ne \emptyset$. Moreover, $R\cap C$ consists of two disjoint intervals. Let $R_1=R$ and $R_2=W$. We apply Lemma \ref{partisan_side} to $(\Sigma,\tau)$, $R_1$, and $R_2$. Let $D_1$, $D_2$, $D_3$ be as in Lemma \ref{partisan_side}. We may assume that $e$ is innermost in the sense that $D_1 \cup D_2$ contains no connected components of $R \cap W$ other than $e$. Let $B_{12}$ be as in (2) of Lemma \ref{partisan_side}. Again, there are two sub-cases to consider: $B_{12}\cap \text{im}(\tau)=\emptyset$ or $B_{12}\cap \text{im}(\tau) \ne \emptyset$. If $B_{12}\cap \text{im}(\tau)=\emptyset$, it follows from Lemma \ref{partisan_side} that there is a vertically proper embedded disc $W'$ which is ambient isotopic to $W$ such that $W' \cap \text{im}(\tau)=\emptyset$ and $W' \cap R$ has fewer connected components than $W \cap R$. As this contradicts the hypotheses on $W \cap R$, we must have that $B_{12}\cap \text{im}(\tau)\ne\emptyset$. Since $W \cap \text{im}(\tau)=\emptyset$, we must have that $D_3 \cap \text{im}(\tau) \ne \emptyset$. Since $R_1$ is disconnecting, we must have that $D_3$ contains only one of $\tau(0)$ and $\tau(1)$. The hypotheses on $R$ imply that $|D_1 \cap \text{im}(\tau)|=1$. Thus, either $\text{im}(\tau_1) \subset B_{12}$ or $\text{im}(\tau_2) \subset B_{12}$.  Using the bead-on-a-string isotopy (Lemma \ref{beadonastring}), we see that either $(\Sigma_1,\tau_1)$ or $(\Sigma_2,\tau_2)$ stabilizes to a long classical knot.  This is a contradiction.  Thus, we conclude that $R \cap W$ has no partisan intervals.
\newline
\newline
\underline{Sub-case (B.iii):} By sub-cases (B.i) and (B.ii), we may assume that $R \cap W$ contains no connected components which are embedded circles or partisan intervals. Suppose that $R \cap W$ contains a connected component which is a cornered interval. Let $R_1=R$ and $R_2=W$. We apply Lemma \ref{corner_lemma} to $(\Sigma,\tau)$, $R_1$, and $R_2$. We may choose a cornered interval $e$ so that $D_1 \cup D_2$ contains no connected components of $R \cap W$ other than $e$. Let $B_{12}$ be the 3-ball in (2) of Lemma \ref{corner_lemma}. Then either $B_{12} \cap \text{im}(\tau)=\emptyset$ or $B_{12} \cap \text{im}(\tau) \ne \emptyset$. If $B_{12} \cap \text{im}(\tau)=\emptyset$, it follows from Lemma \ref{corner_lemma} that there is a vertically proper embedded disc $W'$ which is ambient isotopic to $W$, has no intersections with $\tau$, and has the property that that $W' \cap R$ has fewer connected components than $W \cap R$. As this contradicts our choice of $W$ and $R$, we must have that $B_{12} \cap \text{im}(\tau) \ne \emptyset$. By hypothesis, we must have that $D_2 \cap \text{im}(\tau)=\emptyset$ and $D_3 \cap \text{im}(\tau)=\emptyset$. Since $R$ is disconnecting, $E_{12}$ (see Lemma \ref{corner_lemma}) can contain at most one of $\tau(0)$ and $\tau(1)$. Thus, $D_1 \cap \text{im}(\tau)=R \cap \text{im}(\tau)$. It follows that either $\text{im}(\tau_1) \subset B_{12}$ or $\text{im}(\tau_2) \subset B_{12}$. The bead-on-a-string isotopy (Lemma \ref{beadonastring}) then implies that either $(\Sigma_1,\tau_1)$ or $(\Sigma_2,\tau_2)$ stabilizes to a long classical knot. This contradicts the hypotheses on $(\Sigma_1,\tau_1)$ and $(\Sigma_2,\tau_2)$. Thus we may conclude that $R \cap W$ contains no connected components which are cornered intervals.  
\newline
\newline
\underline{Sub-case (B.iv):} By sub-cases (B.i), (B.ii), and (B.iii), we may assume that $R \cap W$ contains no connected components which are embedded circles, partisan intervals, or cornered intervals. Suppose that $R \cap W$ has a connected component which is a horizontal interval. Hence, $R \cap W$ cannot have a component which is a vertical interval and a component which is a horizontal interval. From these observations, it follows that $R \cap W$ is a non-empty horizontal bunch.  
\newline
\newline
Since $R$ is disconnecting it follows from Lemma \ref{lemma_disc_genus_red} that there is exactly one component $C$ of $(\partial \Sigma) \times I$ such that $R \cap C \ne \emptyset$. Let $R_1=R$ and $R_2=W$. We apply Lemma \ref{horiz_bunches} to $(\Sigma,\tau)$, $R_1$, and $R_2$. Let $r_1^j \subset R_1$ and $r_{2}^k \subset R_2 $ be the rectangles obtained from (1) of Lemma \ref{horiz_bunches}.
\newline
\newline
Suppose first that $j=k=1$ or $j=k=n+1$. Let $B_{12}$ be the 3-ball of Lemma \ref{horiz_bunches} (2.b). There are two sub-cases to consider: $B_{12} \cap \text{im}(\tau)=\emptyset$ and $B_{12} \cap \text{im}(\tau) \ne \emptyset$.  If $B_{12} \cap \text{im}(\tau)=\emptyset$, it follows from Lemma \ref{horiz_bunches} that there is a vertically proper embedded disc $W'$ which is ambient isotopic to $W$, has no intersections with $\tau$, and satisfies the property that $W' \cap R$ has fewer connected components than $W \cap R$. This contradicts the hypotheses on $R$ and $W$. 
\newline
\newline
Suppose then that $B_{12} \cap \text{im}(\tau) \ne \emptyset$. Since $R$ is disconnecting, $D_1$ and $D_2$ may each contain at most one endpoint of $\tau$. Suppose that $D_1$ contains an endpoint of $\tau$ and $D_2$ contains the other endpoint of $\tau$. By hypothesis on $W$, $r_2^k \cap \text{im}(\tau)=\emptyset$. If $r_1^j \cap \text{im}(\tau)=\emptyset$, then $\text{im}(\tau) \subset B_{12}$.  This implies that $(\Sigma,\tau)$ stabilizes to a long classical knot, which contradicts the hypotheses on $\tau$.  Thus, we must have that $r_1^j \cap \text{im}(\tau)=R \cap \text{im}(\tau)$.  However, since $|R \cap \text{im}(\tau)|=1$ and $\tau(0),\tau(1) \in \partial B_{12}$, this contradicts the fact that the unit interval $I$ (which is the domain of $\tau$) is connected. Therefore, either $D_1$ or $D_2$ contains an endpoint of $\tau$ but not both. Without loss of generality, suppose that $D_1$ contains an endpoint of $\tau$. Since the unit interval is connected and $r_2^k \cap \text{im}(\tau)=\emptyset$, we must have that $r_1^j \cap \text{im}(\tau)=R \cap \text{im}(\tau)$. Thus, either $\text{im}(\tau_1) \subset B_{12}$ or $\text{im}(\tau_2) \subset B_{12}$. Applying the bead-on-a-string isotopy (Lemma \ref{beadonastring}), we conclude that either $(\Sigma_1,\tau_1)$ or $(\Sigma_2,\tau_2)$ stabilizes to a long classical knot. This contradicts the hypotheses on $(\Sigma_1,\tau_1)$ and $(\Sigma_2,\tau_2)$. Finally, if neither $D_1$ nor $D_2$ contains an endpoint of $\tau$, then $\partial B_{12} \cap \text{im}(\tau)$ contains either 0 or 1 point, both of which are impossible when $B_{12} \cap \text{im}(\tau) \ne \emptyset$. We have thus proved that for the rectangles $r_1^j$ and $r_2^k$, we cannot have that $j=k=1$ or $j=k=n+1$.
\newline
\newline
The remaining possibility from Lemma \ref{horiz_bunches} is that $1<j,k<n+1$. The proof in this case follows nearly identically to the case that $j,k=1$ or $j,k=n+1$. Hence we leave it as an exercise for the reader.
\newline
\newline
It follows that $R \cap W$ has no connected components which are horizontal intervals.
\newline

\noindent \underline{Sub-case (B.v):} By sub-cases (B.i), (B.ii), and (B.iii), we conclude that $R \cap W$ contains no components which are embedded circles, partisan intervals, or cornered intervals. By sub-case (B.iv), if $W \cap R$ also contains no vertical intervals, then it has no connected components which are horizontal intervals. Thus, the only remaining possibility is that $W \cap R$ is a non-empty vertical bunch.
\newline
\newline
This completes the proof of Step (I).  By the observations recorded at the beginning of the proof, the proof of the lemma is complete. 
\end{proof} 
\subsection{Theorem \ref{main_3}: Non-Commutativity of Non-Classical Linear Primes} \label{sec_non_commute} In this section, we prove that if $A,B,C,D$ are non-classical linear primes such that $A\# B$ is non-classical and $A \#B \leftrightharpoons C \# D$, then $A \leftrightharpoons C$ and $B \leftrightharpoons D$. An immediate consequence is that distinct non-classical linearly prime long virtual knots do not commute in the monoid of long virtual knots. The proof makes use of Lemmas \ref{prime_corr}, \ref{main_1}, and \ref{main_2}.

\begin{proof}[Proof of Theorem \ref{main_3}] Let $K_1,K_2,K_3,K_4$ be non-classical linearly prime long virtual knots such that $K_1 \# K_2$ is non-classical. Suppose that $\mathscr{F}(K_i)\sim_s(\Sigma_i,\tau_i)$ for $i=1,2,3,4$. Let $(\Sigma_{12},\tau_{12})=(\Sigma_1\#\Sigma_2,\tau_1\#\tau_2)$ and $(\Sigma_{34},\tau_{34})=(\Sigma_{3} \# \Sigma_4,\tau_3\#\tau_4)$. Then $(\Sigma_{12},\tau_{12})\sim_s (\Sigma_{34},\tau_{34})$, $\mathscr{F}^{-1}(\Sigma_{12},\tau_{12})\leftrightharpoons K_1\# K_2$, and $\mathscr{F}^{-1}(\Sigma_{34},\tau_{34})\leftrightharpoons K_3 \# K_4$. By Lemma \ref{main_2}, there is an irreducible descendant $(\Sigma_0,\tau_0)$ of $(\Sigma_{12},\tau_{12})$ such that $(\Sigma_0,\tau_0)=(\Sigma_1'\#\Sigma_2',\tau_1'\#\tau_2')$ where $(\Sigma_i',\tau_i')\sim_s (\Sigma_i,\tau_i)$ for $i=1,2$. By Lemma \ref{main_1}, we have that $(\Sigma_0,\tau_0)$ is also an irreducible descendant of $(\Sigma_{34},\tau_{34})$. Applying Lemma \ref{main_2} to $(\Sigma_{34},\tau_{34})$, we have that $(\Sigma_0,\tau_0)=(\Sigma_3'\#\Sigma_4',\tau_3'\#\tau_4')$ where $(\Sigma_i',\tau_i') \sim_s (\Sigma_i,\tau_i)$ for $i=3,4$. Let $R_{12}$ be the vertically proper embedded disc defining the concatenation $(\Sigma_0,\tau_0)=(\Sigma_1'\#\Sigma_2',\tau_1'\#\tau_2')$. Let $R_{34}$ be the vertically proper embedded disc defining the concatenation $(\Sigma_0,\tau_0)=(\Sigma_3'\#\Sigma_4',\tau_3'\#\tau_4')$.
\newline
\newline
By a general position argument, the discs $R_{12}$ and $R_{34}$ may be chosen so that they intersect transversely. In addition, we suppose that $R_{12}$ and $R_{34}$ are chosen among all decomposition discs $R_{12}'$ and $R_{34}'$, ambient isotopic to $R_{12}$ and $R_{34}$, respectively, which satisfy the property that the number of connected components of $R_{12}' \cap R_{34}'$ is at least the number of connected components of $R_{12} \cap R_{34}$. We note that $R_{12}'$ ($R_{34}'$) preserves the decomposition of $R_{12}$ (resp. $R_{34}$) in the sense that it defines a concatenation whose left and right parts are stably equivalent to the left and right parts of $R_{12}$ (resp. $R_{34}$). Note that we may also assume that $\text{im}(\tau_0)\cap R_{12} \cap R_{34}=\emptyset$.
\newline
\newline
Suppose first that $R_{12} \cap R_{34}=\emptyset$. Then either $R_{12} \subset \Sigma_3' \times I$ or $R_{12} \subset \Sigma_4' \times I$. Suppose that $R_{12} \subset \Sigma_3' \times I$. Since $R_{12}$ is vertically proper embedded, disconnecting, and $|R_{12} \cap \text{im}(\tau_0)|=1$, we have that $R_{12}$ defines a concatenation $(\Sigma_3, \tau_3)=(\Gamma_1 \# \Gamma_2,\gamma_1\#\gamma_2)$. Since $K_3$ is linearly prime, it follows that either $(\Gamma_1,\gamma_1)$ or $(\Gamma_2,\gamma_2)$ stabilizes to the long trivial knot. Since $\tau_0'(0)=\tau_1'(0)=\tau_3'(0)$, and $\tau_1$ does not stabilize to the long trivial knot, we must have that $(\Gamma_1,\gamma_1)=(\Sigma_1',\tau_1')$ and that $(\Gamma_2,\gamma_2)$ stabilizes to a long trivial knot $G_1$.  Then we have that $K_3 \leftrightharpoons K_1 \# G_1 \leftrightharpoons K_1$.  Now, since $R_{12} \subset \Sigma_3' \times I$, it follows that $R_{34} \subset \Sigma_2' \times I$. A similar argument shows that $K_2 \leftrightharpoons K_4$. If $R_{12} \subset \Sigma_4' \times I$, it similarly follows that $K_1 \leftrightharpoons K_2$ and $K_3 \leftrightharpoons K_4$. Thus, the theorem holds if $R_{12} \cap R_{34}=\emptyset$.
\newline
\newline
Now suppose that $R_{12} \cap R_{34}$ is a non-empty vertical bunch. The components of $R_{12} \cap R_{34}$ separate $R_{12}$ and $R_{34}$ into sub-rectangles $\{R_{12}^1,\ldots,R_{12}^m\}$ and $\{R_{34}^1,\ldots,R_{34}^m\}$. Let $R_{12}^*$ be the sub-rectangle of $R_{12}$ that contains $R_{12} \cap \text{im}(\tau_0)$ and $R_{34}^*$ be the sub-rectangle of $R_{34}$ that contains $R_{34} \cap \text{im}(\tau_0)$. Now, cut along $R_{12}$ and identify the two copies of $R_{12}^*$. This gives a long knot $(\Sigma_0',\tau_0')=(\Sigma_1''\#\Sigma_2'',\tau_1''\#\tau_2'')$ where $(\Sigma_0',\tau_0')\sim_s (\Sigma_0,\tau_0)$ and $(\Sigma_i'',\tau_i'') \sim_s (\Sigma_i,\tau_i)$ for $i=1,2$. For each of the sub-rectangles $R_{34}^j$, there is a corresponding vertically proper embedded disc in $\Sigma_0' \times I$. We will also denote each corresponding disc by $R_{34}^j$. Each sub-rectangle may be considered as a vertically proper embedded disc in either $\Sigma_1'' \times I$ or $\Sigma_2'' \times I$. We destabilize along each $R_{34}^j$ to obtain a long knot $(\Sigma_0'',\tau_0'')\sim_s (\Sigma_0',\tau_0')$. Then either $R_{34}^* \subset \Sigma_1' \times I$ or $R_{34}^* \subset \Sigma_2' \times I$. If $R_{12}^* \cap R_{34}* =\emptyset$, then the result follows as in the case that $R_{12} \cap R_{34}=\emptyset$. If $R_{12}^* \cap R_{34}^* \ne \emptyset$, then some side of $R_{12}^*$ coincides with a side of $R_{34}^*$. In this case, we may push $R_{12}^*$ slightly off of $R_{34}^*$ to obtain the case that $R_{12}^* \cap R_{34}^*=\emptyset$.
\newline
\newline
It remains to prove that our hypotheses on $R_{12}$ and $R_{34}$ imply that that every component of $R_{12} \cap R_{34}$ is a vertical interval. We consider each of the possible types of other intersections and show they lead to contradictions. The remaining possibilities are:
\begin{enumerate}[(i)]
\item an embedded circle,
\item a partisan interval,
\item a cornered interval, or
\item a horizontal interval.
\end{enumerate}
We consider each possibility in the stated order.
\newline
\newline
\underline{Case (i):} Let $c$ be a component of $R_{12} \cap R_{34}$ which is an embedded circle.  Then $c$ is inessential in both $R_{12}$ and $R_{34}$. Then $c$ bounds a disc $D_1$ on $R_{12}$ and $D_2$ on $R_{34}$. We suppose that $c$ is innermost in the sense that $D_1 \cup D_2$ contains no components of $R_{12} \cap R_{34}$ other than $c$. We apply Lemma \ref{two_inessential} to $R_1=R_{12}$, $R_2=R_{34}$, $D_1$ and $D_2$. Let $B_{12}$ be the 3-ball bounded by $D_1 \cup D_2$. 
\newline
\newline
Suppose that $B_{12} \cap \text{im}(\tau_0)=\emptyset$. By Lemma \ref{two_inessential}, there is a vertically proper embedded decomposition disc $R_{12}'$ ambient isotopic to $R_{12}$ such that $R_{12}' \cap R_{34}$ has fewer connected components than $R_{12} \cap R_{34}$. As this contradicts the choice of $R_{12}$ and $R_{34}$, we cannot have $B_{12} \cap \text{im}(\tau_0)=\emptyset$.
\newline
\newline
Suppose then that $B_{12} \cap \text{im}(\tau) \ne \emptyset$. Since $B_{12}$ is a 3-ball and $\tau_0$ intersects $R_{12}$ and $R_{34}$ transversely at one point each, we must have that $|D_1 \cap \text{im}(\tau_0)|=1$ and $|D_2 \cap \text{im}(\tau_0)|=1$. Note that $B_{12} \subset \Sigma_i' \times I$ for some $i=1,2$. It follows that $B_{12}$ contains the endpoint $\tau_i'(2-i)$. We apply the bead-on-a-string isotopy (see Lemma \ref{beadonastring}) to move $B_{12}$ towards the endpoint $\tau_i'(2-i)$. This gives a decomposition $(\Sigma_i',\tau_i')=(\Gamma_1 \# \Gamma_2, \gamma_1 \# \gamma_2)$. Since $K_i$ is linearly prime, it must be that either $(\Gamma_1,\gamma_1)$ or $(\Gamma_2,\gamma_2)$ stabilizes to a long trivial knot. Moreover, this isotopy to triviality can be done inside the 3-ball $B_{12}$ so that the arc $\text{im}(\tau_0)\cap B_{12}$ can be taken to be an unknotted arc. Applying Lemma \ref{two_inessential} we get a vertically proper decomposition disc $R_{12}'$ which is ambient isotopic to $R_{12}$, has one intersection with $\tau_0$, and satisfies the property that $R_{12}'\cap R_{34}$ has fewer connected components than $R_{12} \cap R_{34}$. Since this contradicts our choice of $R_{12}$ and $R_{34}$, we conclude that $R_{12} \cap R_{34}$ contains no connected components which are embedded circles. 
\newline
\newline
\underline{Case (ii):} By Case (i), we may assume that $R_{12} \cap R_{34}$ has no connected components which are embedded circles.  Let $e$ be a connected component of $R_{12} \cap R_{34}$ which is a partisan interval. There are two sub-cases to consider:
\begin{enumerate}[(a)]
\item both endpoints of $e$ intersect $\Sigma_0 \times \{t\}$ for some $t \in \{0,1\}$, or
\item both endpoints of $e$ intersect the same connected component of $(\partial \Sigma_0) \times I$.
\end{enumerate}
The possibilities are disjoint, so they may be considered in any order.
\newline
\newline
\underline{Sub-case (ii.a):} Set $R_1=R_{12}$ and $R_2=R_{34}$ and let $D_1 \subset R_{12}$ and $D_2 \subset R_{34}$ be the discs defined in Lemma \ref{partisan_top}. Let $B_{12}$ be the 3-ball given in Lemma \ref{partisan_top} (2). Suppose that $B_{12} \cap \text{im}(\tau_0)=\emptyset$. By Lemma \ref{partisan_top}, there is a vertically proper embedded decomposition disc $R_{12}'$ which is ambient isotopic to $R_{12}$ and such that $R_{12}' \cap R_{34}$ has fewer connected components than $R_{12} \cap R_{34}$. As this contradicts the hypotheses on $R_{12}$ and $R_{34}$, we cannot have that $B_{12} \cap \text{im}(\tau_0)=\emptyset$.
\newline
\newline
Suppose that $B_{12} \cap \text{im}(\tau_0) \ne \emptyset$. Since $\tau_0$ intersects $R_{12}$ and $R_{34}$ transversely and $\tau_0$ does not intersect $\Sigma \times \partial I$, we must have that $|(D_1 \cup D_2 \cup D_3) \cap \text{im}(\tau_0)|=2$. Using an argument similar to Case (i), we can show that this is impossible. We can find a vertically proper embedded decomposition disc $R_{12}'$ ambient isotopic to $R_{12}$ such that $R_{12}' \cap R_{34}$ has fewer connected components than $R_{12} \cap R_{34}$. Thus, we conclude that $R_{12} \cap R_{34}$ can have no connected components which are partisan intervals and satisfy (a).
\newline
\newline
\underline{Sub-case (ii.b):} Since $R_{12}$ is disconnecting, Lemma \ref{lemma_disc_genus_red} implies that there is exactly one component $C$ of $(\partial \Sigma_0) \times I$ such that $R_{12} \cap C \ne \emptyset$. Apply Lemma \ref{partisan_side} with $R_1=R_{12}$ and $R_2=R_{34}$. Let $B_{12}$ be the 3-ball bounded by the 2-sphere $D_1 \cup D_2 \cup D_3$ of Lemma \ref{partisan_side} (2). There are three non-trivial sub-sub-cases to consider:
\begin{enumerate}[(1)]
\item $\text{im}(\tau_0) \cap B_{12}=\emptyset$,
\item $D_3 \cap \text{im}(\tau_0)=\emptyset$ and $|(D_1 \cup D_2) \cap \text{im}(\tau_0)|=2$, and
\item $|(D_1 \cup D_2) \cap \text{im}(\tau_0)|=1$ and $|D_3 \cap \text{im}(\tau_0)|=1$.
\end{enumerate} 
We shall prove that each sub-sub case leads to a contradiction.
\newline
\newline
\underline{Sub-sub-case (ii.b.1):} Suppose that $\text{im}(\tau_0) \cap B_{12}=\emptyset$. By Lemma \ref{partisan_side}, there is a vertically proper embedded decomposition disc $R_{12}'$ which is ambient isotopic to $R_{12}$ and satisfies the property that $R_{12}' \cap R_{34}$ as fewer connected components than $R_{12} \cap R_{34}$. This contradicts the choice of $R_{12}$ and $R_{34}$.
\newline
\newline
\underline{Sub-sub-case (ii.b.2):} Suppose that $D_3 \cap \text{im}(\tau_0)=\emptyset$ and $|(D_1 \cup D_2) \cap \text{im}(\tau_0)|=2$. Then we must have that $D_1 \cap \text{im}(\tau_0)$ and $D_1 \cap \text{im}(\tau_0)$ each contain exactly one point. Using the fact that $K_1$ and $K_2$ are both linearly prime, we may argue as in Case (i) (and Case (ii.a)) that this leads to a contradiction of the choice of $R_{12}$ and $R_{34}$.
\newline
\newline
\underline{Sub-sub-case (ii.b.3):} Suppose that $|(D_1 \cup D_2) \cap \text{im}(\tau_0)|=1$ and $|D_3 \cap \text{im}(\tau_0)|=1$. Suppose that $D_1 \cap \text{im}(\tau_0) \ne \emptyset$. The intersection of $D_3 \subset C$ and $\text{im}(\tau_0)$ must be either $\tau_0(0)$ or $\tau_0(1)$. It follows that either $\text{im}(\tau_1') \subset B_{12}$ or $\text{im}(\tau_2') \subset B_{12}$. Applying the bead-on-a-string isotopy, it follows that either $(\Sigma_1',\tau_1')$ or $(\Sigma_2',\tau_2')$ stabilizes to a long classical knot.  This contradicts the hypotheses on $K_1$ and $K_2$.
\newline
\newline
We conclude that $R_{12} \cap R_{34}$ may contain no connected components which are partisan intervals.
\newline
\newline
\underline{Case (iii):} By Cases (i) and (ii), we may assume that $R_{12} \cap R_{34}$ contains no connected components which are embedded circles or partisan intervals. Let $e$ be a cornered interval of $R_{12} \cap R_{34}$. We apply Lemma \ref{corner_lemma} to $e$ and $R_1=R_{12}$, $R_2=R_{34}$. Let $D_1$, $D_2$ be as given in Lemma \ref{corner_lemma}. Since $R_{12} \cap R_{34}$ contains no embedded circles or partisan intervals, we may suppose that $e$ is chosen to be innermost in the sense that $D_1 \cup D_2$ contains no connected components of $R_{12} \cap R_{34}$ other than $e$. Let $D_3$, $E_{12}$ be as given in Lemma \ref{corner_lemma}. Let $B_{12}$ be the 3-ball as given in Lemma \ref{corner_lemma} (2). It is easy to see that there are exactly three non-trivial sub-cases to consider:
\begin{enumerate}[(a)]
\item $\text{im}(\tau_0) \cap B_{12}=\emptyset$,
\item $|(D_1 \cup D_2) \cap \text{im}(\tau_0)|=2$,
\item $|E_{12} \cap \text{im}(\tau_0)|=1$ and either $|D_1 \cap \text{im}(\tau_0)|=1$ or $|D_2 \cap \text{im}(\tau_0)|=1$ (but not both).
\end{enumerate}
We prove that each sub-case leads to a contradiction.
\newline
\newline
\underline{Sub-case (iii.a):} Suppose that $\text{im}(\tau_0) \cap B_{12}=\emptyset$. By Lemma \ref{corner_lemma}, there is a vertically proper embedded disc $R_{34}'$ which is ambient isotopic to $R_{34}$ and having the property that $R_{12} \cap R_{34}'$ has fewer connected components than $R_{12} \cap R_{34}$. This contradicts the choice of $R_{12}$ and $R_{34}$.
\newline
\newline
\underline{Sub-case (iii.b):} Suppose that $|(D_1 \cup D_2) \cap \text{im}(\tau_0)|=2$. Since $|R_{12} \cap \text{im}(\tau_0)|=1$ and $|R_{34} \cap \text{im}(\tau_0)|=1$, we must have that $|D_{1} \cap \text{im}(\tau_0)|=1$ and $|D_{2} \cap \text{im}(\tau_0)|=1$. It follows as in Case (i) that since $K_1$ and $K_2$ are linearly prime, $B_{12} \cap \text{im}(\tau_0)$ is an unknotted arc. By Lemma \ref{corner_lemma}, there is a  vertically proper embedded disc $R_{34}'$ isotopic to $R_{34}$ and satisfying the property that $R_{12} \cap R_{34}'$ has fewer connected components than $R_{12} \cap R_{34}$. This contradicts the choice of $R_{12}$ and $R_{34}$.
\newline
\newline
\underline{Sub-case (iii.c):} Suppose that $|E_{12} \cap \text{im}(\tau_0)|=1$ and either $|D_1 \cap \text{im}(\tau_0)|=1$ or $|D_1 \cap \text{im}(\tau_0)|=1$ (but not both). By symmetry, it suffices to only consider the case that $|D_1 \cap \text{im}(\tau_0)|=1$ and $|D_2 \cap \text{im}(\tau_0)|=0$. The intersection of $\tau_0$ with $E_{12}$ must be either $\tau_0(0)$ or $\tau_0(1)$. Since $D_1$ contains $\tau_1'(1)=\tau_2'(0)$, it follows that either $\text{im}(\tau_1') \subset B_{12}$ or $\text{im}(\tau_2') \subset B_{12}$. Using the bead-on-a-string isotopy (Lemma \ref{beadonastring}) applied to $B_{12}$, it follows that either $(\Sigma_1',\tau_1')$ or $(\Sigma_2',\tau_2')$ stabilizes to a long classical knot. However, $K_1$ and $K_2$ are non-classical. This is a contradiction.
\newline
\newline
Thus, $R_{12} \cap R_{34}$ can contain no connected components which are cornered intervals.
\newline
\newline
\underline{Case (iv):} By Cases (i), (ii), and (iii), we may assume that $R_{12} \cap R_{34}$ contains no connected components which are embedded circles, partisan intervals or cornered intervals.  Let $e$ be a horizontal interval in $R_{12} \cap R_{34}$. Then $R_{12} \cap R_{34}$ contains no vertical intervals. Hence $R_{12} \cap R_{34}$ is a horizontal bunch. Since $R_{12}$ is disconnecting, there is exactly one connected component $C$ of $(\partial \Sigma_0) \times I$ such that $R_{12} \cap C \ne \emptyset$. We apply Lemma \ref{horiz_bunches} to $R_1=R_{12}$ and $R_2=R_{34}$. Let $j,k$ be as given in Lemma \ref{horiz_bunches} (1). There are two sub-cases to consider:
\begin{enumerate}[(a)]
\item $1<j,k<n+1$, or
\item either $j=k=1$ or $j=k=n+1$.
\end{enumerate}
Each of the sub-cases has sub-sub-cases.  We argue that each of the sub-sub-cases leads to a contradiction.
\newline
\newline
\underline{Sub-case (iv.a):} In this case we must have that the number of horizontal intervals $n$ is at least $2$. Let $D_1$ and $D_2$ be as given in Lemma \ref{horiz_bunches} (3). Let $B_{12}$ be the 3-ball from Lemma \ref{horiz_bunches} (3.a).  There are five non-trivial sub-sub-cases:
\begin{enumerate}[(1)]
\item $B_{12} \cap \text{im}(\tau_0)=\emptyset$,
\item $|(r_1^j \cup r_2^k) \cap \text{im}(\tau_0)|=2$,
\item $|(r_1^j \cup r_2^k) \cap \text{im}(\tau_0)|=1$ and $|(D_1 \cup D_2) \cap \text{im}(\tau_0)|=1$.
\item $|(D_1 \cup D_2) \cap \text{im}(\tau_0)|=2$ and $(r_1^j \cup r_2^k) \cap \text{im}(\tau_0)=\emptyset$, or
\item $|(D_1 \cup D_2 \cup r_1^j \cup r_2^k) \cap  \text{im}(\tau_0)|=4$
\end{enumerate}
We will show that each of the sub-sub-cases leads to a contradiction.
\newline
\newline
\underline{Sub-sub-case (iv.a.1):} Suppose that $B_{12} \cap \text{im}(\tau_0)=\emptyset$. Let $R_{1}'$ be the vertically proper embedded disc given in Lemma \ref{horiz_bunches} (3.b). Since $R_1'$ and $R_1=R_{12}$ are ambient isotopic and $R_1' \cap R_2$, has fewer connected components than $R_1 \cap R_2$, we have a contradiction of the choice of $R_{12}$ and $R_{34}$.   
\newline
\newline
\underline{Sub-sub-case (iv.a.2):} Suppose that $|(r_1^j \cup r_2^k) \cap \text{im}(\tau_0)|=2$. Then we must have that $|r_1^j \cap \text{im}(\tau_0)|=1$ and $|r_2^k \cap \text{im}(\tau_0)|=1$. It follows as in Case (i) that since $K_1$ and $K_2$ are linearly prime, then $B_{12} \cap \text{im}(\tau_0)$ is an unknotted arc. If we take $R_1'$ as in Lemma \ref{horiz_bunches} (3.b), we have a vertically proper embedded surface which is ambient isotopic to $R_1=R_{12}$ and satisfies the property that $R_1' \cap R_{34}$ has fewer connected components than $R_{12} \cap R_{34}$. This contradicts the choice of $R_{12}$ and $R_{34}$. 
\newline
\newline
\underline{Sub-sub-case (iv.a.3):} Suppose that $|(r_1^j \cup r_2^k) \cap \text{im}(\tau_0)|=1$ and $|(D_1 \cup D_2) \cap \text{im}(\tau_0)|=1$. By symmetry, it is sufficient to consider only the case that $|(r_1^j \cap \text{im}(\tau_0)|=1$ and $|D_1 \cap \text{im}(\tau_0)|=1$. Now, either $\tau_0(0)$ or $\tau_0(1)$ is in $D_1$. Also, since $\tau_1'(1)=\tau_2'(0)$, we must have that either $\text{im}(\tau_1') \subset B_{12}$ or $\text{im}(\tau_2') \subset B_{12}$. Applying the bead-on-a-string isotopy, we conclude that either $(\Sigma_1',\tau_1')$ or $(\Sigma_2',\tau_2')$ stabilizes to a long classical knot. This contradicts the hypotheses on $K_1$ and $K_2$. 
\newline
\newline
\underline{Sub-sub-case (iv.a.4):} Suppose that $|(D_1 \cup D_2) \cap \text{im}(\tau_0)|=2$ and $(r_1^j \cup r_2^k) \cap \text{im}(\tau_0)=\emptyset$. Since $R_{12}$ is disconnecting, we cannot have that both $\tau_0(0)$ and $\tau_0(1)$ are in the same disc $D_1$ or $D_2$. Say that $\tau_0(0)\in D_1$ and $\tau_0(1) \in D_2$. However, this means that $\tau_0(0)$ and $\tau_0(1)$ are in the same connected component of $\Sigma_0 \times I$ after cutting along $R_{12}$. This contradicts the fact that $R_{12}$ is a decomposition surface.
\newline
\newline
\underline{Sub-sub-case (iv.a.5):} Suppose that $|(D_1 \cup D_2 \cup r_1^j \cup r_2^k) \cap  \text{im}(\tau_0)|=4$. Then each of $D_1$, $D_2$, $r_1^j$, and $r_2^k$ has exactly one intersection with $\tau_0$. Since $R_{12}$ is disconnecting, cutting $\Sigma_0 \times I$ along $R_{12}$ gives two components $A_1$ and $A_2$.  One of the components contains $\tau_0(1)$ and one contains $\tau_0(0)$.  However, this contradicts the fact that $B_{12}$ is contained in exactly one of the components $A_1$ and $A_2$.
\newline
\newline
\underline{Sub-case (iv.b):} Now suppose that $j=k=1$ or $j=k=n+1$. Let $D_1$, $D_2$, and $D_3$ be as given in Lemma \ref{horiz_bunches} (2). Let $B_{12}$ be the 3-ball from Lemma \ref{horiz_bunches} (2.b). As in Sub-case (iv.a). There are five non-trivial sub-sub-cases to consider.
\begin{enumerate}[(1)]
\item $B_{12} \cap \text{im}(\tau_0)=\emptyset$,
\item $|(r_1^j \cup r_2^k) \cap \text{im}(\tau_0)|=2$,
\item $|(r_1^j \cup r_2^k) \cap \text{im}(\tau_0)|=1$ and $|(D_1 \cup D_2) \cap \text{im}(\tau_0)|=1$.
\item $|(D_1 \cup D_2) \cap \text{im}(\tau_0)|=2$ and $(r_1^j \cup r_2^k) \cap \text{im}(\tau_0)=\emptyset$, or
\item $|(D_1 \cup D_2 \cup r_1^j \cup r_2^k) \cap  \text{im}(\tau_0)|=4$
\end{enumerate}
These sub-sub-cases follow in a similar manner to sub-sub-cases (iv.a.1)-(iv.a.5).
\newline
\newline
Thus, $R_{12} \cap R_{34}$ can contain no connected components which are horizontal intervals.
\newline
\newline
Thus, all connected components of $R_{12} \cap R_{34}$ are vertical intervals.  By the remarks at the beginning of the proof, this completes our task.
\end{proof}

\bibliographystyle{plain}
\bibliography{bib_comm}

\end{document}